\definecolor{darkblue}{rgb}{0.0,0.0,0.3}
\theoremstyle{plain}
\newtheorem{thm}{Theorem}[section]
\newtheorem{cor}[thm]{Corollary}
\newtheorem{prop}[thm]{Proposition}
\newtheorem{lem}[thm]{Lemma}
\theoremstyle{definition}
\newtheorem{defn}[thm]{Definition}
\newtheorem{example}[thm]{Example}
\newtheorem{rem}[thm]{Remark}
\newcommand{\bC}{{\mathbb{C}}}
\newcommand{\bF}{{\mathbb{F}}}
\newcommand{\bN}{{\mathbb{N}}}
\newcommand{\bR}{{\mathbb{R}}}
\newcommand{\A}{{\mathcal{A}}}
\newcommand{\B}{{\mathcal{B}}}
\newcommand{\cM}{{\mathcal{M}}}
\renewcommand{\P}{{\mathcal{P}}}
\newcommand{\U}{{\mathcal{U}}}
\newcommand{\rA}{{\mathrm{A}}}
\newcommand{\rB}{{\mathrm{B}}}
\newcommand{\rC}{{\mathrm{C}}}
\newcommand{\rL}{{\mathrm{L}}}
\newcommand{\rP}{{\mathrm{P}}}
\newcommand{\bP}{{\mathbf{P}}}
\newcommand{\bX}{{\mathbf{X}}}
\renewcommand{\phi}{\varphi}
\newcommand{\ca}{\mathrm{C}^*}
\newcommand{\id}{\operatorname{id}}
\newcommand{\ol}{\overline}
\newcommand{\ran}{\operatorname{ran}}
\newcommand{\spn}{\operatorname{span}}
\newcommand{\qfor}{\quad\text{for}\quad}
\DeclareMathOperator{\cbmaps}{\text{CB}}
\DeclareMathOperator{\ncpmaps}{CP_{nor}}
\DeclareMathOperator{\ucpmaps}{\text{UCP}}
\DeclareMathOperator{\bor}{Bor}
\DeclareMathOperator{\Irr}{Irr}
\DeclareMathOperator{\Irrep}{Irrep}
\DeclareMathOperator{\Rep}{Rep}
\newcommand{\cmin}{\mathrm{C}_{\textup{min}}^*}
\newcommand{\cmax}{\mathrm{C}_{\textup{max}}^*}
\newcommand{\tmin}{\otimes_{\operatorname{min}}}
\newcommand{\tcomm}{\otimes_{\operatorname{c}}}
\newcommand{\tmax}{\otimes_{\operatorname{max}}}
\DeclareMathOperator*{\maxt}{\otimes_{\mathrm{max}}}
\begin{document}
\title{Noncommutative Choquet simplices}

\author[M. Kennedy]{Matthew Kennedy}
\address{Department of Pure Mathematics\\ University of Waterloo\\
Waterloo, Ontario \; N2L 3G1 \\ Canada}
\email{matt.kennedy@uwaterloo.ca}

\author[E. Shamovich]{Eli Shamovich}
\address{Department of Mathematics\\ Ben-Gurion University of the Negev\\
Beer-Sheva \; 8410501 \\ Israel} 
\email{shamovic@bgu.ac.il}

\begin{abstract}
We introduce a notion of noncommutative Choquet simplex, or briefly an nc simplex, that generalizes the classical notion of a simplex. While every simplex is an nc simplex, there are many more nc simplices. They arise naturally from C*-algebras and in noncommutative dynamics. We characterize nc simplices in terms of their geometry and in terms of structural properties of their corresponding operator systems.

There is a natural definition of nc Bauer simplex that generalizes the classical definition of a Bauer simplex. We show that a compact nc convex set is an nc Bauer simplex if and only if it is affinely homeomorphic to the nc state space of a unital C*-algebra, generalizing a classical result of Bauer for unital commutative C*-algebras.

We obtain several applications to noncommutative dynamics. We show that the set of nc states of a C*-algebra that are invariant with respect to the action of a discrete group is an nc simplex. From this, we obtain a noncommutative ergodic decomposition theorem with uniqueness.

Finally, we establish a new characterization of discrete groups with Kazhdan's property (T) that extends a result of Glasner and Weiss. Specifically, we show that a discrete group has property (T) if and only if for every action of the group on a unital C*-algebra, the set of invariant states is affinely homeomorphic to the state space of a unital C*-algebra.
\end{abstract}

\subjclass[2010]{Primary 46A55, 46L07, 47A20; Secondary 46L52, 47L25}
\keywords{noncommutative convexity, noncommutative Choquet simplex, C*-algebra, C*-dynamical system, dynamical system, ergodic theory, Kazhdan's property (T)}
\thanks{First author supported by NSERC Grant Number 50503-10787.}
\maketitle

\setcounter{tocdepth}{1}
\tableofcontents

\section{Introduction}

A Choquet simplex, or more briefly a simplex, is a special kind of convex set that plays an important role in the theory of convexity. The key property of a simplex is that every point can be uniquely represented as the barycenter of a probability measure supported on the extreme boundary of the simplex. This fact underlies many important decomposition results in classical analysis.

An important example of a simplex is the set of tracial states of a unital C*-algebra. If the C*-algebra is commutative, then this simplex is the entire state space, and the extreme points are precisely the characters. A classical result of Bauer (see e.g. \cite{Alf1971}*{Theorem II.4.3}) asserts that a compact convex set is affinely homeomorphic to the space of probability measures on a compact space, and hence to the state space of a unital commutative C*-algebra, if and only if it is a Bauer simplex, meaning that it is a simplex with closed extreme boundary.

Another important example of a simplex due to Feldman \cite{Fel1963} is the set of probability measures invariant with respect to the action of a locally compact group on a compact space. The extreme points of this simplex are the ergodic measures, and so this fact, combined with the Choquet-Bishop-de Leeuw integral representation theorem, implies the ergodic decomposition theorem, i.e. that every invariant probability measure can be uniquely represented as the barycenter of a probability measure supported on the ergodic measures (see e.g. \cite{Phe2001}*{Chapter 12}).

Glasner and Weiss \cite{GW1997} obtained a remarkable dynamical characterization of groups with Kazhdan's property (T) that refines Feldman's result. Specifically, they proved that a locally compact group has property (T) if and only if every simplex arising in this way from an action of the group on a compact space is a Bauer simplex.

Recently, Davidson and the first author \cite{DK2019} introduced a theory of noncommutative convexity, along with a corresponding noncommutative Choquet theory, for the study of ``higher-order'' convex structure that occurs in noncommutative mathematics. This theory has its roots in Arveson's boundary theory for operator algebras \cites{Arv1969,Arv2008,DK2015}, where examples of higher-order convex structure are explicitly identified for the first time, and in Wittstock's theory of matrix convexity \cites{Wit1981,EW1997,WW1999}. 

In this paper, we will work within the framework of noncommutative convexity and introduce a special kind of noncommutative convex set that we call a noncommutative Choquet simplex, or more briefly an nc simplex, since it generalizes the classical notion of a simplex. We will demonstrate the utility of this notion by establishing noncommutative generalizations of the classical results mentioned at the beginning of this introduction.

The central objects of interest in noncommutative convexity are the nc convex sets. A fundamental result is the duality theorem from \cite{DK2019}*{Section 3}, which asserts that every compact nc convex set is affinely homeomorphic to the nc state space of an operator system, which is a closed unital self-adjoint subspace of a C*-algebra. Many interesting examples arise in this way. However, interesting examples of compact nc convex sets have recently been found to arise geometrically from the theory of linear matrix inequalities (see e.g. \cites{HM2012,HMV2006}) and the theory of noncommutative real algebraic geometry (see e.g. \cites{DDSS2017, HKM2013, HKMS2019}).

An nc convex set over an operator space $E$ is a graded set $K = \coprod K_n$, where each graded component $K_n$ consists of $n \times n$ matrices over $E$, and the graded components are related by requiring that $K$ is closed under taking direct sums and compressions by isometries. The union is taken over all $n \leq \kappa$ for a sufficiently large infinite cardinal number $\kappa$. In the separable setting, it typically suffices to take $\kappa = \aleph_0$.

If $E$ is a dual operator space, then there is a natural topology on each graded component $K_n$, and $K$ is an nc convex set if and only if it is closed under nc convex combinations, meaning that $\sum \alpha_i^* x_i \alpha_i \in K_n$ for every bounded family $\{x_i \in K_{n_i}\}$ and every family $\{\alpha_i \in \cM_{n_i,n}\}$, such that $\sum \alpha_i^* \alpha_i = 1_n$. Here, $\cM_{n_i,n}$ denotes the set of $n_i \times n$ scalar matrices with uniformly bounded finite submatrices. In this case, $K$ is said to be compact if each $K_n$ is compact.

The duality between compact nc convex sets and operator systems is implemented by the functor that maps a compact nc convex set $K$ to the operator system $\rA(K)$ of continuous affine nc functions on $K$. The inverse functor maps an operator system $S$ to its nc state space $K = \coprod K_n$, where $K_n$ is the set of unital complete order homomorphisms from $S$ into the space $\cM_n$ of $n \times n$ scalar matrices with uniformly bounded finite submatrices.

A large portion of \cite{DK2019} relates to the development of a noncommutative Choquet theory that generalizes much of classical Choquet theory. A key idea is the notion of a representing map for a point in a compact nc convex set, which is a noncommutative analogue of a representing probability measure for a point in a compact convex set. There is a partial order on the family of representing maps called the nc Choquet order, which is a noncommutative analogue of the Choquet order on the family of representing probability measures. A compact convex set is said to be a simplex if there is a unique representing probability measure that is maximal in the Choquet order. Motivated by this definition, a compact nc convex set is said to be an nc simplex if every point in the set has a unique representing map that is maximal in the nc Choquet order.

Every simplex is an nc simplex, but we will see that there are many more nc simplices than there are simplices. In addition to establishing their basic structural properties, we will give several characterizations of nc simplices in terms of the structure of the corresponding operator system of continuous affine nc functions.

As mentioned above, we will apply our results to obtain noncommutative generalizations of the classical results mentioned at the beginning of this introduction. In each case, we will see that it is essential to consider the noncommutative convex structure of the objects under consideration, since considering only the convex structure leads to unsatisfactory results. 

For example, in contrast to Bauer's characterization of state spaces of unital commutative C*-algebras, the state space of a unital noncommutative C*-algebra is never a simplex. While a characterization of state spaces of arbitrary C*-algebras was obtained in deep work of Alfsen and Shultz \cite{AS2003}, their result has a much different flavour than Bauer's.

It turns out that there is a natural definition of an nc Bauer simplex that generalizes the classical definition of a Bauer simplex. By working with the nc state space of a C*-algebra instead of the state space, we obtain a very natural noncommutative generalization of Bauer's result.

\begin{thm}
Let $K$ be a compact nc convex set. Then $K$ is affinely homeomorphic to the nc state space of a unital C*-algebra if and only if $K$ is an nc Bauer simplex.
\end{thm}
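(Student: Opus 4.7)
The plan is to use the duality theorem from \cite{DK2019} between compact nc convex sets and operator systems, which identifies $K$ affinely homeomorphically with $S_{nc}(\rA(K))$. Since the nc state space of a unital C*-algebra $A$ recovers $A$ as its operator system of affine nc functions, the theorem reduces to showing that $\rA(K)$ is completely order isomorphic to a unital C*-algebra if and only if $K$ is an nc Bauer simplex. I will prove each direction separately.

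For the forward direction, suppose $K = S_{nc}(A)$ for a unital C*-algebra $A$. A point $\phi \in K_n$ is a UCP map $\phi \colon A \to \cM_n$, and its representing maps are in bijection with UCP dilations of $\phi$ (sitting inside the continuous nc functions on $K$). By Stinespring's theorem, $\phi$ admits a minimal dilation to a $*$-representation of $A$, unique up to unitary equivalence; under the bijection, this corresponds to the unique maximal element in the nc Choquet order, so $K$ is an nc simplex. For the Bauer condition, the relevant nc extreme boundary consists of the $*$-representations inside $K$, and this is closed because the multiplicativity condition $\phi(ab) = \phi(a)\phi(b)$ persists under the relevant limits in the nc convex topology.

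For the reverse direction, let $K$ be an nc Bauer simplex and embed $\rA(K)$ into its C*-envelope $B = C^*_e(\rA(K))$; the goal is to show $\rA(K) = B$. The nc simplex property should translate, via the dilation--representing map correspondence, into a unique UCP extension property from $\rA(K)$ to $B$ for every nc state of $\rA(K)$. The closed extreme boundary then supplies the uniformity needed to promote this pointwise uniqueness to the algebraic conclusion that $\rA(K)$ is closed under the multiplication of $B$. To execute this rigorously I expect to invoke the structural characterization of nc simplices in terms of $\rA(K)$ established earlier in the paper (per the abstract's promise to characterize nc simplices in terms of structural properties of their operator systems), which should reformulate the nc simplex hypothesis as a rigidity statement about unital completely positive maps out of $\rA(K)$.

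The main obstacle is the reverse direction: converting the pointwise Choquet-theoretic hypothesis into the global algebraic structure of a C*-algebra. The nc simplex condition alone is a fibrewise uniqueness statement, and by itself is not enough to produce an associative multiplication on $\rA(K)$ (indeed, nc simplices exist that are not nc state spaces of C*-algebras). The closed extreme boundary hypothesis is the essential new ingredient that, by analogy with Bauer's classical theorem identifying $A(K) \cong \rC(\partial K)$ for a Bauer simplex, should allow the pointwise uniqueness to be upgraded to continuity across the boundary and thereby yield the C*-algebra multiplication.
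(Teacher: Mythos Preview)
Your forward direction has a gap in how you verify the Bauer condition. The definition of nc Bauer simplex requires $\partial K$ to be closed in the \emph{spectral topology} on $\Irr(K)$, which is the pullback of the hull--kernel topology on the spectrum of $\rC(K)$, not the point-weak* topology. Your argument that ``multiplicativity persists under limits'' is aimed at weak* closure, and even there it fails: if $\phi_i \to \phi$ point-weak*, then $\phi_i(a)\phi_i(b)$ need not converge weak* to $\phi(a)\phi(b)$. The paper instead uses the characterization (Proposition~\ref{prop:closure-ext-bdy-spec-top} and Theorem~\ref{thm:characterization-nc-bauer-simplices-reps}) that $\partial K$ is spectrally closed iff every irreducible point whose representation factors through $\cmin(\rA(K))$ already lies in $\partial K$; when $\rA(K)=A$ this is immediate since $\cmin(\rA(K))=A$ and $\partial K$ is exactly the set of irreducible representations of $A$.

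Your reverse direction is not a proof but an outline with the hard step left open, and the route you sketch --- showing $\rA(K)$ is multiplicatively closed inside $\cmin(\rA(K))$ via a rigidity/unique-extension argument --- is not how the paper proceeds and is missing the key idea. The paper's argument passes through the bidual: the nc simplex hypothesis yields a canonical ucp map $\phi:\rC(K)\to\rA(K)^{**}$ (Theorem~\ref{thm:characterization-nc-simplex-expectation}), and $\rA(K)^{**}$ acquires a von Neumann algebra structure (Theorem~\ref{thm:characterization-nc-simplex-c-star-system}). The substantial work is then Proposition~\ref{prop:nc-bauer-simplex-diagram}, which shows that under the Bauer hypothesis the quotient $\pi:\rC(K)\to\cmin(\rA(K))$ factors as $\iota^{**}\circ\phi$; this relies on the universal measurability result of Section~\ref{sec:universally-measurable} to conclude that $\rA(K)$ lies in the multiplicative domain. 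From this one deduces (Corollary~\ref{cor:nc-bauer-simplex-factors-maximal}) that an nc state factors through $\cmin(\rA(K))$ iff it is nc Choquet maximal, and then the map $x\mapsto\ol{\mu_x}$ gives an affine bijection from $K$ to the nc state space of $\cmin(\rA(K))$ whose inverse is restriction. You never engage with $\rA(K)^{**}$ or with the measurability argument, and without them there is no mechanism in your sketch to produce a multiplication on $\rA(K)$.
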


Feldman's result that the set of probability measures invariant with respect to the action of a group on a compact space is a simplex is a key result in ergodic theory. It is equivalent to the assertion that the set of states invariant with respect to the action of a group on a unital commutative C*-algebra is a simplex.

In contrast, the set of states invariant with respect to the action of a group on a unital noncommutative C*-algebra is generally not a simplex. As a result, there is no general ergodic decomposition-type theorem with uniqueness over the set of invariant states (see e.g. \cite{LR1967}). This is an issue that has received a great deal of attention in, for example, the C*-algebraic approach to statistical mechanics and quantum field theory (see e.g. \cite{BR1981}*{Chapter 1} and \cite{Rue1999}*{Chapter 6}).

By once again working with the nc state space of a C*-algebra instead of the state space, we obtain a very natural generalization of Feldman's result about invariant probability measures.

\begin{thm}
Let $\Gamma$ be a discrete group acting on a unital C*-algebra $A$ with nc state space $K$. Then the set $K^\Gamma$ of invariant nc states is an nc simplex.
\end{thm}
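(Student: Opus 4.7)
The strategy is to invoke a structural characterization of nc simplices from earlier in the paper and to realize $K^\Gamma$ as the nc state space of an object to which this characterization applies. The natural candidate is the normal nc state space of a W*-algebra arising from the fixed points of the extended $\Gamma$-action on the bidual $A^{**}$, since a characterization of this type (the noncommutative analogue of the fact that probability measure spaces underlie simplicial structure) is the most direct way to produce nc simplices.

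First, I would verify that $K^\Gamma$ is a compact nc convex set. The action $\gamma \colon A \to A$ induces an affine action on $K$ by $x \mapsto x \circ \gamma^{-1}$, which is jointly continuous at every matrix level and preserves nc convex combinations. Consequently $K^\Gamma$, being the intersection of the closed fixed sets of these continuous nc affine homeomorphisms, is closed and nc convex, hence compact nc convex. Next, I would extend the $\Gamma$-action to a normal action on the enveloping W*-algebra $A^{**}$. Since nc states on $A$ correspond bijectively to normal nc states on $A^{**}$, the set $K^\Gamma$ is affinely nc-homeomorphic to the space of $\Gamma$-invariant normal nc states on $A^{**}$. Letting $M \subseteq A^{**}$ be the W*-subalgebra of $\Gamma$-fixed elements, restriction gives an nc affine map from $K^\Gamma$ into the normal nc state space of $M$, and I would aim to show this is an nc affine homeomorphism. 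Once this is in place, the result follows from invoking the presumed earlier fact that the normal nc state space of any W*-algebra is an nc simplex.

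The main obstacle will be verifying surjectivity of the restriction $K^\Gamma \to \textup{normal nc states on } M$, equivalently, extending a normal nc state on $M$ to a $\Gamma$-invariant normal nc state on $A^{**}$. A $\Gamma$-equivariant normal conditional expectation $A^{**} \to M$ would immediately provide such extensions, but such a conditional expectation need not exist when $\Gamma$ is non-amenable, so naive averaging is unavailable. A natural route around this obstruction is to replace $A^{**}$ by the $\Gamma$-injective envelope of $A$, which by construction admits a canonical $\Gamma$-equivariant conditional expectation onto its fixed-point W*-subalgebra, and to check that every $\Gamma$-invariant nc state on $A$ factors uniquely through this envelope.

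An alternative, more direct approach is to model the classical ergodic decomposition proof of Feldman's theorem. In this route, for each invariant nc state $x \in K^\Gamma$ one would construct a maximal representing map in the nc Choquet order via the center of the $\Gamma$-action on an associated GNS construction for $x$, defining "ergodic" invariant nc states as those with trivial $\Gamma$-invariant center. Uniqueness would then rest on a noncommutative analogue of mutual singularity of distinct ergodic measures, realized here as disjointness of the corresponding GNS representations. Either way, the hard part is not the formal manipulations but producing, in the absence of a conditional expectation, a canonical object that witnesses the nc simplex structure.
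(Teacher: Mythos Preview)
Your proposal has a genuine gap, and the paper proceeds along a quite different line.

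Your primary route is to identify $K^\Gamma$ with the normal nc state space of $M=(A^{**})^\Gamma$ and then invoke the C*-system characterization. You already see that surjectivity of the restriction map fails without a $\Gamma$-equivariant normal conditional expectation $A^{**}\to M$, and that no such expectation exists for non-amenable $\Gamma$. But injectivity is equally problematic: there is no mechanism, absent an averaging procedure, forcing a $\Gamma$-invariant normal state on $A^{**}$ to be determined by its restriction to $M$. So the identification $K^\Gamma\cong\{\text{normal nc states on }M\}$ is not available in general, and neither of your suggested repairs is carried far enough to close the gap. The $\Gamma$-injective envelope suggestion would require knowing that every invariant nc state on $A$ factors \emph{uniquely} through that envelope and that the envelope carries a canonical expectation onto its fixed points; neither fact is supplied. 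The second alternative (``model Feldman's proof'') is a gesture in the right direction but not a proof.

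The paper does not go through the C*-system characterization at all here. It argues directly from the definition of nc simplex: for $x\in K^\Gamma$, one must show the maximal representing map on $\rC(K^\Gamma)$ is unique. The paper uses that $K$ itself is already an nc simplex (since $A$ is a C*-algebra), so $x$ has a \emph{unique} nc Choquet maximal representing map $\mu_x$ on $\rC(K)$, and this $\mu_x$ is automatically $\Gamma$-invariant. Taking a minimal representation $(y,\alpha)$ of $\mu_x$, a Segal-type lemma produces a unitary representation $\rho$ of $\Gamma$ on the dilation space making $(\delta_y,\rho)$ covariant and fixing $\operatorname{ran}\alpha$. If $\xi$ is the isometry onto the $\rho$-invariant vectors, then $\xi^* y\,\xi\in K^\Gamma$, and the paper shows that any nc Choquet maximal representing map for $x$ on $\rC(K^\Gamma)$ must equal $(\xi^*\alpha)^*\,\delta_{\xi^* y\,\xi}\,(\xi^*\alpha)$. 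Uniqueness on $\rC(K^\Gamma)$ thus follows from uniqueness on $\rC(K)$ together with this explicit compression-to-invariant-vectors formula. This is exactly the noncommutative analogue of the classical GNS/ergodic argument you allude to in your second alternative, but the key point you are missing is that one first exploits the nc simplex structure of $K$ to pin down $\mu_x$, and only then passes to the invariant-vector compression.
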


An invariant state is said to be ergodic if it is an extreme point of the set of invariant nc states. As a consequence of the above theorem and the noncommutative Choquet-Bishop-de Leeuw theorem from \cite{DK2019}*{Theorem 9.2.3}, we obtain a noncommutative ergodic decomposition theorem with uniqueness.

\begin{thm}[Noncommutative ergodic decomposition] \strut \\
Let $\Gamma$ be a discrete group acting on a C*-algebra $A$ with nc state space $K$. For an invariant nc state $\mu \in K^\Gamma$, there is a unique nc state $\nu:\rC(K^\Gamma) \to \cM_n$ that is maximal in the nc Choquet order and represents $\mu$. Moreover, $\nu$ is supported on the set $\partial (K^\Gamma)$ of ergodic nc states.
\end{thm}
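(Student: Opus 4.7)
The plan is that this theorem should follow essentially formally by combining the preceding theorem (that $K^\Gamma$ is an nc simplex) with the noncommutative Choquet--Bishop--de Leeuw theorem from \cite{DK2019}*{Theorem 9.2.3}, together with the definition of an nc simplex. So the bulk of the argument has already been carried out in the proof of the previous theorem.

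First, I would observe that $K^\Gamma$ is itself a compact nc convex set. Indeed, the $\Gamma$-action on $K$ is implemented level-wise by continuous affine nc maps, so the fixed-point set is closed under nc convex combinations and closed in each $K_n$. Hence it is meaningful to speak of representing maps on $\rC(K^\Gamma)$ for points of $K^\Gamma$ and of the nc Choquet order on such representing maps, and the results of noncommutative Choquet theory from \cite{DK2019} apply directly to $K^\Gamma$.

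Next, I would apply the noncommutative Choquet--Bishop--de Leeuw theorem to $K^\Gamma$: for every $\mu \in (K^\Gamma)_n$ there exists at least one nc state $\nu : \rC(K^\Gamma) \to \cM_n$ that represents $\mu$, is maximal in the nc Choquet order on representing maps of $\mu$, and is supported on the extreme boundary $\partial(K^\Gamma)$. Uniqueness of $\nu$ then follows immediately from the preceding theorem: since $K^\Gamma$ is an nc simplex, by the very definition of an nc simplex each point in $K^\Gamma$ has a unique representing map that is maximal in the nc Choquet order. By definition, the extreme points of $K^\Gamma$ are the ergodic invariant nc states, so the support statement translates into the claim of the theorem.

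The only conceivable obstacle would be a compatibility issue between the ambient nc Choquet theory (as developed for $K$) and its restriction to the nc convex subset $K^\Gamma$, but this is handled by the first paragraph: $K^\Gamma$ is itself a compact nc convex set, so the general machinery of \cite{DK2019} applies to it verbatim. The genuinely substantial content — that invariance forces the representing map to be unique — is precisely the content of the preceding theorem, and with that in hand the ergodic decomposition reduces to invoking existence from Choquet--Bishop--de Leeuw and uniqueness from the nc simplex property.
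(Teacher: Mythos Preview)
Your proposal is correct and follows essentially the same approach as the paper: existence and support on $\partial(K^\Gamma)$ come from the noncommutative Choquet--Bishop--de Leeuw theorem \cite{DK2019}*{Theorem 9.2.3} applied to the compact nc convex set $K^\Gamma$, and uniqueness follows from the preceding theorem that $K^\Gamma$ is an nc simplex. The paper's proof is in fact just these two sentences; your additional remarks about $K^\Gamma$ being a compact nc convex set and about compatibility are reasonable clarifications but not strictly needed.
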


In the separable case, as a consequence of the integral representation theorem from \cite{DK2019}*{Theorem 10.3.11}, we obtain a noncommutative integral decomposition theorem for invariant nc states in terms of ergodic nc states that more closely resembles the classical ergodic decomposition theorem. We refer to Section \ref{sec:nc-ergodic-theory} for the definition of integration against an admissible nc probability measure.

\begin{thm}
Let $\Gamma$ be a discrete group acting on a separable C*-algebra $A$ with nc state space $K$. For an invariant nc state $\mu \in K^\Gamma$ there is an admissible nc probability measure $\lambda$ on $K$ that represents $\mu$ and is supported on the set $\partial (K^\Gamma)$ of ergodic nc states, meaning that
\[
\mu(a) = \int_K a\ d\lambda, \qfor a \in \A.
\]
\end{thm}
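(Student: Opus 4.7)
The plan is to combine the preceding noncommutative ergodic decomposition theorem (which provides the unique maximal representing nc state for $\mu$ on $\rC(K^\Gamma)$) with the noncommutative integral representation theorem \cite{DK2019}*{Theorem 10.3.11}, which in the separable setting converts a maximal representing nc state into integration against an admissible nc probability measure. The overall strategy is: (i) produce a maximal representing nc state on $\rC(K^\Gamma)$, (ii) realize it as an integral, (iii) push it forward along the inclusion $K^\Gamma \hookrightarrow K$, and (iv) check that the resulting measure on $K$ represents $\mu$ and is concentrated on $\partial(K^\Gamma)$.

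First, I would apply the previous theorem to the invariant nc state $\mu \in K^\Gamma_n$ to obtain the unique nc state $\nu : \rC(K^\Gamma) \to \cM_n$ that is maximal in the nc Choquet order and represents $\mu$. Since $K^\Gamma$ is an nc simplex (established earlier in the section as a closed nc convex subset arising from the fixed points of the $\Gamma$-action), and since $A$ is separable, the operator system $\rA(K^\Gamma)$ of continuous affine nc functions is separable in the appropriate sense, so the separability hypothesis of \cite{DK2019}*{Theorem 10.3.11} is met. Applying that integral representation theorem to $\nu$ yields an admissible nc probability measure $\lambda_0$ on $K^\Gamma$ such that
\[
\nu(f) = \int_{K^\Gamma} f\, d\lambda_0, \qfor f \in \rC(K^\Gamma),
\]
and $\lambda_0$ is supported on $\partial(K^\Gamma)$ because $\nu$ is maximal.

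Next, I would transport $\lambda_0$ to an admissible nc probability measure $\lambda$ on $K$ by pushing forward along the inclusion $\iota : K^\Gamma \hookrightarrow K$, which is a continuous affine nc map between compact nc convex sets. The naturality of admissibility under continuous affine nc maps, together with the fact that the barycenter of $\lambda$ equals $\iota \circ (\text{barycenter of }\lambda_0)$, yields that $\lambda$ represents $\mu$ viewed as a point in $K$. For any $a \in A$, restriction to $K^\Gamma$ of the continuous affine nc function on $K$ induced by $a$ recovers
\[
\mu(a) = \nu(a|_{K^\Gamma}) = \int_{K^\Gamma} a|_{K^\Gamma}\, d\lambda_0 = \int_K a\, d\lambda,
\]
as required, and the support of $\lambda$ remains $\partial(K^\Gamma)$.

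The main obstacle I anticipate is verifying that the general integral representation theorem applies to $\rC(K^\Gamma)$ in a way compatible with integration of affine functions coming from $A$; one must check that $\rA(K^\Gamma)$ is separable (or that the relevant operator system is), which relies on the separability of $A$ together with the structure of $K^\Gamma$. A subtler point is that the admissibility and support properties must survive the pushforward along $\iota$; since $\iota$ is a homeomorphism onto its image and $\partial(K^\Gamma)$ is intrinsically defined, this should amount to an unwinding of definitions rather than a substantive difficulty. The uniqueness clause does \emph{not} persist after pushforward to $K$ — uniqueness is a statement about the maximal representing nc state on $\rC(K^\Gamma)$, not on $\rC(K)$ — which is why only existence is asserted in this theorem.
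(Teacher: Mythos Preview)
Your proposal is correct and follows essentially the same approach as the paper: the paper simply states that the result is obtained by combining the noncommutative ergodic decomposition theorem with the integral representation theorem from \cite{DK2019} (cited there as Theorem~10.3.10), and your argument unwinds exactly that combination. Your explicit treatment of the pushforward along $\iota:K^\Gamma\hookrightarrow K$ and of the separability of $\rA(K^\Gamma)$ (which follows since restriction $\rA(K)\to\rA(K^\Gamma)$ has dense image and $\rA(K)\cong A$ is separable) fills in details the paper leaves implicit.
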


Finally, the result of Glasner and Weiss characterizing groups with Kazhdan's property (T) is equivalent to the assertion that a group has property (T) if and only if for every action of the group on a unital commutative C*-algebra, the set of invariant states is affinely homeomorphic to the state space of a unital commutative C*-algebra. We obtain a noncommutative extension of this result for discrete groups.

\begin{thm}
A discrete group $\Gamma$ has Kazhdan's property (T) if and only if for every action of $\Gamma$ on a unital C*-algebra, the set of invariant states is affinely homeomorphic to the state space of a unital C*-algebra.
\end{thm}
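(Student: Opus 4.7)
The plan is to transport the statement through the nc Bauer theorem stated earlier in the paper and the classical Glasner--Weiss theorem, using the fact from the previous theorem that the set of invariant nc states is always an nc simplex. Fix an action of $\Gamma$ on a unital C*-algebra $A$ with nc state space $K$. By the nc Bauer theorem, $K^\Gamma$ is affinely homeomorphic to the nc state space of a unital C*-algebra if and only if $K^\Gamma$ is an nc Bauer simplex, i.e., the set of ergodic nc states is closed. Restricting any nc affine homeomorphism to the scalar level $n = 1$ gives a classical affine homeomorphism between the ordinary invariant state space of $A$ and the classical state space of the dual C*-algebra, which is the assertion visible in the present statement.

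For the forward direction, assume $\Gamma$ has property (T). I would show that $K^\Gamma$ is always an nc Bauer simplex by a noncommutative adaptation of Glasner--Weiss. Suppose a net of ergodic nc states $\phi_i$ converges to a non-ergodic $\phi \in K^\Gamma$. Failure of ergodicity of $\phi$ gives a nontrivial invariant decomposition of $\phi$, and via an nc GNS / Koopman-type construction this produces a unitary representation of $\Gamma$ with a nontrivial invariant subspace. Ergodicity of each approximant $\phi_i$, together with the convergence $\phi_i \to \phi$, should then yield almost invariant vectors in a representation with no nonzero invariant vectors, contradicting property (T). The nc Bauer theorem then produces a C*-algebra $B$ with $K^\Gamma$ affinely homeomorphic to the nc state space of $B$, and passage to the scalar level $n=1$ gives the conclusion. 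The hard part here is the nc Glasner--Weiss step itself: one must formulate a correct nc Koopman-type representation attached to an invariant nc state, and use the nc simplex structure of $K^\Gamma$, in particular the unique maximal representing nc map of each point, to convert failure of closedness of the ergodic boundary into a genuine spectral-gap contradiction with property (T).

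For the converse, apply the hypothesis to actions on $\rC(X)$ for a compact Hausdorff $\Gamma$-space $X$. By the Riesz representation theorem, the classical invariant states on $\rC(X)$ are precisely the $\Gamma$-invariant Borel probability measures on $X$, which by Feldman's theorem form a Choquet simplex. By hypothesis, this simplex is affinely homeomorphic to the state space of some unital C*-algebra $B$. Since the state space of a unital C*-algebra is a Choquet simplex if and only if the algebra is commutative, $B \cong \rC(Y)$ for some compact Hausdorff $Y$, and so the invariant measure simplex is a Bauer simplex. The classical Glasner--Weiss theorem for discrete groups then yields property (T).
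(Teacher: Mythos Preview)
Your outline is correct and follows the paper's route exactly: the forward direction is the nc Glasner--Weiss theorem (Theorem~\ref{thm:char-prop-T}) combined with the nc Bauer characterization (Theorem~\ref{thm:characterization-nc-bauer-simplex-c-star}) and restriction to the scalar level, and the converse is precisely the reduction to commutative $A=\rC(X)$ together with the fact that the state space of a noncommutative C*-algebra is never a simplex, followed by the classical Glasner--Weiss theorem.

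One technical point in your forward sketch deserves care. The nc Bauer condition is closedness of $\partial(K^\Gamma)$ in the \emph{spectral} topology on $\Irr(K^\Gamma)$, not in the point-weak* topology, so the picture ``a net of ergodic nc states converging to a non-ergodic one'' is not literally how the argument begins. The paper first invokes Proposition~\ref{prop:not_closed_bdry} to convert spectral non-closedness into point-strong convergence $\delta_{z_i}\to\delta_z$ with each $z_i$ \emph{maximal} in $K^\Gamma$ (a possibly infinite direct sum of ergodic points, not itself ergodic) and $z$ non-maximal; then Proposition~\ref{prop:criterion-extreme-invariant} identifies maximality in $K^\Gamma$ with the condition that the Stinespring isometry's range coincides with the space of $\rho$-invariant vectors, which is exactly what feeds the almost-invariant-vector contradiction with property~(T). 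Your description of the mechanism is right in spirit, but when you fill in the details this is where the nc simplex machinery actually enters.
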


\section*{Acknowledgements}

The first author is grateful to Martino Lupini for enlightening conversations about the classical Poulsen simplex and the Kirchberg\hyp{}Wassermann operator system.

\section{Classical Choquet simplices}

In this section, we will briefly review the definition of a classical Choquet simplex, or briefly a simplex, as well as the requisite background in Choquet theory. For references on Choquet theory and simplices, we direct the reader to the books \cite{Alf1971}, \cite{LMNS2010} and \cite{Phe2001}.

A simplex can be viewed as a higher-dimensional generalization of the notion of a triangle. The definition is motivated by the basic fact from convex geometry that a compact convex subset $C \subseteq \bR^2$ is a triangle if and only if every point $x \in C$ can be expressed uniquely as a (finite) convex combination of points in the extreme boundary $\partial C$ of $C$, i.e. the set of extreme points of $C$. Before recalling the definition of a simplex, we briefly recall the Choquet order and the integral representation theorem of Choquet-Bishop-de Leeuw.

Let $V$ be a locally convex vector space over the real or complex numbers and let $C \subseteq V$ be a compact convex subset. For probability measures $\mu$ and $\nu$ on $C$, $\nu$ is said to dominate $\mu$ in the Choquet order, written $\mu \prec \nu$, if $\mu(f) \leq \nu(f)$ for every convex function $f \in \rC(C)$. Here we have written $\mu(f)$ for the integral $\int_C f \, d\mu$ of $f$ against $\mu$. The Choquet order is a partial order on the space of probability measures on $C$. Heuristically, if $\mu \prec \nu$, then the support of $\mu$ is further than the support of $\nu$ to the extreme boundary $\partial C$. 

If $C$ is metrizable, then the extreme boundary $\partial C$ is Borel, and the maximality of $\mu$ in the Choquet order implies that $\mu$ is supported on $\partial C$ in the usual sense that $\mu(C \setminus \partial C) = 0$. On the other hand, if $C$ is non-metrizable, then $\partial C$ is not necessarily Borel. However, the maximality of $\mu$ in the Choquet order still implies that $\mu$ is supported on $\partial C$ in an appropriate sense. Specifically, it implies that $\mu(X) = 0$ for every Baire set $X \subseteq C \setminus \partial C$.

The integral representation theorem of Choquet-Bishop-de Leeuw asserts that every point $x \in C$ is the barycenter of a probability measure $\mu$ on $C$ that is maximal in the Choquet order. From above, this implies that $\mu$ is supported on the extreme boundary $\partial C$ in an appropriate sense.

\begin{defn}
A compact convex set $C$ is a {\em simplex}, if for every point $x \in C$ there is a unique probability measure $\mu$ on $C$ that is maximal in the Choquet order.
\end{defn}

If $C$ is metrizable, then a probability measure $\mu$ on $C$ is maximal in the Choquet order if and only if it is supported on the extreme boundary $\partial C$. However, if $C$ is non-metrizable, then this is no longer true. In fact, pathological examples exist of non-metrizable compact convex sets that should, by every right, be called simplices, with the property that there are multiple probability measures supported on the extreme boundary with the same barycenter (see e.g. \cite{Phe2001}*{Section 10}). For this reason, simplices are defined in terms of the Choquet order.

For a compact convex set $C$, let $\rA(C)$ denote the space of continuous affine functions on $C$. Then $\rA(C)$ is a function system, i.e. a unital self-adjoint subspace of the commutative C*-algebra $\rC(C)$. In fact, the functor mapping $C$ to $\rA(C)$ implements the dual equivalence between the category of compact convex sets with morphisms consisting of continuous affine maps and the category of function systems with morphisms consisting of unital order homomorphisms (see \cite{DK2019}*{Section 3.1}).

\section{Noncommutative convexity and noncommutative Choquet theory} \label{sec:nc-convexity}

\subsection{Noncommutative convex sets}

Throughout this paper, we will work within the framework of noncommutative convexity introduced in \cite{DK2019}. In this section, we briefly review some of the relevant material. For a reference on operator spaces and operator systems, we direct the reader to the books \cite{Pau2002} and \cite{Pis2003}. 

For an operator space $E$ and nonzero cardinal numbers $m$ and $n$, let $\cM_{m,n}(E)$ denote the operator space consisting of $m \times n$ matrices over $E$ with uniformly bounded submatrices.  For the special case when $E = \bC$, let $\cM_{m,n} = \cM_{m,n}(\bC)$. Let $\cM_n(E) = \cM_{n,n}(E)$ and $\cM_n = \cM_n(\bC)$. 

For each nonzero cardinal number $n$, we fix a Hilbert space $H_n$ of dimension $n$ and identify $\cM_{m,n}$ with the space $\B(H_n,H_m)$ of bounded operators from $H_n$ to $H_m$. 

Let $m,n,p$ be nonzero cardinal numbers. For $x \in \cM_n(E)$ and scalar matrices $\alpha \in \cM_{m,n}$ and $\beta \in \cM_{n,p}$, the product $\alpha x \beta \in \cM_{m,p}(E)$ can be defined as compositions under appropriate operator space embeddings. 

For a fixed (sufficiently large) nonzero cardinal number $\kappa$, let $\cM(E) = \coprod \cM_n(E)$, where the disjoint union is taken over all nonzero cardinal numbers $n \leq \kappa$. As above, for the special case $E = \bC$, let $\cM = \cM(\bC)$. The choice of $\kappa$ will depend on the setting. For example, if we are considering separable operator spaces, then it will typically suffice to take $\kappa = \aleph_0$. We will work with the understanding that $\kappa$ exists and simply write e.g. ``for all $n$'' instead of ``for all $n \leq \kappa$.''

If $E$ is a dual operator space with a distinguished predual $E_*$, then we identify $\cM_n(E)$ with the space $\cbmaps(E_*,\cM_n)$ of completely bounded maps from $E_*$ to $\cM_n$ under the natural operator space isomorphism. We equip $\cM_n(E)$ with the corresponding point-weak* topology. For $\cM_n$, this is the usual weak* topology.

The next definition is \cite{DK2019}*{Definition 2.2.1}.

\begin{defn}
An {\em nc convex set} over an operator space $E$ is a graded subset $K = \coprod K_n \subseteq \cM(E)$ that is closed under direct sums and compressions, meaning that
\begin{enumerate}
\item $\sum \alpha_i x_i \alpha_i^* \in K_n$ for every family $\{x_i \in K_{n_i}\}$ and every family of isometries $\{\alpha_i \in \cM_{n,n_i} \}$ satisfying $\sum \alpha_i \alpha_i^* = 1_n$,
\item $\beta^* x \beta \in K_m$ for every $x \in K_n$ and every isometry $\beta \in \cM_{n,m}$. 
\end{enumerate}
The set $K$ is said to be {\em compact} if $E$ is a dual operator space and each $K_n$ is compact in the topology on $\cM_n(E)$. 
\end{defn}

It is often convenient to work with the heuristic that a set is an nc convex set if it contains all noncommutative convex combinations of its elements. This can be made precise for nc convex sets over dual operator spaces. We require the following definition from \cite{DK2019}*{Definition 2.2.7}.

\begin{defn}
Let $K$ be an nc convex set. An {\em nc convex combination} of elements in $K$ is an expression of the form $\sum \alpha_i^* x_i \alpha_i$ for $n$, a bounded family $\{x_i \in K_{n_i}\}$ and a family $\{\alpha_i \in \cM_{n_i,n} \}$ satisfying $\sum \alpha_i^* \alpha_i = 1_n$.
\end{defn}

The next result is \cite{DK2019}*{Proposition 2.2.8}.

\begin{prop}
Let $E$ be a dual operator space. A subset $K \subseteq \cM(E)$ is an nc convex set if and only if it is closed under nc convex combinations.
\end{prop}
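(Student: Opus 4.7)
The plan is to prove both directions of the equivalence. The forward direction---that closure under nc convex combinations implies closure under direct sums and compressions---is almost immediate, since each axiom in the definition of an nc convex set is a special case of an nc convex combination. For direct sums, given $\{x_i \in K_{n_i}\}$ and isometries $\{\alpha_i \in \cM_{n,n_i}\}$ with $\sum \alpha_i \alpha_i^* = 1_n$, setting $\beta_i = \alpha_i^* \in \cM_{n_i,n}$ gives $\sum \beta_i^* \beta_i = \sum \alpha_i \alpha_i^* = 1_n$ and $\sum \alpha_i x_i \alpha_i^* = \sum \beta_i^* x_i \beta_i$, exhibiting the direct sum as an nc convex combination. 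For compressions, $\beta^* x \beta$ with $\beta \in \cM_{n,m}$ an isometry is an nc convex combination with a single summand.

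The reverse direction is the substantive part. Given $K$ closed under direct sums and compressions, I show every nc convex combination $y = \sum_i \alpha_i^* x_i \alpha_i$ (with $x_i \in K_{n_i}$, $\alpha_i \in \cM_{n_i,n}$, and $\sum_i \alpha_i^* \alpha_i = 1_n$) lies in $K_n$ by realizing $y$ as a compression of a direct sum. Set $N = \sum_i n_i$ and let $e_i \in \cM_{N,n_i}$ denote the canonical isometric embeddings of $H_{n_i}$ into $H_N = \bigoplus_i H_{n_i}$, so that $e_i^* e_j = \delta_{ij} 1_{n_i}$ and $\sum_i e_i e_i^* = 1_N$. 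The direct sum $x = \bigoplus_i x_i = \sum_i e_i x_i e_i^*$ then lies in $K_N$ by the direct sum axiom. Now define $\alpha = \sum_i e_i \alpha_i \in \cM_{N,n}$. The orthogonality relations $e_i^* e_j = \delta_{ij} 1_{n_i}$ immediately yield $\alpha^* \alpha = \sum_{i,j} \alpha_i^* (e_i^* e_j) \alpha_j = \sum_i \alpha_i^* \alpha_i = 1_n$, so $\alpha$ is an isometry, and a parallel computation gives $\alpha^* x \alpha = \sum_i \alpha_i^* x_i \alpha_i = y$. The compression axiom then places $y \in K_n$.

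The main technical obstacle is not algebraic but size-theoretic: one must verify that the possibly infinite direct sum $x = \bigoplus_i x_i$ exists as an element of $\cM_N(E)$ and indeed lies in $K_N$. Under the identification $\cM_N(E) \cong \cbmaps(E_*,\cM_N)$ with its point-weak* topology, the boundedness hypothesis on $\{x_i\}$ is precisely what ensures that the formal sum $\sum_i e_i x_i e_i^*$ converges to a completely bounded map; the direct sum axiom (stated for families of cardinality up to $\kappa$) then supplies $x \in K_N$ provided $\kappa$ was chosen large enough that $N \leq \kappa$. Once these points are dispensed with, the identity $\alpha^* x \alpha = y$ finishes the argument.
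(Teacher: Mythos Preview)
The paper itself does not prove this proposition; it is quoted from \cite{DK2019}*{Proposition 2.2.8} without argument, so there is no proof here to compare against. Your argument is correct and is the expected one: realize an arbitrary nc convex combination $\sum_i \alpha_i^* x_i \alpha_i$ as the compression $\alpha^*\bigl(\bigoplus_i x_i\bigr)\alpha$ of the direct sum by the isometry $\alpha = \sum_i e_i\alpha_i$, and for the converse observe that direct sums and compressions by isometries are themselves nc convex combinations.
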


\begin{example}
Let $S$ be an operator system. An {\em nc state} on $S$ is a unital completely positive map of the form $\phi : S \to \cM_n$ for some $n$. We let $\ucpmaps(S,\cM_n)$ denote the collection of all nc states with range in $\cM_n$. The {\em nc state space} of $S$ is the nc convex set $K = \coprod K_n$ defined by
\[
K_n = \ucpmaps(S,\cM_n).
\]
Identifying each $K_n = \ucpmaps(S,\cM_n)$ with a subspace of $\cM_n(S^*)$, we see that $K$ is an nc convex set over the dual operator space $S^*$. Moreover, each $K_n$ is compact in the corresponding point-weak* topology, so $K$ is compact. We will see shortly that every compact nc convex set arises in this way as the nc state space of an operator system.
\end{example}

We will require the notion of a dilation of a point in a compact nc convex set, as well as the corresponding notion of a maximal point. The next definition is \cite{DK2019}*{Definition 5.1.1}.

\begin{defn}
Let $K$ be a compact nc convex set. A point $x \in K_m$ is {\em dilated} by a point $y \in K_n$ if there is an isometry $\alpha \in \cM_{n,m}$ such that $x = \alpha^* y \alpha$. If $y \simeq x \oplus z$ for some $z \in K$, where the decomposition is taken with respect to $\alpha$, then the dilation is said to be trivial. The point $x$ is said to be {\em maximal} if it has no non-trivial dilations.
\end{defn}

It was shown in \cite{DK2019} that there are appropriate noncommutative analogues of classical results like the Krein-Milman theorem and Milman's partial converse to the Krein-Milman theorem for extreme points in a compact convex set. We will require the notion of an extreme point of a compact nc convex set. The next definition is \cite{DK2019}*{Definition 6.1.1}.

\begin{defn}
Let $K$ be a compact nc convex set. A point $x \in K_n$ is {\em extreme} if whenever $x$ is written as a finite nc convex combination $x = \sum \alpha_i^* x_i \alpha_i$ for $\{x_i \in K_{n_i} \}$ and $\{\alpha_i \in \cM_{n_i,n} \}$ satisfying $\sum \alpha_i^* \alpha_i = 1_n$, then each $\alpha_i^* x_i \alpha_i$ is a scalar multiple of $x$ and each $x_i$ decomposes with respect to the range of $\alpha_i$ as a direct sum $x_i = y_i \oplus z_i$ for $y_i,z_i \in K$ with $y_i$ unitarily equivalent to $x$. The {\em extreme boundary} $\partial K = \coprod (\partial K)_n$ of $K$ is the set of all extreme points.
\end{defn}

\subsection{Noncommutative functions}

The next definition is Definition \cite{DK2019}*{Definition 4.2.1}.

\begin{defn} \label{defn:nc_func}
Let $K$ be a compact nc convex set. A function $f~:~K \to \cM$ is an {\em nc function} if it is graded, respects direct sums and is unitarily equivariant, meaning that
\begin{enumerate}
\item $f(K_n) \subseteq K_n$ for all $n$,
\item $f(\sum \alpha_i x_i \alpha_i^*) = \sum \alpha_i f(x_i) \alpha_i^*$ for every family $\{x_i \in K_{n_i} \}$ and every family of isometries $\{\alpha_i \in \cM_{n_i,n}\}$ satisfying $\sum \alpha_i \alpha_i^* = 1_n$,
\item $f(\beta^* x \beta) = \beta f(x) \beta^*$ for every $x \in K_n$ and every unitary $\beta \in \cM_n$.
\end{enumerate}
The function $f$ is an {\em affine nc function} if it is an nc function that is, in addition, equivariant with respect to isometries, meaning that
\begin{enumerate}
\item[(3')] $f(\gamma^* x \gamma) = \gamma^* f(x) \gamma$ for every $x \in K_m$ and every isometry $\gamma \in \cM_{m,n}$.
\end{enumerate}
An nc function $f$ is {\em bounded} if $\|f\|_\infty < \infty$, where $\|f\|_\infty$ denotes the uniform norm
\[
\|f\|_\infty = \sup_{x \in K} \|f(x)\|.
\]
Finally, an nc function $f$ is {\em continuous} if for every $n$, the restriction $f|_{K_n} \colon K_n \to \cM_n$ is continuous.
\end{defn}

Let $\rA(K)$ denote the operator system of continuous affine nc functions on $K$, let $\rC(K)$ denote the space of continuous nc functions on $K$ and let $\rB(K)$ denote the space of bounded nc functions on $K$. Note that $\rA(K) \subseteq \rC(K) \subseteq \rB(K)$. For $f \in \rB(K)$ the adjoint $f^*$ is a bounded nc function defined by $f^*(x) = f(x)^*$ for $x \in K$. The spaces $\rC(K)$ and $\rB(K)$ are C*-algebras with respect to the adjoint, the pointwise product and the uniform norm.

It follows from \cite{DK2019}*{Theorem 4.4.3} and \cite{DK2019}*{Theorem 4.4.4} that $\rC(K)$ is generated as a C*-algebra by $\rA(K)$ and the bidual $\rC(K)^{**}$ is isomorphic to $\rB(K)$. Moreover, for every point $x \in K_n$, there is a unique *-homomorphism $\delta_x : \rC(K) \to \cM_n$ satisfying $\delta_x|_{\rA(K)} = x$.

\begin{defn} \label{defn:point-evaluation}
Let $K$ be a compact nc convex set. For a point $x \in K_n$, the corresponding {\em point evaluation} $\delta_x : \rC(K) \to \cM_n$ is the unique *-homomorphism satisfying $\delta_x|_{\rA(K)} = x$.
\end{defn}

\subsection{Categorical duality}

The noncommutative Kadison representation theorem from \cite{DK2019}*{Theorem 3.2.3} asserts that every operator system is unitally completely order isomorphic to the space of continuous affine nc functions on a compact nc convex set. More generally, the category of compact nc convex sets is dually equivalent to the category of closed operator systems. To make this precise, we require the following definition from \cite{DK2019}*{Definition 2.5.1}. 

\begin{defn}
Let $K$ and $L$ be compact nc convex sets. A map $\theta : K \to L$ is an {\em affine nc map} if it is graded, respects direct sums and is equivariant with respect to isometries, meaning that
\begin{enumerate}
\item $\theta(K_n) \subseteq L_n$ for all $n$,
\item $\theta(\sum \alpha_i x_i \alpha_i^*) = \sum \alpha_i^* \theta(x_i) \alpha_i$ for every family $\{x_i \in K_{n_i}\}$ and every family of isometries $\{\alpha_i \in \cM_{n_i,n}\}$ satisfying $\sum \alpha_i \alpha_i^* = 1_n$,
\item $\theta(\beta^* y \beta) = \beta^* \theta(y) \beta$ for every $y \in K_m$ and every isometry $\beta \in \cM_{m,n}$. 
\end{enumerate}
An affine nc map $\theta$ is {\em continuous} if the restriction $\theta|_{K_n}$ is continuous for every $n$. We say that $\theta$ is a {\em homeomorphism} and that $K$ and $L$ are {\em affinely homeomorphic} if $\theta$ has a continuous inverse.
\end{defn}

Specifically, let $\mathrm{NCConv}$ denote the category of compact nc convex sets with continuous affine nc maps as morphisms and let $\mathrm{OpSys}$ denote the category of norm closed operator systems with unital complete order homomorphisms as morphisms. There is a contravariant functor $\rA \colon \mathrm{NCConv} \to \mathrm{OpSys}$ sending a compact nc convex set $K$ to the operator system $\rA(K)$ of continuous affine nc functions on $K$. The inverse of the functor $\rA$ is the contravariant functor $\rA^{-1} : \mathrm{OpSys} \to \mathrm{NCConv}$ sends an operator system $S$ to the nc state space of $S$.

The next result is \cite{DK2019}*{Theorem 3.2.5}.

\begin{thm} \label{thm:nc_kadison}
The contravariant functors $\rA : \rA \colon \mathrm{NCConv} \to \mathrm{OpSys}$ and $\rA^{-1} : \mathrm{OpSys} \to \mathrm{NCConv}$ are inverses. In particular, the categories $\mathrm{NCConv}$ and $\mathrm{OpSys}$ are dually equivalent.
\end{thm}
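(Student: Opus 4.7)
The plan is to construct explicit natural transformations in both directions and show each is an isomorphism. For a compact nc convex set $K$, define $\eta_K : K \to \rA^{-1}(\rA(K))$ by $(\eta_K(x))(f) = f(x)$ for $x \in K_n$ and $f \in \rA(K)$. For an operator system $S$ with nc state space $L = \rA^{-1}(S)$, define $\epsilon_S : S \to \rA(L)$ by $(\epsilon_S(s))(\phi) = \phi(s)$ for $s \in S$ and $\phi \in L_n$. I would first check the easy direction: $\epsilon_S(s)$ is a continuous affine nc function on $L$ (gradedness, direct-sum compatibility, and isometric equivariance follow immediately from the corresponding properties of the UCP maps in $L$, and continuity follows because $L_n$ carries the point-weak* topology). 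Dually, $\eta_K(x)$ is unital and completely positive because $\rA(K)$ inherits its matrix-positivity structure so that $[f_{ij}] \ge 0$ in $\cM_m(\rA(K))$ means $[f_{ij}(y)] \ge 0$ in $\cM_{mn}$ for every $y \in K_n$, whence $\eta_K(x)^{(m)}([f_{ij}]) = [f_{ij}(x)] \ge 0$.

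Next I would verify that $\eta_K$ and $\epsilon_S$ are morphisms in their respective categories, which is routine from the definitions of affine nc maps and unital complete order homomorphisms. For the triangle identities and naturality, I would check that for any continuous affine nc map $\theta : K \to K'$, the diagram $\rA(\theta) \circ \epsilon_{\rA(K')} = \epsilon_{\rA(K)} \circ \rA(\theta)$ commutes by unwinding evaluations on both sides; the analogous identity for $\rA^{-1}$ and $\eta$ is similar. The bulk of the work then consists of showing that $\eta_K$ and $\epsilon_S$ are isomorphisms.

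For $\eta_K$, the key step is surjectivity: given a UCP map $\phi : \rA(K) \to \cM_n$, one must produce $x \in K_n$ with $f(x) = \phi(f)$ for every $f \in \rA(K)$. The idea is to use that $K$ sits inside $\cM_n(E)$ for the dual operator space $E$ predual to $\rA(K)$, so that $\phi$ can be viewed as an element of $\cM_n(E)$ via the identification of $\cM_n(E)$ with $\cbmaps(E_*, \cM_n)$; one then argues that this element lies in $K_n$ using an nc separation/Hahn-Banach theorem applied to the dual cone, together with closure of $K$ under nc convex combinations. Injectivity of $\eta_K$ amounts to the assertion that $\rA(K)$ separates points of $K$ at each matrix level, which follows from the same nc Hahn-Banach argument. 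Continuity of $\eta_K$ and its inverse follows from the defining point-weak* topologies on both sides. This surjectivity step is the main obstacle, since it is essentially a noncommutative analogue of the Kadison representation theorem and requires a genuinely noncommutative separation argument rather than a scalar one.

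For $\epsilon_S$, unitality is immediate and complete order embedding follows from the defining property of the nc state space: if $[s_{ij}] \in \cM_m(S)$ is not positive, then by (a matrix version of) the Hahn-Banach / Krein-extension theorem for operator systems there is a UCP map $\phi : S \to \cM_n$ for some $n$ with $\phi^{(m)}([s_{ij}]) = [\phi(s_{ij})] = [\epsilon_S(s_{ij})(\phi)]$ not positive, so $[\epsilon_S(s_{ij})]$ is not positive in $\cM_m(\rA(L))$. Surjectivity of $\epsilon_S$ then reduces to showing that every continuous affine nc function on $L$ is of the form $\phi \mapsto \phi(s)$ for some $s \in S$. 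Here the plan is to invoke the bijection $\eta_L$ established above: continuous affine nc functions on $L$ correspond, via the canonical pairing, to elements of the double dual of $S$ that land back in $S$ because they are bounded, affine, and continuous for the weak* topology dictated by $S$. Alternatively, one can run the argument through the functorial identification of $\rA$ with a left adjoint/right adjoint pair and deduce surjectivity from the triangle identities, once $\eta$ is known to be an isomorphism.
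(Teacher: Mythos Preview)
The paper does not supply a proof of this theorem: it is quoted verbatim as \cite{DK2019}*{Theorem~3.2.5}, with no argument given. So there is nothing in the present paper to compare your proposal against; the proof lives entirely in \cite{DK2019}.

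That said, your outline has the right architecture (construct the unit $\eta$ and counit $\epsilon$ and check they are isomorphisms), but there is a genuine gap in your surjectivity argument for $\eta_K$. You write that ``$K$ sits inside $\cM_n(E)$ for the dual operator space $E$ predual to $\rA(K)$.'' This presupposes exactly what is to be proved. By definition a compact nc convex set $K$ lives in $\cM(E)$ for \emph{some} dual operator space $E$ with predual $E_*$, and there is no a~priori identification of $E_*$ with $\rA(K)$; indeed, establishing that the evaluation map $K \to \rA^{-1}(\rA(K))$ is an affine nc homeomorphism is precisely the content of the noncommutative Kadison representation theorem you are invoking. The honest argument must work with the given $E$: one shows that restrictions of elements of $E_*$ to $K$ lie in $\rA(K)$, that these suffice to separate $K$ (so $\eta_K$ is injective), and then uses an nc bipolar/separation theorem of Effros--Winkler type applied to $K \subseteq \cM(E)$ to show that a UCP map $\phi : \rA(K) \to \cM_n$ that does not come from a point of $K_n$ can be separated from $K_n$ by a matrix-affine functional, contradicting complete positivity of $\phi$. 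Your sketch names the right tool but applies it to the wrong ambient space, which makes the step circular as written. The $\epsilon_S$ direction is in better shape, though the final surjectivity paragraph again leans on $\eta$ and so inherits the same issue.
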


The next result is \cite{DK2019}*{Corollary 3.2.6}.

\begin{cor}
Let $K$ and $L$ be compact nc convex sets. The operator systems $\rA(K)$ and $\rA(L)$ are isomorphic if and only if $K$ and $L$ are affinely homeomorphic.
\end{cor}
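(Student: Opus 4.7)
My plan is to deduce this as a direct, essentially formal consequence of the duality theorem (Theorem \ref{thm:nc_kadison}), since the statement is really just the object-level shadow of the functorial equivalence between $\mathrm{NCConv}$ and $\mathrm{OpSys}$.

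For the forward implication, I would start with an affine homeomorphism $\theta : K \to L$. Since $\rA$ is a contravariant functor from $\mathrm{NCConv}$ to $\mathrm{OpSys}$, it sends $\theta$ to a morphism $\rA(\theta) : \rA(L) \to \rA(K)$, explicitly given by precomposition $f \mapsto f \circ \theta$. Functoriality gives $\rA(\theta^{-1}) \circ \rA(\theta) = \rA(\theta \circ \theta^{-1}) = \rA(\id_L) = \id_{\rA(L)}$ and likewise on the other side, so $\rA(\theta)$ is a unital complete order isomorphism from $\rA(L)$ onto $\rA(K)$. This direction requires only that $\rA$ be a functor, not that it be an equivalence.

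For the reverse implication, I would start with a unital complete order isomorphism $\phi : \rA(K) \to \rA(L)$ and apply the inverse functor $\rA^{-1}$ to obtain an affine homeomorphism $\rA^{-1}(\phi) : \rA^{-1}(\rA(L)) \to \rA^{-1}(\rA(K))$ between the nc state spaces. The natural isomorphisms $K \cong \rA^{-1}(\rA(K))$ and $L \cong \rA^{-1}(\rA(L))$, which are supplied by the equivalence in Theorem \ref{thm:nc_kadison} (concretely, each $x \in K_n$ corresponds to the nc state $f \mapsto f(x)$ on $\rA(K)$), can then be composed with $\rA^{-1}(\phi)$ to produce the desired affine homeomorphism between $L$ and $K$.

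There is no real obstacle here; the content lies entirely in Theorem \ref{thm:nc_kadison}, and this corollary is simply its object-level restatement. The only thing to be careful about is to invoke the \emph{contravariance} of $\rA$ correctly (so that $\rA(\theta)$ goes from $\rA(L)$ to $\rA(K)$ and not the other way) and to cite the appropriate natural isomorphisms between $K$ and its double dual $\rA^{-1}(\rA(K))$ from the proof of the duality theorem, rather than asserting them without justification.
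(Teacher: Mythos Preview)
Your argument is correct and is exactly the standard categorical deduction from Theorem~\ref{thm:nc_kadison}: functors send isomorphisms to isomorphisms, and the natural isomorphisms $K \cong \rA^{-1}(\rA(K))$ furnished by the equivalence close the loop. The paper itself does not give a proof here but simply records the result as \cite{DK2019}*{Corollary 3.2.6}; your write-up supplies precisely the routine argument one would expect, with the only cosmetic slip being that you label the two implications ``forward'' and ``reverse'' in the opposite order from how the biconditional is stated.
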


\subsection{Convex noncommutative functions}

Let $K$ be a compact nc convex set. An element $f = (f_{ij}) \in \cM_n(\rB(K))$ can be viewed as an nc function $f : K \to \cM_n(\cM)$ defined by $f(x) = (f_{ij}(x))$ for $x \in K$. Note that $f$ is graded, respects direct sums and is unitarily equivariant in an appropriate sense. Note that $f$ is also bounded in an appropriate sense. We say that $f$ is {\em continuous} if $f \in \cM_n(\rC(K))$, and we say that $f$ is {\em self-adjoint} if $f(x) \in \cM_n(\cM_k)_{sa}$ for all $k$ and all $x \in K_k$. For an nc state $\mu : \rC(K) \to \cM_k$, $\mu(f) = (\mu(f_{ij}))$.

The next definition is inspired by the characterization of classical convex functions in terms of their epigraph. It is \cite{DK2019}*{Definition 7.2.1}. 

\begin{defn}
Let $K$ be a compact nc convex set and let $f \in \cM_n(\rB(K))$ be a bounded self-adjoint nc function. The {\em epigraph} of $f$ is the subset $\operatorname{Epi}(f) \subseteq \coprod_m K_m \times \cM_n(\cM_m)$ defined by
\[
\operatorname{Epi}_m(f) = \{(x,\alpha) \in K_m \times \cM_n(\cM_m) : x \in K_m \text{ and } \alpha \geq f(x) \}.
\]
A bounded self-adjoint nc function $f$ is {\em convex} if $\operatorname{Epi}(f)$ is an nc convex set.
\end{defn}

It was observed in \cite{DK2019}*{Remark 7.2.2} that a bounded self-adjoint nc function $f \in \cM_n(\rB(K))$ is convex if and only if
\[
f(\alpha^* x \alpha) \leq (1_n \otimes \alpha^*) f(x) (1_n \otimes \alpha)
\]
for every $l$ and $m$, every $x \in K_m$ and every isometry $\alpha \in \cM_{m,l}$.

\subsection{Representing maps and partial orders}
\begin{defn}
Let $K$ be a compact nc convex set and let $\mu : \rC(K) \to \cM_n$ be an nc state. The {\em barycenter} of $\mu$ is the unique point $x \in K_n$ satisfying $x = \mu|_{\rA(K)}$. The nc state $\mu$ is said to {\em represent} $x$.
\end{defn}

Every point $x \in K_n$ has at least one representing map, namely the point evaluation $\delta_x : \rC(K) \to \cM_n$ from Definition \ref{defn:point-evaluation}. The fact that there may be other nc states representing $x$ is a key part of the theory. The next result combines \cite{DK2019}*{Theorem 5.2.3} and \cite{DK2019}*{Theorem 6.1.6}.

\begin{thm}
Let $K$ be a compact nc convex set. A point $x \in K_n$ is maximal if and only if the corresponding point evaluation $\delta_x : \rC(K) \to \cM_n$ is the unique representing map for $x$. The point $x$ is extreme if and only if it is maximal and $\delta_x$ is irreducible.
\end{thm}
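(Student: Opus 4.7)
The plan is to exploit the fact that $\rC(K)$ is generated as a C*-algebra by $\rA(K)$, so that representing maps $\mu:\rC(K)\to\cM_n$ are essentially in bijection (via Stinespring) with dilations of $x$. This will turn the first assertion into a routine translation between the two viewpoints, and will let me reduce the second assertion to a standard Schur-lemma argument for the irreducible $*$-representation $\delta_x$.

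For the first equivalence, suppose $x$ is maximal and let $\mu$ represent $x$. Take a minimal Stinespring dilation $\mu(f)=V^*\pi(f)V$ with $\pi:\rC(K)\to B(H)$ a unital $*$-homomorphism and $V:\bC^n\to H$ an isometry. Restricting $\pi$ to $\rA(K)$ gives, via Theorem \ref{thm:nc_kadison}, a point $y\in K_{\dim H}$ satisfying $x=V^*yV$, so $y$ dilates $x$. Maximality forces $y\simeq x\oplus z$, and because $\rA(K)$ generates $\rC(K)$ as a C*-algebra the $*$-homomorphism $\pi$ splits compatibly as $\delta_x\oplus\delta_z$, so $\mu=V^*\pi V=\delta_x$. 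Conversely, if $\delta_x$ is the unique representing map and $y\in K_m$ dilates $x$ via an isometry $\alpha\in\cM_{m,n}$, then $\mu:=\alpha^*\delta_y(\cdot)\alpha$ is a UCP map representing $x$, hence equals $\delta_x$. Since the compression of the $*$-homomorphism $\delta_y$ by $\alpha$ is again multiplicative, a standard multiplicative-domain computation shows that $\alpha\alpha^*$ lies in the commutant of $\delta_y(\rC(K))$; thus the range of $\alpha$ is $\delta_y$-invariant, $\delta_y$ splits as $\delta_x\oplus\delta_z$, and $y\simeq x\oplus z$, proving the dilation is trivial.

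For the second equivalence, the direction extreme $\Rightarrow$ maximal is immediate, since a non-trivial dilation $x=\alpha^*y\alpha$ is already a one-term nc convex combination that extremality would force to split. If $\delta_x$ were reducible, a non-trivial projection $p\in\delta_x(\rC(K))'$ commutes in particular with every $\delta_x(a)$, $a\in\rA(K)$, which translates into a two-term nc convex decomposition of $x$ in which the summands $\alpha_i^*x_i\alpha_i$ cannot be scalar multiples of $x$ unless $p\in\{0,1\}$, contradicting extremality. Conversely, assume $x$ is maximal with $\delta_x$ irreducible, and write $x=\sum\alpha_i^*x_i\alpha_i$. Then $\nu:=\sum\alpha_i^*\delta_{x_i}(\cdot)\alpha_i$ represents $x$, hence $\nu=\delta_x$. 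Assembling $\pi=\bigoplus\delta_{x_i}$ and $V:\bC^n\to\bigoplus\bC^{n_i}$ with $V\xi=(\alpha_i\xi)_i$, the identity $\delta_x=V^*\pi V$ and the same multiplicative-domain argument force $V\bC^n$ to be $\pi$-invariant. Projecting to the $i$-th coordinate, $\mathrm{range}(\alpha_i)$ is $\delta_{x_i}$-invariant, so $x_i$ splits as $y_i\oplus z_i$. Moreover $\alpha_i$ intertwines $\delta_x$ with $\delta_{x_i}$, and by Schur's lemma applied to the irreducible $\delta_x$, one has $\alpha_i^*\alpha_i=c_i\cdot 1_n$; this gives $\alpha_i^*x_i\alpha_i=c_ix$ and, when $c_i>0$, realises $y_i$ as unitarily equivalent to $x$ via the isometry $\alpha_i/\sqrt{c_i}$. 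That is exactly the definition of extremality.

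The main obstacle is the reverse direction of the second equivalence: making precise how the irreducibility of $\delta_x$ interacts with the block-diagonal $*$-representation $\pi=\bigoplus\delta_{x_i}$ to yield simultaneously the scalar-multiple condition and the unitary equivalence $y_i\sim x$. Everything rests on the intertwining identity $\delta_{x_i}(f)\alpha_i=\alpha_i\delta_x(f)$, which follows from $\pi$-invariance of $V\bC^n$; once that is in hand, Schur's lemma for $\delta_x$ supplies both conclusions in one stroke.
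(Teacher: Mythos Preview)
The paper does not give its own proof of this theorem: it is stated as a combination of \cite{DK2019}*{Theorem 5.2.3} and \cite{DK2019}*{Theorem 6.1.6}, so there is nothing in the present paper to compare against directly. That said, your argument is correct and follows exactly the natural line one would expect in \cite{DK2019}: passing between representing maps and dilations via Stinespring, using the multiplicative-domain criterion to see that a compressed $*$-homomorphism forces invariance of the compressing subspace, and then invoking Schur's lemma on the irreducible $\delta_x$ to extract both the scalar condition $\alpha_i^*\alpha_i=c_i 1_n$ and the unitary equivalence $y_i\sim x$. The only place that could use one more sentence is the ``extreme $\Rightarrow$ $\delta_x$ irreducible'' step: you should spell out that a nontrivial reducing projection $p$ yields the two-term nc convex combination $x=\beta_1(\beta_1^*x\beta_1)\beta_1^*+\beta_2(\beta_2^*x\beta_2)\beta_2^*$ with $\beta_1,\beta_2$ isometries onto $\operatorname{ran}p$ and $\operatorname{ran}(1-p)$, and that the summand $a\mapsto a(x)p$ cannot be a scalar multiple of $x$ because evaluating at $a=1$ would force $p\in\{0,1\}$. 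With that made explicit, the proof is complete.
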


\begin{defn}
Let $\mu \colon \rC(K) \to \cM_m$ be an nc state. We say that a pair $(x,\alpha) \in K_n \times \cM_{n,m}$ is a {\em representation} of $\mu$ if $\alpha$ is an isometry and $\mu = \alpha^* \delta_x \alpha$. The representation $(x,\alpha)$ is {\em minimal} if $\rC(K)(x) \alpha H_m$ is dense in $H_n$.
\end{defn}

For a compact nc convex set $K$, there is an order on the set of nc states on $\rC(K)$ that is analogous to the Choquet order on the set of probability measures on a compact convex set. The next definition is \cite{DK2019}*{Definition 8.2.1}. 

\begin{defn}
Let $K$ be a compact nc convex set and let $\mu,\nu \colon \allowbreak \rC(K) \to \cM_m$ be nc states. We say that $\mu$ is dominated by $\nu$ in the {\em nc Choquet order} and write $\mu \prec_c \nu$ if $\mu(f) \leq \nu(f)$ for every $n$ and every convex nc function $f \in \cM_n(\rC(K))$.
\end{defn}

For each $n$, the nc Choquet order is a partial order on the set of $\cM_n$-valued nc states on $\rC(K)$ \cite{DK2019}*{Proposition 8.2.4}. It is important to note that if $\mu \prec_c \nu$, then $\mu$ and $\nu$ have the same barycenter \cite{DK2019}*{Lemma 8.3.3}.

In this paper, we will be particularly interested in representing maps that are maximal with respect to the nc Choquet order. However, there is another order on the set of nc states on $\rC(K)$, inspired by the dilation order on probability measures introduced in \cite{DK2016}, that is often more convenient to work with. The next definition is \cite{DK2019}*{Definition 8.3.1}. 

\begin{defn}
Let $K$ be a compact nc convex set and let $\mu, \nu \colon \allowbreak \rC(K) \to \cM_l$ be nc states. We say that $\mu$ is dominated by $\nu$ in the {\em dilation order} and write $\mu \prec_d \nu$ if there are representations $(x, \alpha) \in K_m \times \cM_{m,l}$ for $\mu$ and $(y, \beta) \in K_n \times \cM_{n,l}$ for $\nu$ along with an isometry $\gamma \in \cM_{m,n}$, such that $\beta = \gamma \alpha$ and $x = \gamma^* y \gamma$.
\end{defn}

A key result from \cite{DK2019}*{Theorem 8.5.1} is that the nc Choquet order and the dilation order coincide.

\begin{thm}
Let $K$ be a compact nc convex set and let $\mu,\nu : \allowbreak \rC(K) \to \cM_n$ be nc states. Then $\mu \prec_c \nu$ if and only if $\mu \prec_d \nu$. 
\end{thm}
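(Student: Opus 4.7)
The plan is to prove the two implications separately, with the direction $\mu \prec_d \nu \Rightarrow \mu \prec_c \nu$ a direct computation and the converse $\mu \prec_c \nu \Rightarrow \mu \prec_d \nu$ requiring a matrix-valued Hahn-Banach / dilation argument.

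For the forward direction, suppose $\mu \prec_d \nu$ with witnessing representations $(x, \alpha)$ of $\mu$, $(y, \beta)$ of $\nu$, and isometry $\gamma$ satisfying $\beta = \gamma \alpha$ and $x = \gamma^* y \gamma$. Let $f \in \cM_k(\rC(K))$ be a convex nc function. Since $\delta_x$ is the unique $*$-homomorphism extending $x$ and convex nc functions lie in $\cM_k(\rC(K))$, applying $\delta_x$ entrywise yields $\delta_x(f) = f(x)$, and consequently
\[
\mu(f) = (1_k \otimes \alpha^*) f(x) (1_k \otimes \alpha), \qquad \nu(f) = (1_k \otimes \beta^*) f(y) (1_k \otimes \beta).
\]
Applying the matricial convexity inequality recalled after the definition of convex nc function to $x = \gamma^* y \gamma$ and then substituting $\beta = \gamma \alpha$ yields $\mu(f) \leq \nu(f)$.

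For the reverse direction, fix a minimal representation $(y, \beta) \in K_n \times \cM_{n,l}$ of $\nu$, so $\nu = \beta^* \delta_y \beta$. The strategy is an nc analogue of the Cartier-Fell-Meyer argument: use the hypothesis $\mu(f) \leq \nu(f)$ for all convex nc functions $f$ to set up a Hahn-Banach / Arveson extension problem whose solution is a UCP map $\phi \colon \rC(K) \to \cM_l$ that both dominates $\mu$ appropriately and factors as a compression of $\delta_y$ via some isometry. Applying the Stinespring dilation to $\phi$, and then absorbing its dilation data into a larger connecting isometry $\gamma$, produces a point $x \in K_m$ and isometries $\alpha$ and $\gamma$ with $\mu = \alpha^* \delta_x \alpha$, $\beta = \gamma \alpha$, and $x = \gamma^* y \gamma$; minimality of $(y, \beta)$ ensures that these isometries align compatibly and nothing trivial needs to be discarded.

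The main obstacle lies in the reverse direction, specifically in ensuring that the UCP extension produced by the Hahn-Banach step can be realized as a $*$-homomorphism $\delta_x$ associated with an honest point of $K$, rather than a merely generic UCP map into $\cM_l$. This is precisely where the full matricial family of convex nc functions (as opposed to scalar-valued convex functions) enters: their rigidity in the matrix-valued extension problem should force any solution to factor through a point evaluation, via the duality between compact nc convex sets and operator systems. Once this factorization is secured, the remaining bookkeeping — matching the Stinespring isometries to the fixed representation of $\nu$ and arranging $\beta = \gamma \alpha$ together with $x = \gamma^* y \gamma$ — is routine given minimality.
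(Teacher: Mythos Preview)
This theorem is not proved in the present paper; it is quoted from \cite{DK2019}*{Theorem 8.5.1}, so there is no ``paper's own proof'' to compare against here. That said, your proposal can be assessed on its own merits.

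Your forward direction $\mu \prec_d \nu \Rightarrow \mu \prec_c \nu$ is correct and is exactly the intended computation: one uses that $\delta_x(f) = f(x)$ for $f \in \cM_k(\rC(K))$ (both sides are $*$-homomorphisms agreeing on $\rA(K)$), then the convexity inequality $f(\gamma^* y \gamma) \le (1_k \otimes \gamma^*) f(y) (1_k \otimes \gamma)$, and finally conjugates by $1_k \otimes \alpha$ together with $\beta = \gamma\alpha$.

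The reverse direction, however, is not a proof but a wish list. You never specify what the ``Hahn--Banach / Arveson extension problem'' actually is, nor what ``dominates $\mu$ appropriately'' means. More seriously, the roles of $\phi$, $\delta_x$, and $\delta_y$ are tangled: you want $\phi$ to be simultaneously a compression of $\delta_y$ and something whose Stinespring dilation produces a representation of $\mu$, but you never explain how the hypothesis $\mu(f) \le \nu(f)$ for all convex $f$ forces the existence of such a $\phi$, nor why its Stinespring data would recover $\mu$ rather than some other state. Your last paragraph concedes this: you say the ``main obstacle'' is ensuring the extension factors through a point evaluation, assert that the matricial family of convex functions ``should force'' this, and stop. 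That is precisely the substance of the theorem, and invoking ``rigidity'' and ``duality'' is not an argument. In the actual argument in \cite{DK2019} the work goes into an operator-valued separation step (producing enough convex nc functions to detect the failure of the dilation relation), and that step is entirely absent here.
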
 

By \cite{DK2019}*{Corollary 8.3.8} every point $x \in K$ admits a representing map that is maximal in the nc Choquet order (or equivalently, in the dilation order). These states will play an important role in what follows. The next result is \cite{DK2019}*{Theorem 8.3.7}.

\begin{thm}
Let $\mu \colon \rC(K) \to \cM_m$ be an nc state with representation $(x, \alpha) \in K_n \times \cM_{n,m}$. If $\mu$ is maximal in the nc Choquet order and the representation $(x,\alpha)$ is minimal, then $x$ is a maximal point. Conversely, if $x$ is a maximal point, then $\mu$ is maximal.
\end{thm}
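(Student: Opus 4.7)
My plan is to prove both directions, with the converse being relatively direct and the forward direction requiring a more delicate Stinespring-theoretic argument.

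For the converse, I would assume $x$ is maximal and suppose $\mu \prec_c \nu$. By the preceding theorem $\mu \prec_d \nu$, so by definition there exist representations of $\mu$ and $\nu$ witnessing a dilation. Using the minimality of $(x,\alpha)$ and the uniqueness of the minimal Stinespring dilation, I can arrange the representation of $\mu$ to be $(x,\alpha)$ itself, obtaining a representation $(y,\beta)$ of $\nu$ and an isometry $\gamma \colon H_n \to H_p$ with $\beta = \gamma\alpha$ and $\gamma^* y \gamma = x$. Since $x$ is maximal, the dilation $(y,\gamma)$ is trivial, so $\gamma H_n$ is invariant under $\delta_y|_{\rA(K)}$; because $\rA(K)$ generates $\rC(K)$ as a C*-algebra, this invariance extends to all of $\rC(K)$, making $\gamma H_n$ reducing for $\delta_y$. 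Consequently $\gamma^*\delta_y\gamma$ is a $*$-homomorphism $\rC(K) \to \cM_n$ that agrees with $\delta_x$ on $\rA(K)$, and by the uniqueness of the point evaluation $\delta_x$, the two $*$-homomorphisms coincide on $\rC(K)$. Then $\nu = \beta^*\delta_y\beta = \alpha^*\gamma^*\delta_y\gamma\alpha = \alpha^*\delta_x\alpha = \mu$, so $\mu$ is maximal.

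For the forward direction I would argue by contradiction. Assume $\mu$ is maximal, $(x,\alpha)$ is minimal, and suppose $x$ is not maximal. By the preceding characterization there is a UCP map $\psi \colon \rC(K) \to \cM_n$ with $\psi|_{\rA(K)} = x$ and $\psi \neq \delta_x$. Let $\psi = V^*\delta_z V$ be a minimal Stinespring dilation, with $V \colon H_n \to H_q$ an isometry, and set $\nu := \alpha^*\psi\alpha = (V\alpha)^*\delta_z(V\alpha)$. Since $V^*zV = x$ on $\rA(K)$, the isometry $V$ realizes $\mu \prec_d \nu$, and maximality of $\mu$ forces $\mu = \nu$. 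The minimality of $(x,\alpha)$ makes $(\delta_x,\alpha)$ the minimal Stinespring dilation of $\mu$; applying Stinespring uniqueness to the Stinespring $(\delta_z, V\alpha)$ of $\mu = \nu$ produces an isometry $W \colon H_n \to H_q$ with $W\delta_x = \delta_z W$, $W\alpha = V\alpha$, and $WH_n$ reducing for $\delta_z$.

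The main obstacle is then to leverage minimality to conclude $V = W$, since this would make $VH_n$ reducing, force $\psi$ to be a $*$-homomorphism, and therefore give $\psi = \delta_x$ by uniqueness of $*$-homomorphism extensions, contradicting $\psi \neq \delta_x$. I would proceed by decomposing $H_q = WH_n \oplus (WH_n)^\perp$ and writing $V = V_1 + V_2$ accordingly; the relation $W\alpha = V\alpha$ immediately gives $V_2\alpha = 0$. Using the block decomposition $\delta_z = W\delta_x W^* \oplus \rho$ on $WH_n \oplus (WH_n)^\perp$, the map $\psi$ becomes $\psi = T^*\delta_x T + V_2^*\rho V_2$ with $T := W^*V_1$. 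Imposing $\psi|_{\rA(K)} = \delta_x|_{\rA(K)}$ and applying the Kadison--Schwarz inequality to $g \in \rA(K)$ together with the already-established equality $\alpha^*\psi\alpha = \alpha^*\delta_x\alpha$ on $\rC(K)$, one obtains $\psi(g)\alpha = \delta_x(g)\alpha$ on the $*$-subalgebra generated by $\rA(K)$, and the density of $\delta_x(\rC(K))\alpha H_m$ in $H_n$ from minimality propagates this to yield $V_2 = 0$ and $T = I$, hence $V = W$.
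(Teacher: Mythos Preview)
The paper does not prove this statement; it is quoted from \cite{DK2019}*{Theorem 8.3.7} as background. So there is no proof in the paper to compare against, and I will assess your argument on its own merits.

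\textbf{Converse direction.} You invoke ``the minimality of $(x,\alpha)$'' to replace the existentially quantified representation of $\mu$ in the definition of $\prec_d$ by the given one. But minimality of $(x,\alpha)$ is only assumed in the forward direction, not in the converse. This is a genuine (if minor) slip. The fix is easy: pass to the minimal subrepresentation $(x',\alpha')$, where $x'$ is the compression of $x$ to the reducing subspace $\overline{\delta_x(\rC(K))\alpha H_m}$. Then $x'$ is a direct summand of $x$, and direct summands of maximal points are maximal (a nontrivial dilation of $x'$ would give one of $x$). With $(x',\alpha')$ minimal and $x'$ maximal, your argument goes through verbatim.

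\textbf{Forward direction.} Your strategy is correct, and the contradiction does follow, but the last paragraph compresses the only nontrivial step. The claim ``$\psi(g)\alpha = \delta_x(g)\alpha$ on the $*$-subalgebra generated by $\rA(K)$'' cannot be obtained by a single application of Kadison--Schwarz; it requires an induction that alternates between two statements. From Kadison--Schwarz for $a\in\rA(K)$ together with $\alpha^*\psi\alpha=\alpha^*\delta_x\alpha$ you get $(1-VV^*)\delta_z(a)V\alpha=0$, hence the intertwining $\delta_z(a)V\alpha=V\delta_x(a)\alpha$ on $\rA(K)$. Now suppose inductively that $\delta_z(g')V\alpha=V\delta_x(g')\alpha$ holds for $g'$ of degree $\le k$ in $\rA(K)$. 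For $a\in\rA(K)$ and such $g'$ one computes $\psi(ag')\alpha = V^*\delta_z(a)\delta_z(g')V\alpha = V^*\delta_z(a)V\delta_x(g')\alpha = \delta_x(a)\delta_x(g')\alpha = \delta_x(ag')\alpha$. Then, using $\alpha^*\psi(g^*g)\alpha=\alpha^*\delta_x(g^*g)\alpha$ and the identity just established, a direct norm computation gives $\|\delta_z(g)V\alpha\xi - V\delta_x(g)\alpha\xi\|=0$ for $g$ of degree $\le k+1$, closing the induction. By density this yields $\delta_z(f)V\alpha = V\delta_x(f)\alpha$ for all $f\in\rC(K)$, which is exactly the statement $W=V$ on $\delta_x(\rC(K))\alpha H_m$, hence $V=W$ by minimality. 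Your conclusion $V_2=0$, $T=I$ then follows. So the plan works, but the phrase ``density from minimality propagates this'' hides an honest inductive argument that should be spelled out.
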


\subsection{Minimal C*-algebra}

The minimal C*-algebra associated to the operator system of continuous affine nc functions on a compact nc convex set will play an important role in this paper.

For a compact nc convex set $K$, the minimal C*-algebra $\cmin(\rA(K))$ of the operator system $\rA(K)$ is a C*-algebra that is uniquely determined up to isomorphism by the universal property that there is a unital complete order embedding $\iota : \rA(K) \to \cmin(\rA(K))$ such that $\cmin(\rA(K)) = \ca(\iota(\rA(K)))$ and for any unital C*-algebra $B$ and unital complete order embedding $\phi : \rA(K) \to B$ satisfying $B = \ca(\phi(\rA(K)))$, there is a a surjective homomorphism $\pi : B \to \cmin(\rA(K))$ satisfying $\pi \circ \phi = \iota$.
\[
\begin{tikzcd}
                                        & B=\ca(\phi(\rA(K))) \arrow[d, "\pi", two heads] \\
\rA(K) \arrow[r, "\iota", hook] \arrow[ru, "\phi", hook] & \cmin(\rA(K))                          
\end{tikzcd}
\]

It was observed in \cite{DK2019}*{Section 6.5} that the extreme boundary $\partial K$ can be identified with a (potentially proper) subset of the irreducible representations of $\cmin(\rA(K))$. The next result is \cite{DK2019}*{Theorem 6.5.1}.

\begin{thm}
Let $K$ be a compact nc convex set and let $L$ denote the nc state space of the minimal C*-algebra $\cmin(\rA(K))$, identified with the set of nc states on $\rC(K)$ that factor through $\cmin(\rA(K))$. Then $L$ is the closed nc convex hull of the set of point evaluations $\{\delta_x : x \in \partial K\}$.
\end{thm}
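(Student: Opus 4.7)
The plan is to prove the two inclusions separately, with the nontrivial direction handled by the noncommutative Krein--Milman theorem applied to the compact nc convex set $L$.

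For the inclusion $\ol{\ncconv}\{\delta_x : x \in \partial K\} \subseteq L$, I would invoke the observation from \cite{DK2019}*{Section 6.5} recalled immediately above: for $x \in \partial K$, the point evaluation $\delta_x$ factors through the canonical surjection $\pi \colon \rC(K) \to \cmin(\rA(K))$ supplied by the universal property of $\cmin(\rA(K))$, so $\delta_x \in L$. Since $L$ is closed and nc convex, it contains the closed nc convex hull of the $\delta_x$.

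For the reverse inclusion, the noncommutative Krein--Milman theorem reduces the problem to showing $\partial L \subseteq \{\delta_x : x \in \partial K\}$. Let $\mu \in (\partial L)_n$. Since $L$ is the nc state space of the C$^*$-algebra $\cmin(\rA(K))$, its extreme points are precisely the irreducible $*$-representations $\bar\mu \colon \cmin(\rA(K)) \to \cM_n$, so $\mu = \bar\mu \circ \pi$ is a $*$-homomorphism on $\rC(K)$. Setting $x := \mu|_{\rA(K)} \in K_n$, uniqueness of $\delta_x$ as the $*$-homomorphism $\rC(K) \to \cM_n$ extending $x$ forces $\mu = \delta_x$. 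To verify $x \in \partial K$, observe first that $\delta_x = \bar\mu \circ \pi$ is irreducible by surjectivity of $\pi$. For maximality of $x$, I would use the standard fact that every UCP dilation of a $*$-homomorphism on a C$^*$-algebra is a trivial direct summand, so $\delta_x$ is maximal in the dilation order, hence also in the nc Choquet order by the coincidence of the two orders. The representation $(x, 1_n)$ of $\delta_x$ is minimal since $\delta_x(\rC(K)) H_n = H_n$, so the theorem from the excerpt relating maximal representing maps to maximal points (applied in its converse direction) gives that $x$ is a maximal point; combined with irreducibility of $\delta_x$, this shows $x \in \partial K$.

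The main obstacle is the first inclusion: a priori it is not obvious that $\delta_x$ vanishes on $\ker \pi$, since one would otherwise need to rule out that the irreducible map $\delta_x$ restricts nontrivially on this ideal. The required identification of $\partial K$ with a subset of $\Irr(\cmin(\rA(K)))$ is essentially a noncommutative boundary representation statement in the spirit of Arveson, which would need to be established separately (as in \cite{DK2019}).
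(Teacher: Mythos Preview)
Your argument for the reverse inclusion contains a genuine error. You claim that $\partial L \subseteq \{\delta_x : x \in \partial K\}$, but this inclusion is \emph{false} in general. The failure occurs precisely in your argument for the maximality of $x$: you assert that since $\delta_x : \rC(K) \to \cM_n$ is a $*$-homomorphism, every UCP dilation of it is a trivial direct summand, and therefore $\delta_x$ is maximal in the dilation order. But the dilation order on nc states of $\rC(K)$ is \emph{not} defined in terms of UCP dilations of the map $\delta_x$; it is defined via dilations of the barycenter point in $K$. Indeed, for \emph{every} $x \in K$ the map $\delta_x$ is a $*$-homomorphism on $\rC(K)$, so if your reasoning were valid it would show that every point of $K$ is maximal. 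In fact, by \cite{DK2019}*{Proposition 8.3.6} one has $\delta_x \prec_c \nu$ for \emph{every} representing map $\nu$ of $x$, so $\delta_x$ is the \emph{minimum} (not maximum) element among representing maps of $x$, and it is maximal in the nc Choquet order if and only if $x$ is a maximal point.

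To see concretely that $\partial L \not\subseteq \{\delta_x : x \in \partial K\}$ can occur, take $K = \max(P)$ for the classical Poulsen simplex $P$. Then $\cmin(\rA(K)) = \rC(P)$, so $\partial L$ corresponds to \emph{all} point evaluations $\{\delta_p : p \in P\}$, whereas $\{\delta_x : x \in \partial K\}$ consists only of $\{\delta_p : p \in \partial P\}$, a dense proper subset. More generally, your claimed inclusion holds exactly when $\partial K$ is closed in the spectral topology on $\Irr(K)$ (cf.\ Proposition~\ref{prop:closure-ext-bdy-spec-top}), which is precisely the nc Bauer condition and certainly not automatic. The correct route, as indicated in the proof of Proposition~\ref{prop:closure-ext-bdy-spec-top}, is to use the identity $\ker \pi = \bigcap_{y \in \partial K} \ker \delta_y$: this shows that the family $\{\bar\delta_y : y \in \partial K\}$ is a faithful family of representations of $\cmin(\rA(K))$, from which one deduces that its closed nc convex hull is the entire nc state space $L$.
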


\section{Noncommutative Choquet simplices} \label{sec:nc-simplices}

In this section, we will introduce the definition of an nc Choquet simplex, or briefly an nc simplex, and prove some basic results. At the end of this section we will give some examples. In particular, we will show that every simplex can be naturally identified with an nc simplex.

\begin{defn}
Let $K$ be a compact nc convex set. We will say that $K$ is an {\em nc simplex} if every point in $K$ has a unique representing map on $\rC(K)$ that is maximal in the nc Choquet order.
\end{defn}

\begin{prop} \label{prop:maximal-nc-states-nc-convex}
Let $K$ be a compact nc convex set and let $M$ denote the set of nc states on $\rC(K)$ that are maximal in the nc Choquet order. Then $M$ is an nc convex set.
\end{prop}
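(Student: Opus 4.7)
The plan is to apply Proposition 2.2.8 and show that $M$ is closed under nc convex combinations; since nc states on $\rC(K)$ form an nc convex set over the dual operator space $\rC(K)^*$, this implies $M$ is an nc convex set. So let $\nu = \sum_i \alpha_i^* \mu_i \alpha_i$ be an nc convex combination of states $\mu_i \in M$, with $\mu_i \colon \rC(K) \to \cM_{n_i}$ and $\sum_i \alpha_i^* \alpha_i = 1_n$. The strategy is to exhibit a representation of $\nu$ whose barycenter is a maximal point of $K$, and then invoke the ``converse'' direction of the theorem characterizing maximal nc states ($x$ maximal $\Rightarrow$ $\mu = \alpha^*\delta_x\alpha$ maximal).

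For each $i$, fix a minimal representation $(x_i, \beta_i) \in K_{m_i} \times \cM_{m_i, n_i}$ of $\mu_i$. The forward direction of that theorem yields that each $x_i$ is a maximal point of $K$. Set $y = \bigoplus_i x_i \in K$ (permissible since $K$ is closed under direct sums) and define $\gamma \colon H_n \to \bigoplus_i H_{m_i}$ by $h \mapsto \bigoplus_i \beta_i \alpha_i h$. The identity $\gamma^* \gamma = \sum_i \alpha_i^* \beta_i^* \beta_i \alpha_i = \sum_i \alpha_i^* \alpha_i = 1_n$ confirms $\gamma$ is an isometry, and since $\delta_y = \bigoplus_i \delta_{x_i}$, a short computation gives $\gamma^* \delta_y \gamma = \sum_i \alpha_i^* \mu_i \alpha_i = \nu$. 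Thus $(y, \gamma)$ is a representation of $\nu$.

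The crux is the sub-claim that $y = \bigoplus_i x_i$ is maximal. I will prove this by showing $\delta_y$ is the unique representing map of $y$. Writing $e_i$ for the embedding of $H_{m_i}$ into $\bigoplus_j H_{m_j}$, if $\psi \colon \rC(K) \to \cM_{\sum m_i}$ represents $y$, then each diagonal compression $e_i^* \psi e_i$ represents $x_i$ and hence equals the *-homomorphism $\delta_{x_i}$ by maximality of $x_i$. Compressing the Kadison--Schwarz inequality $\psi(f^*f) \geq \psi(f)^*\psi(f)$ by $e_i$ on both sides, and expanding the right side using $\sum_j e_j e_j^* = 1$, yields
\[
\delta_{x_i}(f)^*\delta_{x_i}(f) \;\geq\; \sum_j (e_j^* \psi(f) e_i)^*(e_j^* \psi(f) e_i),
\]
in which the $j = i$ term on the right already equals the left side. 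The remaining positive terms must vanish, forcing $e_j^* \psi(f) e_i = 0$ for all $j \neq i$ and all $f \in \rC(K)$. Consequently $\psi = \bigoplus_i \delta_{x_i} = \delta_y$, and $y$ is maximal.

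With $y$ maximal, the converse direction of the cited theorem applied to the representation $(y, \gamma)$ of $\nu$ gives that $\nu$ is maximal. Hence $M$ is closed under nc convex combinations, and the result follows. The principal obstacle is the sub-claim on direct sums preserving maximality; the remainder reduces to bookkeeping with representations and a straightforward application of Proposition 2.2.8.
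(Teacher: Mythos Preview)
Your proof is correct and follows essentially the same route as the paper's: build a representation of the combination from minimal representations of the summands, use the forward direction of the maximal-state theorem to get maximal barycenters $x_i$, observe the direct sum $y=\bigoplus_i x_i$ is maximal, and apply the converse direction. The only differences are cosmetic: the paper verifies closure under direct sums and under compressions separately (Definition~2.2.1) rather than handling a general nc convex combination in one stroke via Proposition~2.2.8, and the paper simply cites ``direct sums of maximal points are maximal'' as a known fact from \cite{DK2019}, whereas you supply a self-contained proof of this via Kadison--Schwarz and uniqueness of representing maps. Your argument for that sub-claim is clean and correct; the remaining positive summands in the compressed Kadison--Schwarz inequality indeed force the off-diagonal blocks to vanish.
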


\begin{rem}
Note that we do not claim the nc convex set $M$ is closed.
\end{rem}

\begin{proof}
We first show that $M$ is closed under direct sums. For a family $\{\mu_i \in M_{m_i}\}$ of nc states that are maximal in the nc Choquet order and a family $\{ \alpha_i \in \cM_{m,m_i} \}$ of isometries satisfying $\sum \alpha_i \alpha_i^* = 1_m$, let $\mu : \rC(K) \to \cM_m$ denote the nc state defined by $\mu = \sum \alpha_i \mu_i \alpha_i^*$. Let $\{(x_i, \xi_i) \in K_{n_i} \times \cM_{n_i, m_i} \}$ be a family such that each $(x_i,\xi_i)$ is a minimal representation of $\mu_i$. Let $\beta_i \in \cM_{n,n_i}$ be isometries satisfying $\sum \beta_i \beta_i^* = 1_n$, let $x = \sum \beta_i x_i \beta_i^* \in K_n$ and let $\xi = \sum \beta_i \xi_i \alpha_i^* \in \cM_{n,m}$. Then $(x,\xi)$ is a representation for $\mu$. By \cite{DK2019}*{Theorem 8.3.7}, the maximality of each map $\mu_i$ along with the minimality of the corresponding representation $(x_i,\xi_i)$ implies that each point $x_i$ is maximal. Since direct sums of maximal points are maximal, $x$ is a maximal point. Applying \cite{DK2019}*{Theorem 8.3.7} again, we conclude that $\mu$ is maximal in the nc Choquet order.

Next, we show that $M$ is closed under compressions. For a maximal nc state $\nu \in M_m$ that is maximal in the nc Choquet order and an isometry $\beta \in \cM_{m,l}$, let $\lambda = \beta^* \nu \beta$. Let $(y,\eta) \in K_n \times \cM_{n,m}$ be a minimal representation for $\nu$. Then $(y, \eta \beta)$ is a (not necessarily minimal) representation for $\nu$. By \cite{DK2019}*{Theorem 8.3.7}, the maximality of $\nu$ and the minimality of the corresponding representation $(y,\eta)$ implies that the point $y$ is maximal. Applying \cite{DK2019}*{Theorem 8.3.7} again, we conclude that $\lambda$ is maximal in the nc Choquet order.
\end{proof}

The next result implies that in order to verify that a compact nc convex set $K$ is an nc simplex, it suffices to look at the points in $K_1$.

\begin{prop}
Let $K$ be a compact nc convex set. Then $K$ is an nc simplex if and only if every point in $K_1$ has a unique representing map on $\rC(K)$ that is maximal in the nc Choquet order.
\end{prop}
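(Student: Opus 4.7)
The forward direction is immediate from the definition, so the content is in the reverse direction. The plan is to reduce the uniqueness question at level $n$ to uniqueness at level $1$ by vector-state compressions, using the preceding Proposition~\ref{prop:maximal-nc-states-nc-convex} to ensure that maximality is preserved under compression.

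Concretely, fix a point $x \in K_n$ and suppose $\mu,\nu \colon \rC(K) \to \cM_n$ are two nc states that both represent $x$ and are both maximal in the nc Choquet order. I want to show $\mu = \nu$. For each unit vector $\xi \in H_n$, regarded as an isometry $\xi \in \cM_{n,1}$, form the compressions $\xi^*\mu\xi$ and $\xi^*\nu\xi$, which are ordinary (scalar) nc states on $\rC(K)$. Their barycenters are both $\xi^* x \xi \in K_1$, since the barycenter is computed by restriction to $\rA(K)$, which is equivariant under isometric compression. By the compression half of Proposition~\ref{prop:maximal-nc-states-nc-convex}, both $\xi^*\mu\xi$ and $\xi^*\nu\xi$ are maximal in the nc Choquet order. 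The hypothesis applied to $\xi^* x \xi \in K_1$ therefore gives $\xi^*\mu\xi = \xi^*\nu\xi$.

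This holds for every unit vector $\xi \in H_n$, so for any self-adjoint $f \in \rC(K)$ the self-adjoint operators $\mu(f), \nu(f) \in \cM_n$ have identical quadratic forms and are therefore equal. By linearity $\mu = \nu$ on all of $\rC(K)$, which completes the argument.

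There is no serious obstacle; the only thing to verify is that vector-state compression of a maximal nc state is maximal, which is precisely the content of the compression part of Proposition~\ref{prop:maximal-nc-states-nc-convex}. One minor point worth writing out carefully is that $\xi \in \cM_{n,1}$ really is an isometry (it is a unit vector) and that $\xi^* x \xi \in K_1$ since $K$ is closed under isometric compressions, so the proposition applies as stated.
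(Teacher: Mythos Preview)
Your argument is correct and matches the paper's proof essentially verbatim: both compress the two candidate maximal representing maps by an arbitrary isometry $\alpha \in \cM_{n,1}$, invoke Proposition~\ref{prop:maximal-nc-states-nc-convex} to preserve maximality, apply the $K_1$ hypothesis to conclude the compressions agree, and then use that arbitrariness of the unit vector forces equality of the original maps. The only difference is cosmetic (you spell out the quadratic-form step that the paper leaves implicit).
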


\begin{proof}
If $K$ is an nc simplex, then it follows immediately that every point in $K_1$ has a unique nc Choquet maximal representing map on $\rC(K)$.

Conversely, suppose that every point in $K_1$ has a unique nc Choquet maximal representing map on $\rC(K)$ and let $y \in K_n$ be an arbitrary point. Let $\nu,\nu' : \rC(K) \to \cM_n$ be nc Choquet maximal representing maps for $y$. Let $\alpha \in \cM_{n,1}$ be an isometry and let $x = \alpha^* y \alpha$. Then $x \in K_1$, so by assumption it has a unique representing map that is maximal in the nc Choquet order.

Let $\mu = \alpha^* \nu \alpha$ and $\mu' = \alpha^* \nu' \alpha$. Then $\mu$ and $\mu'$ are representing maps for $x$. Furthermore, it follows from Proposition \ref{prop:maximal-nc-states-nc-convex} that they are maximal in the nc Choquet order since $\nu$ and $\nu'$ are. Hence $\mu = \mu'$. Since $\alpha$ was arbitrary, it follows that $\nu = \nu'$. Hence $y$ has a unique nc Choquet maximal representing maps. Since $y$ was arbitrary, $K$ is an nc simplex.
\end{proof}

\begin{prop} \label{prop:map-to-max-rep-measure-maximal}
Let $K$ be an nc simplex and let $M$ denote the set of maximal nc states on $\rC(K)$. For $x \in K_n$, let $\mu_x \in M_n$ denote the unique nc Choquet maximal representing maps for $x$. Then the map $\theta : K \to M$ defined by $\theta(x) = \mu_x$ is an affine nc map.
\end{prop}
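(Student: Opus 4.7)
The plan is to show the three defining properties of an affine nc map for $\theta$ by exploiting the uniqueness of the maximal representing map together with the fact, already established in Proposition \ref{prop:maximal-nc-states-nc-convex}, that direct sums and compressions of nc Choquet maximal nc states remain maximal. The grading condition $\theta(K_n) \subseteq M_n$ is automatic from the definition, since $\mu_x \colon \rC(K) \to \cM_n$ whenever $x \in K_n$, and $\mu_x$ is by construction maximal in the nc Choquet order. So the real content lies in verifying that $\theta$ respects direct sums and compressions by isometries.

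For direct sums, fix a family $\{x_i \in K_{n_i}\}$ and isometries $\{\alpha_i \in \cM_{n,n_i}\}$ with $\sum \alpha_i \alpha_i^* = 1_n$, and set $x = \sum \alpha_i x_i \alpha_i^* \in K_n$. Consider the nc state
\[
\mu = \sum \alpha_i \mu_{x_i} \alpha_i^* \colon \rC(K) \to \cM_n.
\]
Restricting to $\rA(K)$ and using that each $\mu_{x_i}$ represents $x_i$ gives $\mu|_{\rA(K)} = \sum \alpha_i x_i \alpha_i^* = x$, so $\mu$ represents $x$. By Proposition \ref{prop:maximal-nc-states-nc-convex}, $\mu \in M_n$. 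Since $K$ is an nc simplex, $x$ has a unique nc Choquet maximal representing map, hence $\mu = \mu_x = \theta(x)$, which is the required identity $\theta(\sum \alpha_i x_i \alpha_i^*) = \sum \alpha_i \theta(x_i) \alpha_i^*$.

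For isometry equivariance, fix $y \in K_m$ and an isometry $\beta \in \cM_{m,n}$, and consider
\[
\lambda = \beta^* \mu_y \beta \colon \rC(K) \to \cM_n.
\]
Then $\lambda|_{\rA(K)} = \beta^* y \beta$, so $\lambda$ represents $\beta^* y \beta \in K_n$. By Proposition \ref{prop:maximal-nc-states-nc-convex}, $\lambda \in M_n$, and by uniqueness of the nc Choquet maximal representing map we obtain $\lambda = \mu_{\beta^* y \beta} = \theta(\beta^* y \beta)$, which gives the desired equivariance $\theta(\beta^* y \beta) = \beta^* \theta(y) \beta$.

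I do not expect a substantive obstacle here, since all three properties reduce to the two observations that (i) the barycenter map is linear in the obvious categorical sense (direct sums and compressions of nc states correspond to direct sums and compressions of their barycenters), and (ii) maximality in the nc Choquet order is preserved under both operations by Proposition \ref{prop:maximal-nc-states-nc-convex}. The uniqueness afforded by the nc simplex hypothesis then forces the identifications. The only subtlety worth flagging is that the proposition is not claiming continuity of $\theta$, only that it is an affine nc map; continuity would require additional argument, but it is not part of the statement.
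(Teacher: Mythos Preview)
Your proof is correct and follows essentially the same approach as the paper: in both cases the three conditions of an affine nc map are verified by showing that the candidate nc state (direct sum or compression of the $\mu_{x_i}$) has the right barycenter, invoking Proposition~\ref{prop:maximal-nc-states-nc-convex} to see that it is nc Choquet maximal, and then appealing to uniqueness from the nc simplex hypothesis. Your closing remark about continuity not being claimed also mirrors the paper's own remark following the proposition.
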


\begin{rem}
Let $L$ denote the nc state space of $\rC(K)$. The map $\theta$ is a section of the barycenter map which maps $\mu \in L$ to the restriction $\mu|_{\rA(K)} \in K$. However, $\theta$ is not necessarily continuous.
\end{rem}

\begin{proof}
It is clear that $\theta(K_n) \subseteq M_n$ for each $n$.

For a family of points $\{x_i \in K_{n_i} \}$ and a family of isometries $\{ \alpha_i \in \cM_{n_i,n} \}$ satisfying $\sum \alpha_i \alpha_i^* = 1_n$, the map $\sum \alpha_i \mu_{x_i} \alpha_i^*$ has barycenter $\sum \alpha_i x_i \alpha_i^*$. Furthermore, since each $\mu_i$ is maximal, it follows from Proposition \ref{prop:maximal-nc-states-nc-convex} that $\sum \alpha_i \mu_{x_i} \alpha_i^*$ is maximal in the nc Choquet order. By uniqueness, $\theta(\sum \alpha_i x_i \alpha_i^*) = \sum \alpha_i \mu_{x_i} \alpha_i^* = \sum \alpha_i \theta(x_i) \alpha_i^*$.

Similarly, for $y \in K_n$ and an isometry $\beta \in \cM_{m,n}$, the map $\beta^* \mu_y \beta$ has barycenter $\beta^* y \beta$, and it follows from Proposition \ref{prop:maximal-nc-states-nc-convex} that $\beta^* \mu_y \beta$ is maximal. Hence by uniqueness, $\theta(\beta^* x \beta) = \beta^* \mu_x \beta = \beta^* \theta(x) \beta$.
\end{proof}

The existence of the map provided by the next result is the key technical property satisfied by nc simplices.

\begin{thm} \label{thm:characterization-nc-simplex-expectation}
Let $K$ be a compact nc convex set. Then $K$ is an nc simplex if and only if there is a unital completely positive map $\phi : \rC(K) \to \rA(K)^{**}$ such that $\phi|_{\rA(K)} = \id_{\rA(K)}$, meaning that the following diagram commutes:
\[
\begin{tikzcd}
\rC(K) \arrow[rr, "\phi"] & & \rA(K)^{**} \\
& \arrow[lu, "", hook] \rA(K) \arrow[ru, "", hook] &  
\end{tikzcd}
\]
If the map $\phi$ exists, then it is unique and factors through $\cmin(\rA(K))$. Moreover, $\phi$ extends to a normal conditional expectation $\phi^{**} : \rC(K)^{**} \to \rA(K)^{**}$ with range $\rA(K)^{**}$
\end{thm}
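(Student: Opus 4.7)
The approach is to establish a correspondence between UCP maps $\phi\colon \rC(K) \to \rA(K)^{**}$ satisfying $\phi|_{\rA(K)} = \id$ and assignments $x \mapsto \mu_x$ of unique nc Choquet-maximal representing maps provided by Proposition~\ref{prop:map-to-max-rep-measure-maximal}. The underlying identification is $\rA(K)^{**} \subseteq \rC(K)^{**} \cong \rB(K)$ arising as the bidual of the inclusion $\rA(K) \hookrightarrow \rC(K)$, and each $x \in K_n$ viewed as a UCP map $\rA(K) \to \cM_n$ extends uniquely to a normal UCP map $x^{**}\colon \rA(K)^{**} \to \cM_n$.

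For the forward direction, assuming $K$ is an nc simplex, I would invoke Proposition~\ref{prop:map-to-max-rep-measure-maximal} to obtain the affine nc map $\theta\colon K \to L$ sending $x$ to its unique maximal representing map $\mu_x$, and then define $\phi(f)(x) := \mu_x(f) = \theta(x)(f)$. The three equivariance conditions in the definition of an affine nc map for $\theta$ translate directly into $\phi(f)$ being a bounded affine nc function on $K$, hence corresponding to an element of $\rA(K)^{**}$ under the identification above. Unitality and complete positivity of $\phi$ are immediate from pointwise evaluation using the corresponding properties of each $\mu_x$, and $\phi|_{\rA(K)} = \id$ follows because $\mu_x|_{\rA(K)} = x$.

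For the reverse direction, given $\phi$, I would define $\mu_x := x^{**} \circ \phi\colon \rC(K) \to \cM_n$ for $x \in K_n$; this is UCP and represents $x$. The crux is to show $\mu_x$ is the unique nc Choquet-maximal representing map, which I would do by proving the stronger statement that $\nu \prec_c \mu_x$ for \emph{every} representing map $\nu$ of $x$. This rests on two observations. First, a noncommutative Jensen inequality: for a convex $f \in \cM_k(\rC(K))$ and representing map $\nu$ of $y \in K_m$ with a representation $(z, \alpha)$ (so $y = \alpha^* z \alpha$), convexity of $f$ gives
\[
f(y) = f(\alpha^* z \alpha) \leq (1_k \otimes \alpha^*) f(z) (1_k \otimes \alpha) = \nu(f).
\]
Applying this at every $y$ with $\nu = \mu_y$ yields $\phi(f) \geq f$ in $\cM_k(\rC(K)^{**})$ whenever $f$ is convex. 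Second, since $\nu|_{\rA(K)} = x$, uniqueness of normal extensions forces $\nu^{**}|_{\rA(K)^{**}} = x^{**}$, so
\[
\nu(f) = \nu^{**}(f) \leq \nu^{**}(\phi(f)) = x^{**}(\phi(f)) = \mu_x(f)
\]
for convex $f$, which is exactly $\nu \prec_c \mu_x$.

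For the ``moreover'' statements: uniqueness of $\phi$ is forced because $x^{**} \circ \phi = \mu_x$ is pinned down as the unique maximal representing map of $x$, and its values at all $x \in K$ determine $\phi(f) \in \rA(K)^{**} \subseteq \rB(K)$. Factoring through $\cmin(\rA(K))$ follows from the theorem in the excerpt identifying the nc state space of $\cmin(\rA(K))$ with nc states on $\rC(K)$ that factor through it, together with the observation that each maximal representing map $\mu_x$ belongs to this nc state space, since its minimal representation has a maximal dilation point. Finally, $\phi^{**}\colon \rC(K)^{**} \to \rA(K)^{**}$ is normal UCP, and uniqueness of the normal extension of $\phi|_{\rA(K)} = \id$ yields $\phi^{**}|_{\rA(K)^{**}} = \id$, so $\phi^{**}$ is a UCP idempotent onto $\rA(K)^{**}$, that is, a normal conditional expectation. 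I expect the main obstacle to be the pointwise bound $\phi(f) \geq f$ for convex $f$ in the reverse direction; once the noncommutative Jensen inequality is in place, the rest is formal manipulation of normal extensions and the nc Choquet order.
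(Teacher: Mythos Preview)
Your forward direction and the ``moreover'' statements match the paper's proof essentially verbatim: define $\phi(f)(x)=\mu_x(f)$, use Proposition~\ref{prop:map-to-max-rep-measure-maximal} to see this is a bounded affine nc function, and pass through a minimal representation $(y,\alpha)$ of $\mu_x$ with $y$ maximal to get factorization through $\cmin(\rA(K))$.

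Your reverse direction, however, takes a genuinely different route. The paper does \emph{not} prove a Jensen-type inequality $\phi(f)\geq f$ for convex $f$. Instead, it argues structurally: given any nc Choquet-maximal representing map $\mu$ for $x$, take a minimal representation $(y,\alpha)$; by \cite{DK2019}*{Theorem~8.3.7} the point $y$ is maximal, hence has $\delta_y$ as its \emph{only} representing map, forcing $y^{**}\circ\phi=\delta_y$; then
\[
\mu=\alpha^*\delta_y\alpha=\alpha^*(y^{**}\circ\phi)\alpha=(\alpha^* y\alpha)^{**}\circ\phi=x^{**}\circ\phi,
\]
so $\mu$ is uniquely determined. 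Your argument instead establishes the stronger fact that $x^{**}\circ\phi$ is the \emph{global maximum} (not merely the unique maximal element) among all representing maps of $x$ in the nc Choquet order, via the pointwise inequality $\phi(f)\geq f$ combined with $\nu^{**}|_{\rA(K)^{**}}=x^{**}$. This is correct and yields slightly more information, at the cost of invoking the convexity machinery explicitly; the paper's argument stays entirely on the dilation side and never touches convex nc functions.
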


\begin{proof}
Suppose $K$ is an nc Choquet simplex. For $n \in \bN$ and $f \in \cM_n(\rC(K))$, define a function $\phi(f) : K \to \cM_n(\cM)$ by $\phi(f)(x) = \mu_x(f)$ for $x \in K_k$, where $\mu_x : \rC(K) \to \cM_k$ denotes the unique nc Choquet maximal representing map for $x$. Then it follows from Proposition \ref{prop:map-to-max-rep-measure-maximal} that $\phi(f)$ is a bounded affine nc function. For $f \geq 0$, it is clear that $\phi(f) \geq 0$. Moreover, it is also clear that $\phi(a) = a$ for $a \in \rA(K)$. Hence $\phi : \rC(K) \to \rA(K)^{**}$ defines a unital completely positive map satisfying $\phi|_{\rA(K)} = \id_{\rA(K)}$.

For $x \in K_k$, let $(y,\alpha) \in K_m \times \cM_{m,k}$ be a minimal representation for $\mu_x$, so that $\mu_x = \alpha^* \delta_y \alpha$. Since $\mu_x$ is maximal, \cite{DK2019}*{Theorem 8.3.7} implies that $y$ is a maximal point. By \cite{DK2019}*{Proposition 5.2.4}, $\delta_y$ factors through $\cmin(\rA(K))$. Hence $\mu_x$ factors through $\cmin(\rA(K))$. It follows that $\phi$ factors through $\cmin(\rA(K))$.

Conversely, let $\phi : \rC(K) \to \rA(K)^{**}$ be a unital completely positive map such that $\phi|_{\rA(K)} = \id_{\rA(K)}$. Then for $y \in K_m$, the map $y^{**} \circ \phi$ has barycenter $y$. If $y$ is a maximal point, then it has a unique representing map $\delta_y : \rC(K) \to \cM_m$, so in this case $y^{**} \circ \phi = \delta_y$. For arbitrary $x \in K_k$, let $\mu : \rC(K) \to \cM_k$ be a representing map for $x$ that is maximal in the nc Choquet order and let $(y,\alpha) \in K_m \times \cM_{m,k}$ be a minimal representation for $\mu$. Then \cite{DK2019}*{Theorem 8.3.7} implies that $y$ is a maximal point, so from above, 
\[
\mu = \alpha^* \delta_y \alpha = \alpha^* (y^{**} \circ \phi) \alpha = (\alpha^* y \alpha) \circ \phi = x^{**} \circ \phi.
\]
In particular, $\mu$ is unique. Hence $K$ is an nc Choquet simplex. Note that this argument also implies the uniqueness of the map $\phi$.

Finally, since $\rA(K)^{**}$ is a dual operator system, we obtain a unique extension of $\varphi$ to a normal unital completely positive map $\varphi^{**} : \rC(K)^{**} \to \rA(K)^{**}$. It follows from above that $\varphi^{**}|_{\rA(K)^{**}} = \id_{\rA(K)^{**}}$. Hence $\varphi^{**}$ is a conditional expectation.
\end{proof}

The next result describes the unique nc Choquet maximal representing maps for points in an nc simplex. It follows immediately from the proof of Theorem \ref{thm:characterization-nc-simplex-expectation}.

\begin{cor} \label{cor:formula-affinization-map}
Let $K$ be an nc simplex and let $\phi : \rC(K) \to \rA(K)^{**}$ denote the unital completely positive map from Theorem \ref{thm:characterization-nc-simplex-expectation}. For $x \in K_n$, let $\mu_x : \rC(K) \to \cM_n$ denote the unique nc Choquet maximal representing map for $x$. Then $\mu_x = x^{**} \circ \phi$.
\end{cor}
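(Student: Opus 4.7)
The plan is that the identity $\mu_x = x^{**} \circ \phi$ is essentially a direct readout from the proof of Theorem \ref{thm:characterization-nc-simplex-expectation}: this very equation already appears near the end of the converse direction there, where it was used only to conclude uniqueness of the maximal representing map. So the first approach I would take is simply to point to that computation. Given $x \in K_n$ and its (now unique) nc Choquet maximal representing map $\mu_x$, I would choose a minimal representation $(y,\alpha) \in K_m \times \cM_{m,n}$ for $\mu_x$; by \cite{DK2019}*{Theorem 8.3.7} the point $y$ is maximal, so $y^{**} \circ \phi = \delta_y$ as shown in the proof of Theorem \ref{thm:characterization-nc-simplex-expectation}, and therefore
\[
\mu_x = \alpha^* \delta_y \alpha = \alpha^* (y^{**} \circ \phi) \alpha = x^{**} \circ \phi,
\]
using $x = \alpha^* y \alpha$ in the last equality.

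Alternatively, and perhaps more transparently, I would give a direct argument using the forward direction of the proof of Theorem \ref{thm:characterization-nc-simplex-expectation}. There $\phi$ is constructed pointwise by the rule $\phi(f)(y) = \mu_y(f)$ for $f \in \rC(K)$ and $y \in K$. Viewing $\phi(f) \in \rA(K)^{**}$ as a bounded affine nc function and identifying the bidual map $x^{**} : \rA(K)^{**} \to \cM_n$ with pointwise evaluation at $x$, I conclude $(x^{**} \circ \phi)(f) = \phi(f)(x) = \mu_x(f)$ for every $f \in \rC(K)$, which is exactly the claimed formula.

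The only step that deserves a sanity check --- the \emph{main obstacle}, if one can call it that --- is the identification of $x^{**}$ with pointwise evaluation at $x$ on $\rA(K)^{**}$. This is genuinely routine: because $\cM_n = \B(H_n)$ is a von Neumann algebra its bidual collapses back to it, and $x^{**}$ is by definition the unique normal extension of the unital complete order homomorphism $x : \rA(K) \to \cM_n$, which on the bounded affine nc functions comprising $\rA(K)^{**}$ is precisely pointwise evaluation at $x$. Once that identification is in hand the corollary is immediate, just as its statement in the paper anticipates.
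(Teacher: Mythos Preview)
Your proposal is correct and matches the paper's approach exactly: the paper states that the corollary ``follows immediately from the proof of Theorem \ref{thm:characterization-nc-simplex-expectation},'' and your first argument simply reproduces the displayed computation $\mu = \alpha^* \delta_y \alpha = \alpha^* (y^{**} \circ \phi) \alpha = (\alpha^* y \alpha)^{**} \circ \phi = x^{**} \circ \phi$ from the converse direction of that proof. Your alternative argument via the forward construction $\phi(f)(x) = \mu_x(f)$ is equally valid and arguably more transparent, but it is not a genuinely different route --- both are direct readouts from the same theorem, as you yourself note.
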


\section{Basic examples} \label{sec:basic-examples}

In this section we will give the first examples of nc simplices by applying the characterization from Theorem \ref{thm:characterization-nc-simplex-c-star-system}. In particular, we will show that every simplex can naturally be identified with an nc simplex.

An operator system $S$ is said to have the {\em weak expectation property} if there is an injective operator system $T$ such that $S \subseteq T \subseteq S^{**}$ . This definition, introduced by Kavruk \cite{Kav2012}*{Section 3.2} in a slightly different form, is a generalization of Lance's weak expectation property for C*-algebras (see \cite{BO2008}*{Chapter 13}).

\begin{thm} \label{thm:weak-exp-simplex}
The nc state space of an operator system with the weak expectation property is an nc simplex.
\end{thm}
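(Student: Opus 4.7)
The plan is to apply the characterization of nc simplices from Theorem \ref{thm:characterization-nc-simplex-expectation}. Let $S$ be an operator system with the weak expectation property, and let $K$ denote its nc state space, so that by the noncommutative Kadison representation theorem we may identify $\rA(K)$ with $S$. To conclude that $K$ is an nc simplex, it suffices to construct a unital completely positive map $\phi : \rC(K) \to \rA(K)^{**} = S^{**}$ whose restriction to $\rA(K)$ agrees with the canonical embedding $\rA(K) \hookrightarrow \rA(K)^{**}$.

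By the weak expectation property, there is an injective operator system $T$ fitting into a chain of unital complete order embeddings
\[
S \subseteq T \subseteq S^{**},
\]
whose composition is the canonical embedding of $S$ into its bidual. Since $\rC(K)$ is the C*-algebra generated by $\rA(K) \cong S$ inside it and the inclusion $\rA(K) \hookrightarrow \rC(K)$ is a unital complete order embedding (this is part of the basic nc convexity--operator system duality developed in \cite{DK2019}), the injectivity of $T$ produces a unital completely positive extension $\psi : \rC(K) \to T$ of the inclusion $S \hookrightarrow T$. Composing $\psi$ with the embedding $T \hookrightarrow S^{**}$ gives a unital completely positive map $\phi : \rC(K) \to S^{**}$ whose restriction to $\rA(K)$ is the canonical embedding $\rA(K) \hookrightarrow \rA(K)^{**}$. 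Theorem \ref{thm:characterization-nc-simplex-expectation} then immediately implies that $K$ is an nc simplex.

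The argument is essentially a one-line application of injectivity once the framework is in place; there is no substantial obstacle. The only point deserving a brief mention is the justification that $\rA(K) \subseteq \rC(K)$ is a unital complete order embedding, which is what licenses the use of injectivity of $T$, and this is a standard feature of the duality recalled in Section \ref{sec:nc-convexity}. Note also that the resulting map $\phi$ automatically factors through any injective operator system sitting between $S$ and $S^{**}$, in accordance with the factorization-through-$\cmin(\rA(K))$ statement in Theorem \ref{thm:characterization-nc-simplex-expectation}, although this is not needed for the proof.
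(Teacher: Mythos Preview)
Your proof is correct and follows essentially the same route as the paper: identify $S$ with $\rA(K)$, use the weak expectation property to produce an injective $T$ with $\rA(K)\subseteq T\subseteq \rA(K)^{**}$, extend the inclusion $\rA(K)\hookrightarrow T$ to a unital completely positive map $\rC(K)\to T\subseteq \rA(K)^{**}$ by injectivity, and then invoke the characterization of nc simplices. The only minor quibble is your closing aside: the factorization-through-$\cmin(\rA(K))$ clause in Theorem~\ref{thm:characterization-nc-simplex-expectation} refers to the minimal C*-algebra of $\rA(K)$, not to the injective intermediary $T$, so that remark is not quite on point---but it plays no role in the argument.
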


\begin{proof}
Let $S$ be an operator system with the weak expectation property and let $K$ denote the nc state space of $S$, so that $S$ is unitally completely order isomorphic to $\rA(K)$. Let $T$ be an injective operator system with $\rA(K) \subseteq T \subseteq \rA(K)^{**}$. By the injectivity of $T$, the identity map on $\rA(K)$ extends to a unital completely positive map $\psi : \rC(K) \to \rA(K)^{**}$. The result now follows from Theorem \ref{thm:characterization-nc-simplex-c-star-system}.
\end{proof}

An operator system $S$ is said to be {\em nuclear} if $S \tmin T = S \tmax T$ for every operator system $T$. Here, $S \tmin T$ and $S \tmax T$ denote the minimal and maximal tensor products respectively of $S$ and $T$ (see Section \ref{sec:tensor-products}). By \cite{HP2011}*{Theorem 3.5}, $S$ is nuclear if and only if the bidual $S^{**}$ is unitally completely order isomorphic to an injective C*-algebra. In particular, if $S$ is nuclear, then it has the weak expectation property. The next result follows immediately from this observation and Theorem \ref{thm:weak-exp-simplex}.

\begin{cor} \label{cor:nuclear-simplex}
The nc state space of a nuclear operator system is an nc simplex.
\end{cor}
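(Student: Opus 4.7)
The plan is to observe that this corollary is essentially an immediate consequence of Theorem \ref{thm:weak-exp-simplex} together with the Han--Paulsen characterization of nuclear operator systems cited just before the statement. The strategy is to show that nuclearity implies the weak expectation property (WEP) and then invoke the previous theorem.

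More concretely, let $S$ be a nuclear operator system. By \cite{HP2011}*{Theorem 3.5}, the bidual $S^{**}$ is unitally completely order isomorphic to an injective C*-algebra, so in particular $S^{**}$ is an injective operator system. Now take $T = S^{**}$ in the definition of WEP: the inclusion $S \subseteq T \subseteq S^{**}$ trivially holds (as the second inclusion is an equality), so $S$ has the weak expectation property. Applying Theorem \ref{thm:weak-exp-simplex} to $S$, we conclude that the nc state space of $S$ is an nc simplex.

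I do not expect any genuine obstacle here, since both inputs (the Han--Paulsen identification of the bidual of a nuclear operator system and Theorem \ref{thm:weak-exp-simplex}) are already available. The proof should read as a two-sentence deduction along the lines of: \emph{Since $S$ is nuclear, $S^{**}$ is injective by \cite{HP2011}*{Theorem 3.5}, and then $S \subseteq S^{**} \subseteq S^{**}$ exhibits $S$ as having the weak expectation property; the conclusion follows from Theorem \ref{thm:weak-exp-simplex}.}
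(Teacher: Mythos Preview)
Your argument is correct and matches the paper's approach exactly: the paper deduces the corollary immediately from Theorem \ref{thm:weak-exp-simplex} together with the observation (via \cite{HP2011}*{Theorem 3.5}) that nuclearity of $S$ makes $S^{**}$ injective, so $S$ has the weak expectation property.
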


It follows from Theorem \ref{thm:weak-exp-simplex} that the nc state space of a unital C*-algebra with the weak expectation property is an nc simplex. However, we will establish a much stronger result in Section \ref{sec:nc-bauer-simplices}. In the next example, we apply Corollary \ref{cor:nuclear-simplex} to give an example of an nc simplex that is not affinely homeomorphic to the nc state space of a C*-algebra.

\begin{example}
Han and Paulsen \cite{HP2011}*{Theorem 4.2} constructed the following example of a nuclear operator system that is not completely order isomorphic to a C*-algebra. Let $S \subseteq \B(\ell^2(\bN))$ denote the norm closure of
\[
\spn\{1,e_{ij} : (i,j) \in \bN^2 \setminus \{(1,1)\} \},
\]
where $\{e_{ij}\}_{i,j \in \bN}$ is the family of standard matrix units in $\B(\ell^2(\bN))$. Then $S$ is a closed operator subsystem of $\B(\ell^2(\bN))$ that not unitally completely order isomorphic to a C*-algebra. However, by \cite{HP2011}*{Theorem 4.3}, the bidual $S^{**}$ is unitally completely order isomorphic to the injective C*-algebra $\B(\ell^2(\bN))$. Hence by \cite{HP2011}*{Theorem 3.5}, $S$ is nuclear.

Letting $K$ denote the nc state space of $S$ so that $S$ is unitally completely order isomorphic to $\rA(K)$, it follows from Corollary \ref{cor:nuclear-simplex} that $K$ is an nc simplex. 

\end{example}

For a compact convex set $C$ let $\max(C)$ denote the largest compact nc convex set with $\max(C)_1 = C$. Equivalently, $\max(C)$ is the nc state space of the function system $\rA(C)$ of continuous affine functions on $C$ when it is equipped with its minimal operator system structure (cf. \cite{PTT2010}), i.e. when it is viewed as an operator subsystem of the C*-algebra $\rC(C)$ of continuous functions on $C$.

\begin{lem} \label{lem:criteria-max-nc-convex-set}
Let $K$ be a compact nc convex set. The following are equivalent:
\begin{enumerate}
\item $K = \max(K_1)$
\item $\partial K = \partial K_1$
\item $\cmin(\rA(K)) = \rC(\ol{\partial K_1})$
\end{enumerate}
\end{lem}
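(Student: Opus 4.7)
The plan is to prove the equivalences $(1) \Leftrightarrow (3)$ and $(2) \Leftrightarrow (3)$, with the commutativity of $\cmin(\rA(K))$ serving as the bridge. All three conditions are essentially saying that $\rA(K)$ carries the minimal operator system structure on the underlying function system $\rA(K_1)$, equivalently that $\rA(K)$ embeds as an operator subsystem of a commutative C*-algebra.

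For $(1) \Leftrightarrow (3)$: If $K = \max(K_1)$, then by definition $\rA(K) \cong \rA(K_1)$ with its minimal operator system structure, and so embeds into the commutative C*-algebra $\rC(K_1)$. Continuous affine functions separate points of $K_1$, so by Stone--Weierstrass they generate all of $\rC(K_1)$ as a unital C*-algebra. Classical Choquet theory identifies the Shilov boundary of $\rA(K_1)$ inside $K_1$ as $\overline{\partial K_1}$ (via a Hahn--Banach separation argument), so the C*-envelope is $\rC(\overline{\partial K_1})$; the universal property of $\cmin(\rA(K))$ identifies it with this C*-envelope, giving (3). Conversely, if $\cmin(\rA(K))$ is commutative, then the complete order embedding $\iota : \rA(K) \hookrightarrow \cmin(\rA(K))$ exhibits $\rA(K)$ as an operator subsystem of a commutative C*-algebra, forcing $\rA(K)$ to coincide with $\rA(K_1)$ equipped with its minimal operator system structure. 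The duality of Theorem~\ref{thm:nc_kadison} then yields $K = \max(K_1)$.

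For $(2) \Leftrightarrow (3)$: If (3) holds, commutativity of $\cmin(\rA(K))$ means every irreducible representation is one-dimensional. By the theorem quoted at the end of the ``Minimal C*-algebra'' subsection of the excerpt, an extreme point $x \in (\partial K)_n$ induces an irreducible representation $\delta_x$ of $\cmin(\rA(K))$, forcing $n = 1$; combined with the identification of level-$1$ nc extreme points with classical extreme points of $K_1$, we get $\partial K = \partial K_1$. Conversely, assume (2). The quoted theorem asserts that the nc state space $L$ of $\cmin(\rA(K))$ is the closed nc convex hull of $\{\delta_x : x \in \partial K\}$, and by hypothesis each such $\delta_x$ is a character. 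The nc analogue of Milman's theorem from \cite{DK2019} then ensures that every nc extreme point of $L$ lies in the closure of this set of characters, and is therefore itself one-dimensional. Thus every irreducible representation of $\cmin(\rA(K))$ is a character, so $\cmin(\rA(K))$ is commutative, and its Gelfand spectrum is exactly $\overline{\partial K_1}$.

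The main obstacle is the implication $(2) \Rightarrow (3)$, where one must use the nc Milman theorem to exclude new higher-dimensional irreducible representations from emerging in the closed nc convex hull of characters. Once commutativity of $\cmin(\rA(K))$ is in hand, identification with $\rC(\overline{\partial K_1})$ reduces to Gelfand duality. A subsidiary subtlety arises in $(1) \Rightarrow (2)$, in verifying that every classical extreme point of $K_1$ is nc-maximal in $\max(K_1)$: this can be checked by extending any dilation of $x \in \partial K_1$ to a $\cM_n$-valued measure $\nu$ on $\rC(K_1)$ via Arveson extension, and observing that since $\alpha^{*}\nu\alpha$ is a scalar probability measure on $K_1$ with barycenter the extreme point $x$, the dilation must split trivially.
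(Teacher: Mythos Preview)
Your overall strategy---using commutativity of $\cmin(\rA(K))$ as the bridge and proving $(1)\Leftrightarrow(3)$ and $(2)\Leftrightarrow(3)$ separately---is sound and differs from the paper's route, which instead passes through the intermediate condition ``the canonical map $\rA(K)\to\rC(\ol{\partial K_1})$ is a complete order embedding'' and then invokes \cite{DK2015}*{Theorems~2.4 and~3.4} to obtain $(2)\Leftrightarrow(3)$ in one stroke. Your $(1)\Leftrightarrow(3)$ argument and the ``subsidiary subtlety'' discussion for $\partial K_1\subseteq\partial K$ are essentially correct.

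There is, however, a genuine gap in your $(2)\Rightarrow(3)$ step. The nc Milman theorem from \cite{DK2019} requires the generating set to be closed under direct sums (and unitary conjugation). Your set $\{\delta_x:x\in\partial K_1\}$ lives entirely at level~$1$, so before applying nc Milman you must enlarge it to include all finite direct sums of characters; but then the level-wise closure of this enlarged set contains points at every level~$n$, and an extreme point of $L$ at level~$n>1$ could in principle be a limit of unitary conjugates of such direct sums. The conclusion ``therefore itself one-dimensional'' does not follow from nc Milman alone. The fix is to bypass nc Milman entirely: by the construction of the C*-envelope (cf.\ the proof of \cite{DK2019}*{Theorem~6.5.1}, also used in Proposition~\ref{prop:closure-ext-bdy-spec-top}), one has $\bigcap_{x\in\partial K}\ker\delta_x=0$ in $\cmin(\rA(K))$. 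Hence $\bigoplus_{x\in\partial K}\delta_x$ is a faithful representation of $\cmin(\rA(K))$ into $\prod_{x\in\partial K}\bC$, and commutativity follows immediately. With this replacement your argument is complete.
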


\begin{proof}
By the duality between the category of compact nc convex sets and operator systems, $K = \max(K_1)$ if and only if $\rA(K) = \rA(\max(K_1))$. Since $\rA(\max(K_1))$ is the operator system obtained by equipping the function system $\rA(K_1)$ with its minimal operator system structure, i.e. with the operator system structure obtained from the inclusion $\rA(K_1) \subseteq \rC(K_1)$, it follows that $\rA(K) = \rA(\max(K_1))$ if and only if the canonical map from $\rA(K)$ into $\rC(\ol{\partial K_1})$ is a unital complete order embedding, which is equivalent to the statement that $\cmin(\rA(K))$ is a quotient of $\rC(\ol{\partial K_1})$. 

By \cite{DK2015}*{Theorem 2.4}, every point in $\partial K_1$ can be dilated to an extreme point of $K$. It follows from \cite{DK2015}*{Theorem 3.4}, that $\cmin(\rA(K))$ is a quotient of $\rC(\ol{\partial K_1})$ if and only if $\cmin(\rA(K)) = \rC(\ol{\partial K_1})$, and that this is equivalent to $\partial K = \partial K_1$.
\end{proof}

The next result shows that every simplex can be naturally identified with an nc simplex.

\begin{thm} \label{thm:nc-criterion-classical-simplex}
Let $K$ be a compact nc convex set. Then $K$ is an nc simplex with $\partial K = \partial K_1$ if and only if $K_1$ is a simplex and $K = \max(K_1)$.
\end{thm}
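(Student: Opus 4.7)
The plan is to reduce both directions to Theorem \ref{thm:characterization-nc-simplex-expectation} together with its classical counterpart, routing everything through the commutative intermediary $\rC(\ol{\partial K_1})$. First I would apply Lemma \ref{lem:criteria-max-nc-convex-set} to note that $\partial K = \partial K_1$, $K = \max(K_1)$ and $\cmin(\rA(K)) = \rC(\ol{\partial K_1})$ are equivalent, so both implications reduce, under any one of these conditions, to the equivalence ``$K$ is an nc simplex if and only if $K_1$ is a classical simplex''. The commutative C*-algebra $\rC(\ol{\partial K_1})$ sits between $\rC(K)$ and $\rC(K_1)$ via the canonical surjection $\pi : \rC(K) \twoheadrightarrow \cmin(\rA(K))$ (from the universal property of $\cmin$) and the Tietze restriction $\rho : \rC(K_1) \twoheadrightarrow \rC(\ol{\partial K_1})$, and this is the setup I would exploit.

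The key auxiliary input is the classical counterpart of Theorem \ref{thm:characterization-nc-simplex-expectation}: a compact convex set $C$ is a simplex if and only if there exists a unital positive $\Psi : \rC(C) \to \rA(C)^{**}$ extending $\id_{\rA(C)}$. This is essentially a restatement of Choquet's uniqueness theorem (cf.\ \cite{Alf1971}), and in any case it can be proved by the same argument that establishes Theorem \ref{thm:characterization-nc-simplex-expectation}, with scalar representing measures in place of matrix-valued representing maps. I would regard this as the main obstacle; once it is granted, the remainder of the proof is a diagram chase.

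For the forward direction, suppose $K$ is an nc simplex. Theorem \ref{thm:characterization-nc-simplex-expectation} yields a UCP map $\phi : \rC(K) \to \rA(K)^{**}$ extending $\id_{\rA(K)}$ and factoring through $\cmin(\rA(K)) = \rC(\ol{\partial K_1})$; write the factored map as $\psi : \rC(\ol{\partial K_1}) \to \rA(K)^{**}$. Then $\Psi := \psi \circ \rho$ is a unital positive extension of $\id_{\rA(K_1)}$ from $\rC(K_1)$ to $\rA(K_1)^{**} = \rA(K)^{**}$, so $K_1$ is a simplex by the classical counterpart. For the backward direction, suppose $K_1$ is a simplex, and let $\Psi : \rC(K_1) \to \rA(K_1)^{**}$ be the resulting extension of $\id_{\rA(K_1)}$ given by integration against the unique maximal representing measures $m_x$. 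Because each $m_x$ is supported on $\ol{\partial K_1}$ in the Baire sense, $\Psi(f)$ depends only on $f|_{\ol{\partial K_1}}$, so $\Psi$ factors as $\psi \circ \rho$ for some unital positive $\psi : \rC(\ol{\partial K_1}) \to \rA(K_1)^{**}$, which is automatically CP since its domain is commutative. Then $\phi := \psi \circ \pi$ is a UCP extension of $\id_{\rA(K)}$ from $\rC(K)$ to $\rA(K)^{**}$, so $K$ is an nc simplex by Theorem \ref{thm:characterization-nc-simplex-expectation}.
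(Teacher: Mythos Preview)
Your argument is correct and takes a genuinely different route from the paper. The paper argues via nuclearity and injectivity: in the forward direction it uses the conditional expectation $\phi^{**}$ to realize $\rA(K_1)^{**}$ as the range of a projection from the commutative von Neumann algebra $\rC(\ol{\partial K_1})^{**}$, whence $\rA(K_1)^{**}$ is injective, $\rA(K_1)$ is nuclear by \cite{HP2011}, and $K_1$ is a simplex by Namioka--Phelps \cite{NP1969}; in the backward direction it quotes Effros \cite{Eff1972} to get injectivity of $\rA(K_1)^{**}$, hence of $\rA(K)^{**}$, hence nuclearity of $\rA(K)$, and concludes via Corollary~\ref{cor:nuclear-simplex}. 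Your approach instead isolates a classical analogue of Theorem~\ref{thm:characterization-nc-simplex-expectation} and transfers it to the nc setting by a diagram chase through $\rC(\ol{\partial K_1})$. This has the virtue of making the parallel between the classical and nc theories completely explicit, and the factorization step in your backward direction (that $\Psi$ descends to $\rC(\ol{\partial K_1})$ because $\{f\neq 0\}$ is a cozero, hence Baire, set disjoint from $\partial K_1$) is clean. On the other hand, the black box you invoke --- that $C$ is a simplex iff such a $\Psi$ exists --- is essentially equivalent in strength to the Namioka--Phelps/Effros characterizations the paper cites; in particular the implication ``$\Psi$ exists $\Rightarrow$ simplex'' is most easily proved by showing $\rA(C)^{**}$ inherits a commutative von Neumann algebra structure from $\rC(C)^{**}$ via $\Psi^{**}$, which is exactly the paper's mechanism. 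So your route is a repackaging rather than a genuine shortcut, but a conceptually pleasant one.
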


\begin{proof}
Suppose that $K$ is an nc simplex with $\partial K = \partial K_1$. Then by Lemma \ref{lem:criteria-max-nc-convex-set}, $K = \max(K_1)$ and $\cmin(\rA(K)) = \rC(\ol{\partial K_1})$. In particular, $\cmin(\rA(K))$ is a commutative C*-algebra. Let $\phi : \rC(K) \to \rA(K)^{**}$ denote the map from Theorem \ref{thm:characterization-nc-simplex-expectation}. Since $\phi$ factors through $\cmin(\rA(K))$, the normal conditional expectation $\phi^{**}~:~\rC(K)^{**} \to \rA(K)^{**}$ gives rise to a conditional expectation $\rC(\ol{\partial K_1})^{**} \to \rA(K_1)^{**}$. Since $\rC(\ol{\partial K_1})^{**}$ is a commutative von Neumann algebra, it is injective. It follows that $\rA(K_1)^{**}$ is injective, and hence by \cite{HP2011}*{Theorem 3.5}, $\rA(K_1)$ is nuclear. Therefore, by \cite{NP1969}*{Theorem 1.4}, $K_1$ is a simplex. 

Conversely, suppose that $K_1$ is a simplex and $K = \max(K_1)$. Since $K_1$ is a simplex, \cite{Eff1972}*{Theorem 7.4} implies that the bidual $\rA(K_1)^{**}$ is an injective function system, and hence is unitally order isomorphic to a commutative von Neumann algebra. Therefore, $\rA(K)^{**}$ is unitally completely order isomorphic to a commutative von Neumann algebra, and in particular is injective as an operator system. By \cite{HP2011}*{Theorem 3.5}, $\rA(K)$ is nuclear, so it follows from Corollary \ref{cor:nuclear-simplex} that $K$ is an nc simplex.
\end{proof}

\section{C*-systems} \label{sec:c-star-systems}

Recall that the category of compact nc convex sets with morphisms consisting of continuous affine nc maps is dual to the category of closed operator systems with morphisms consisting of unital complete order homomorphisms (see \cite{DK2019}*{Section 3}). This duality is implemented by the functor mapping a compact nc convex set $K$ to the operator system $\rA(K)$ of continuous affine nc functions on $K$.

In this section we will characterize nc simplices in terms of their corresponding operator systems of continuous affine nc functions. Specifically, we will show that a compact nc convex set is an nc simplex if and only if it is dual to a special kind of operator system introduced by Kirchberg and Wasserman \cite{KW1998} called a C*-system.

\begin{defn}[C*-system]
An operator system $S$ is said to be a {\em C*-system} if its bidual $S^{**}$ is unitally completely order isomorphic to a von Neumann algebra.
\end{defn}

\begin{thm} \label{thm:characterization-nc-simplex-c-star-system}
Let $K$ be a compact nc convex set. Then $K$ is an nc simplex if and only if the corresponding operator system $\rA(K)$ of continuous affine nc functions on $K$ is a C*-system.
\end{thm}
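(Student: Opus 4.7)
The plan is to apply Theorem \ref{thm:characterization-nc-simplex-expectation}, which reduces the statement to: $\rA(K)^{**}$ is unitally completely order isomorphic to a von Neumann algebra if and only if there is a unital completely positive map $\phi : \rC(K) \to \rA(K)^{**}$ restricting to the identity on $\rA(K)$. The two directions then come from a Choi-Effros projection argument on one side, and the universal property of $\rC(K)$ with respect to u.c.p.\ maps out of $\rA(K)$ on the other.

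For the forward direction, starting from an nc simplex $K$, Theorem \ref{thm:characterization-nc-simplex-expectation} supplies a normal conditional expectation $\phi^{**} : \rC(K)^{**} \to \rA(K)^{**}$ onto $\rA(K)^{**}$ inside the von Neumann algebra $\rC(K)^{**}$. The plan is to invoke the Choi-Effros projection theorem: endowing $\rA(K)^{**}$ with the product $a \cdot b := \phi^{**}(ab)$, and the inherited norm and involution, makes it a C*-algebra, and this new C*-structure is unitally completely order isomorphic to the original operator system structure. The remaining task is to upgrade ``C*-algebra'' to ``von Neumann algebra'': since $\phi^{**}$ is normal and the multiplication on $\rC(K)^{**}$ is separately $\sigma$-weakly continuous, the new product is also separately $\sigma$-weakly continuous, and $\rA(K)^{**}$ is already a dual Banach space, so Sakai's theorem delivers the von Neumann algebra structure. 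Hence $\rA(K)$ is a C*-system.

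For the reverse direction, I assume a unital complete order isomorphism $\iota : \rA(K)^{**} \to N$ with $N$ a von Neumann algebra. I faithfully and normally represent $N \subseteq \B(H)$ on some Hilbert space $H$ of admissible dimension, and form the u.c.p.\ map $\psi : \rA(K) \hookrightarrow \rA(K)^{**} \xrightarrow{\iota} N \subseteq \B(H)$, which is a point of $K_{\dim H}$. The associated point evaluation $\delta_\psi : \rC(K) \to \B(H)$ from Definition \ref{defn:point-evaluation} is a $*$-homomorphism extending $\psi$, and its image is the C*-subalgebra of $\B(H)$ generated by $\psi(\rA(K))$, which is already contained in $N$. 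Setting $\phi := \iota^{-1} \circ \delta_\psi$ therefore produces a u.c.p.\ map $\rC(K) \to \rA(K)^{**}$ with $\phi|_{\rA(K)} = \id_{\rA(K)}$, and Theorem \ref{thm:characterization-nc-simplex-expectation} concludes that $K$ is an nc simplex.

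The main obstacle I anticipate is the forward direction's upgrade from a C*-algebra to a von Neumann algebra structure on $\rA(K)^{**}$: the bare Choi-Effros C*-structure is standard, but confirming that normality of $\phi^{**}$ really yields separate $\sigma$-weak continuity of the Choi-Effros product (so that Sakai's theorem applies) is the delicate step. A secondary subtlety in the reverse direction is arranging $H$ to have dimension at most the fixed cardinal $\kappa$, which I take care of by the standing assumption that $\kappa$ is chosen sufficiently large.
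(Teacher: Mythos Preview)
Your proposal is correct and follows essentially the same approach as the paper: the forward direction is identical (Choi--Effros product on the range of the normal conditional expectation, then Sakai), and your reverse direction via a concrete representation and the point evaluation $\delta_\psi$ is just a restatement of the paper's appeal to the universal property of $\rC(K)$, which produces a $*$-homomorphism $\pi : \rC(K) \to \ca(\psi(\rA(K)))$ directly without needing to embed $N$ in some $\B(H)$. The only cosmetic difference is that the paper's abstract formulation sidesteps the $\kappa$ issue you flag, and your separate $\sigma$-weak continuity step in the forward direction is unnecessary since Sakai only requires the existence of a predual, which $\rA(K)^{**}$ already has.
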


\begin{proof}
Suppose $K$ is an nc simplex and let $\phi^{**} : \rC(K)^{**} \to \rA(K)^{**}$ denote the normal conditional expectation from Theorem \ref{thm:characterization-nc-simplex-expectation}. The range $\rA(K)^{**}$ of $\phi^{**}$ is a C*-algebra with respect to the corresponding Choi-Effros product \cite{CE1977}*{Theorem 3.1}. Since $\rA(K)^{**}$ is weak*-closed, it follows from Sakai's characterization of von Neumann algebras as C*-algebras with preduals that this C*-algebra is a von Neumann algebra.

Conversely, suppose that $\rA(K)$ is a C*-system. Then there is a von Neumann algebra $M$ and a unital complete order isomorphism $\psi : \rA(K)^{**} \to M$. Let $B = \ca(\psi(\rA(K)))$. By the universal property of $\rC(K)$, there is a surjective homomorphism $\pi : \rC(K) \to B$ such that $\pi|_{\rA(K)} = \psi|_{\rA(K)}$. Let $\phi = \psi^{-1} \circ \pi$. Then $\phi : \rC(K) \to \rA(K)^{**}$ is a unital completely positive map satisfying $\phi|_{\rA(K)} = \id_{\rA(K)}$. Hence by Theorem \ref{thm:characterization-nc-simplex-expectation}, $K$ is an nc simplex. 
\end{proof}

The next example is due to Kirchberg and Wassermann \cite{KW1998}.

\begin{example}[Kirchberg-Wassermann]
Let $A$ be a unital C*-algebra and let $L$ be a closed left ideal in $A$. Then $L + L^*$ is a closed subspace of $A$ and there is a natural operator system structure on the quotient $A/(L + L^*)$ obtained from the canonical isometric embedding of $A / (L + L^*)$ into the operator system $(1-p)A^{**}(1-p)$, where $p \in A^{**}$ denotes the support projection of $L$. Furthermore, $(A/(L+L^*))^{**}$ is unitally completely order isomorphic to the C*-algebra $(1-p)A^{**}(1-p)$. Hence $A/(L+L^*)$ is a C*-system. Kirchberg and Wassermann showed that every separable C*-system is of this form \cite{KW1998}*{Proposition 5}, and that every non-separable C*-system is an inductive limit of separable C*-systems.
\end{example}

\begin{cor} \label{cor:phi-homomorphism}
Let $K$ be an nc simplex and let $\phi : \rC(K) \to \rA(K)^{**}$ denote the unital completely positive map from Theorem \ref{thm:characterization-nc-simplex-expectation}. Then $\phi$ is a homomorphism with respect to the von Neumann algebra structure on $\rA(K)^{**}$ from Theorem \ref{thm:characterization-nc-simplex-c-star-system}.
\end{cor}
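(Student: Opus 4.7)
The plan is to establish multiplicativity $\phi(fg) = \phi^{**}(\phi(f)\phi(g))$ for $f,g \in \rC(K)$. Combined with the UCP property (which gives $\phi(f^*) = \phi(f)^*$) and the fact that the Choi--Effros involution on $\rA(K)^{**}$ coincides with the ambient one, this will show that $\phi$ is a $*$-homomorphism. My strategy is to verify the identity pointwise at each $x \in K$ by lifting the computation to a genuine $*$-homomorphism attached to a maximal point dilating $x$.

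For $x \in K_n$, I would choose a minimal representation $(y, \alpha) \in K_m \times \cM_{m,n}$ of the unique nc Choquet maximal map $\mu_x$. By \cite{DK2019}*{Theorem 8.3.7}, $y$ is then a maximal point, so $\delta_y : \rC(K) \to \cM_m$ is the unique representing map for $y$ and in particular a $*$-homomorphism, with $\mu_y = \delta_y$. The crux is the identity $\delta_y^{**}(\phi(f)) = \delta_y(f)$ for $f \in \rC(K)$: it follows by combining the fact that $\delta_y^{**}$ restricts to $y^{**}$ on $\rA(K)^{**}$ (since $\delta_y|_{\rA(K)} = y$) with Corollary \ref{cor:formula-affinization-map}, which gives $y^{**}(\phi(f)) = \phi(f)(y) = \mu_y(f)$, and the maximality of $y$, which gives $\mu_y = \delta_y$.

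Chaining this with the pointwise formula $\phi^{**}(F)(x) = \mu_x^{**}(F) = \alpha^* \delta_y^{**}(F) \alpha$ (obtained by normally extending $\mu_x = x^{**} \circ \phi = \alpha^* \delta_y \alpha$), and using multiplicativity of $\delta_y^{**}$ on $\rC(K)^{**}$, I obtain
\[
\phi^{**}(\phi(f)\phi(g))(x) = \alpha^* \delta_y^{**}(\phi(f))\,\delta_y^{**}(\phi(g))\,\alpha = \alpha^* \delta_y(f)\delta_y(g)\alpha = \alpha^* \delta_y(fg)\alpha = \mu_x(fg) = \phi(fg)(x).
\]
Since $x$ is arbitrary, multiplicativity is proved. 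The main obstacle is simply the careful bookkeeping of normal bidual extensions --- in particular confirming the identity $\mu_x^{**} = x^{**} \circ \phi^{**}$ and that $\delta_y^{**}|_{\rA(K)^{**}} = y^{**}$ --- but each such identity follows from uniqueness of normal extensions of UCP maps together with Corollary \ref{cor:formula-affinization-map}.
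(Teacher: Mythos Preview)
Your argument is correct but takes a genuinely different route from the paper. The paper's proof is short and structural: since $\rA(K)^{**}$ is completely order isomorphic to a von Neumann algebra $M$ via some $\psi$, one invokes the universal property of $\rC(K) = \cmax(\rA(K))$ to produce a surjective $*$-homomorphism $\pi : \rC(K) \to \ca(\psi(\rA(K))) \subseteq M$ extending $\psi|_{\rA(K)}$; then $\psi^{-1}\circ\pi$ is a unital completely positive map restricting to the identity on $\rA(K)$, so by the uniqueness in Theorem~\ref{thm:characterization-nc-simplex-expectation} it equals $\phi$, and multiplicativity with respect to the product transported from $M$ is immediate. Your approach instead verifies the Choi--Effros multiplicativity $\phi(fg)=\phi^{**}(\phi(f)\phi(g))$ pointwise, lifting each $x\in K$ to a maximal dilation $y$ and exploiting that $\delta_y$ and its normal extension are genuine $*$-homomorphisms. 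This is more hands-on and makes the role of maximal points explicit; the paper's argument is cleaner and sidesteps the bidual bookkeeping entirely, but yours gives a concrete geometric picture of why $\phi$ is multiplicative and does not appeal to the universal property of $\rC(K)$.
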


\begin{proof}
If $K$ is an nc simplex, then arguing as in the proof of Theorem \ref{thm:characterization-nc-simplex-c-star-system}, there is a von Neumann algebra $M$ and a unital complete order isomorphism $\psi : \rA(K)^{**} \to M$. Let $B = \ca(\psi(\rA(K)))$. By the universal property of $\rC(K)$, there is a surjective homomorphism $\pi : \rC(K) \to B$ such that $\pi|_{\rA(K)} = \psi|_{\rA(K)}$. The unital completely positive map $\phi' : \rC(K) \to \rA(K)^{**}$ defined by $\phi' = \psi^{-1} \circ \pi$ satisfies $\phi'|_{\rA(K)} = \id_{\rA(K)}$. Hence by Theorem \ref{thm:characterization-nc-simplex-expectation}, $\phi' = \phi$.
\end{proof}

It follows immediately from Theorem \ref{thm:characterization-nc-simplex-c-star-system} that the nc state space of a unital C*-algebra is an nc simplex. In Section \ref{sec:nc-bauer-simplices}, we will completely characterize nc simplices that arise in this way.

\begin{cor} \label{cor:nc-state-space-c-star-alg-prelim}
The nc state space of a unital C*-algebra is an nc simplex. 
\end{cor}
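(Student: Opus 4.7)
The plan is to deduce this directly from the characterization of nc simplices via C*-systems in Theorem \ref{thm:characterization-nc-simplex-c-star-system}. Let $A$ be a unital C*-algebra and let $K$ denote its nc state space, so that by the categorical duality of Theorem \ref{thm:nc_kadison} we have a unital complete order isomorphism $A \cong \rA(K)$.

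First, I would use this identification to pass to biduals: $\rA(K)^{**} \cong A^{**}$ as operator systems. Since $A$ is a C*-algebra, its bidual $A^{**}$ is a von Neumann algebra in the usual way (Sakai's theorem identifies it with the enveloping von Neumann algebra of $A$), and the C*-algebra structure on $A^{**}$ implements its operator system structure. Therefore $\rA(K)^{**}$ is unitally completely order isomorphic to a von Neumann algebra, which is precisely the definition of $\rA(K)$ being a C*-system.

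Applying Theorem \ref{thm:characterization-nc-simplex-c-star-system} then yields immediately that $K$ is an nc simplex.

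There is no real obstacle here; the entire content is encoded in Theorem \ref{thm:characterization-nc-simplex-c-star-system} together with the well-known fact that the bidual of a C*-algebra is a von Neumann algebra whose operator system structure agrees with the one coming from the bidual as an operator system. The only thing to be slightly careful about is to confirm that the operator system bidual of $A$ (viewed as an operator system) coincides with the C*-algebraic bidual — but this is standard, since on a C*-algebra both constructions yield the enveloping von Neumann algebra with its canonical matrix order.
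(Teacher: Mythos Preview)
Your proposal is correct and matches the paper's approach exactly: the paper states that the corollary ``follows immediately from Theorem~\ref{thm:characterization-nc-simplex-c-star-system},'' and your argument spells out precisely this implication by noting that $A^{**}$ is a von Neumann algebra so that $\rA(K)$ is a C*-system.
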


For $n \geq 1$, a compact convex set $C$ is an $n$-simplex if and only if the function system $\rA(C)$ is unitally order isomorphic to $\ell^\infty_n$, which is a finite-dimensional C*-algebra. The next result is a noncommutative generalization of this fact.

\begin{cor} \label{cor:fin-dim-choquet-simplex-c-star-alg}
Let $K$ be compact nc convex set such that $\rA(K)$ is finite dimensional. Then $K$ is an nc simplex if and only if $\rA(K)$ is unitally completely order isomorphic to a unital C*-algebra.
\end{cor}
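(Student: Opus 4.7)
The plan is to observe that both directions follow almost immediately from Theorem \ref{thm:characterization-nc-simplex-c-star-system} together with the reflexivity of finite-dimensional spaces, with essentially no calculation required.

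For the ``if'' direction, I would appeal directly to Corollary \ref{cor:nc-state-space-c-star-alg-prelim}. If $\rA(K)$ is unitally completely order isomorphic to a unital C*-algebra $A$, then by the duality in Theorem \ref{thm:nc_kadison}, $K$ is affinely homeomorphic to the nc state space of $A$, which is an nc simplex by Corollary \ref{cor:nc-state-space-c-star-alg-prelim}. Since being an nc simplex is preserved under affine homeomorphism (it is defined in terms of the structure of $\rC(K)$, which is determined up to isomorphism by $\rA(K)$ via the functor of Theorem \ref{thm:nc_kadison}), $K$ is an nc simplex.

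For the ``only if'' direction, suppose $K$ is an nc simplex. By Theorem \ref{thm:characterization-nc-simplex-c-star-system}, $\rA(K)$ is a C*-system, which means $\rA(K)^{**}$ is unitally completely order isomorphic to a von Neumann algebra $M$. Now I use the hypothesis that $\rA(K)$ is finite dimensional: any finite-dimensional normed space is reflexive, so the canonical inclusion $\rA(K) \hookrightarrow \rA(K)^{**}$ is a unital complete order isomorphism. Composing with the identification $\rA(K)^{**} \cong M$ gives a unital complete order isomorphism from $\rA(K)$ onto $M$, which is in particular a unital C*-algebra. (In fact $M$ must then be finite dimensional as well, hence a finite direct sum of matrix algebras, but this refinement is not needed for the statement.)

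There is really no main obstacle here, since the heavy lifting was already done in Theorem \ref{thm:characterization-nc-simplex-c-star-system}; the only ingredient beyond that theorem is the reflexivity of finite-dimensional operator systems, which makes ``$\rA(K)^{**}$ is a von Neumann algebra'' collapse to ``$\rA(K)$ is a von Neumann algebra.'' The only point requiring a moment of care is to confirm that the unital complete order isomorphism between $\rA(K)$ and its bidual (as operator systems) is compatible with the one furnished by the C*-system structure, but this is automatic since both are instances of the canonical embedding into the bidual.
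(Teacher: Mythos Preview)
Your proof is correct and takes essentially the same approach as the paper's: the paper simply states that the result follows immediately from Corollary~\ref{cor:nc-state-space-c-star-alg-prelim} together with the fact that $\rA(K)^{**} = \rA(K)$ when $\rA(K)$ is finite dimensional, which is exactly your argument spelled out in more detail.
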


\begin{proof}
The result follows immediately from Corollary \ref{cor:nc-state-space-c-star-alg-prelim} and the fact that $\rA(K)^{**} = \rA(K)$ if $\rA(K)$ is finite dimensional.
\end{proof}

\section{Tensor products} \label{sec:tensor-products}

Namioka and Phelps \cite{NP1969} characterized simplices in terms of tensor products. Specifically, they proved that a compact convex set $C$ is a simplex if and only if the corresponding function system $\rA(C)$ of continuous affine functions on $C$ is nuclear, meaning that $\rA(C) \tmin F = \rA(C) \tmax F$ for every function system $F$. Here, $\rA(C) \tmin F$ and $ \rA(C) \tmax F$ denote the minimal and maximal tensor products respectively of the function systems $\rA(C)$ and $F$ in the category of function systems (see e.g. \cites{BW1970,NP1969}).

In this section, we will establish an analogous characterization of nc simplices in terms of tensor products. We will utilize the theory of tensor products of operator systems developed by Kavruk, Paulsen, Todorov and Tomforde \cite{KPTT2011}.

For operator systems $S$ and $T$, an operator system tensor product is an operator system structure on the algebraic tensor product $S \otimes T$, i.e. a sequence of positive cones $\{C_n \subseteq \cM_n(S \otimes T) \}_{n \in \bN}$. If $R$ is an operator system containing the algebraic tensor product $S \otimes T$, then we obtain an operator system structure on $S \otimes T$ by taking $C_n = \cM_n(R)^+$ for $n \in \bN$.

\begin{defn}
Let $S$ and $T$ be operator systems with nc state spaces $K$ and $L$ respectively.
\begin{enumerate}
\item The {\em minimal tensor product} $S \tmin T$ is the operator system with positive cones $\{C_n^{\mathrm{min}} \subseteq \cM_n(S \otimes T) \}_{n \in \bN}$ defined by
\begin{align*}
C_n^{\mathrm{min}} = \{ &a \in \cM_n(S \otimes T) : a(x \otimes y) \geq 0 \text{ for all } x \in K, \ y \in L \}.
\end{align*}

\item The {\em commuting tensor product} $S \tcomm T$ is the operator system with positive cones $\{C_n^{\mathrm{c}} \subseteq \cM_n(S \otimes T) \}_{n \in \bN}$ defined by
\begin{align*}
C_n^{\mathrm{c}} = \{ &a \in \cM_n(S \otimes T) : a(x \otimes y) \geq 0 \text{ for all } x \in K, \ y \in L \\
& \quad \text{ such that } S(x) \text{ and } T(y) \text{ commute} \}.
\end{align*}

\item The {\em maximal tensor product} $S \tmin T$ is the operator system with positive cones $\{C_n^{\mathrm{max}} \subseteq \cM_n(S \otimes T) \}_{n \in \bN}$ defined by
\begin{align*}
C_n^{\mathrm{max}} = \{ &\alpha(s \otimes t)\alpha^* : s \in \cM_l (S)^+,\ t \in \cM_m(T)^+, \\
& \quad \alpha \in \cM_{n,km},\ l,m \in \bN \}.
\end{align*}

\end{enumerate}
\end{defn}

The following tensor product characterization of C*-systems was established by Kavruk \cite{Kav2018}*{Theorem 2.3}.

\begin{thm}[Kavruk] \label{thm:kavruk}
An operator system $S$ is a C*-system if and only if it is (c,max)-nuclear, meaning that $S \tcomm T = S \tmax T$ for every operator system $T$. Equivalently, if and only if $\cmax(S) \tmax \cmax(T) = \cmax(S \tmax T)$ for every operator system $T$.
\end{thm}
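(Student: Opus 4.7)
My plan is to first reduce the two formulations to a single statement via the universal property of $\cmax$, and then handle each implication separately. For the equivalence of formulations, applying the universal property of $\cmax$ shows that unital $*$-representations $\cmax(S \tcomm T) \to \B(H)$ correspond to ucp maps $S \tcomm T \to \B(H)$, which by definition of $\tcomm$ correspond to pairs of ucp maps $S, T \to \B(H)$ with commuting ranges, which lift to pairs of $*$-homomorphisms $\cmax(S), \cmax(T) \to \B(H)$ with commuting ranges, which are exactly the $*$-representations of $\cmax(S) \tmax \cmax(T)$ (since the max tensor product of C*-algebras classifies commuting representations). Hence $\cmax(S \tcomm T) = \cmax(S) \tmax \cmax(T)$ canonically, and (c,max)-nuclearity of $S$ becomes the condition $\cmax(S \tmax T) = \cmax(S) \tmax \cmax(T)$ for every operator system $T$.

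For the forward direction, suppose $S$ is a C*-system, so that $S^{**}$ is a von Neumann algebra $N$. Fix an operator system $T$. Since $N$ is a C*-algebra, we have $N \tcomm T = N \tmax T$ by the Kavruk--Paulsen--Todorov--Tomforde theorem for the C*-algebra case. The plan is to transfer this equality to $S$ by exploiting the unital complete order embedding $S \hookrightarrow N$. Given $u \in \cM_n(S \otimes T)$ positive in the $c$-tensor product, its image under the canonical map $S \tcomm T \to N \tcomm T$ is positive, hence positive in $N \tmax T$; using the normality of the bidual embedding, I would argue that $u$ is already positive when viewed in $S \tmax T$. Concretely, the inclusion $S \tmax T \hookrightarrow (S \tmax T)^{**}$ is a unital complete order embedding, and composing with the natural weak$^*$-continuous map $(S \tmax T)^{**} \to N \tmax T$ gives a map that agrees with the ucp inclusion induced by $S \hookrightarrow N$ on the algebraic tensor product; chasing positivity through this diagram yields what we want. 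The main obstacle is that max tensor products are not functorial under general complete order embeddings, so the delicate use of weak$^*$-density and the normality of the bidual inclusion is essential here.

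For the reverse direction, assume $S$ is (c,max)-nuclear. Using the equivalent formulation $\cmax(S \tmax T) = \cmax(S) \tmax \cmax(T)$, I would construct a von Neumann algebra structure on $S^{**}$. The strategy is to exhibit a normal conditional expectation from $\cmax(S)^{**}$ onto $S^{**}$; once this exists, the Choi--Effros product on $S^{**}$ is associative and compatible with the dual operator system structure, and Sakai's characterization then identifies $S^{**}$ with a von Neumann algebra. To produce this expectation, I would test the tensor identity against a sufficiently universal operator system $T$, for instance $T = \cmax(S)$ or a suitable injective envelope, to extract a ucp splitting of the natural quotient $\cmax(S) \to S^{**}$, and then apply $(-)^{**}$ together with Tomiyama's theorem to promote this splitting to a normal conditional expectation with range $S^{**}$.
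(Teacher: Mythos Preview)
The paper does not prove this theorem at all: it is quoted verbatim from Kavruk \cite{Kav2018}*{Theorem 2.3} and used as a black box to derive Corollary~\ref{cor:tensor-product-char-nc-simplices}. So there is no ``paper's own proof'' to compare against, and your proposal has to stand on its own.

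Your reduction of the two formulations is essentially right: the identification $\cmax(S \tcomm T) \cong \cmax(S) \tmax \cmax(T)$ via commuting pairs of representations is standard, and since an operator system structure is recovered from its embedding into its maximal C*-algebra, equality of the enveloping C*-algebras on the common algebraic tensor forces $S \tcomm T = S \tmax T$.

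The forward direction has a real gap. You correctly isolate the obstruction: $\tmax$ is not injective along complete order embeddings, so knowing $N \tcomm T = N \tmax T$ for $N = S^{**}$ does not automatically descend to $S$. Your proposed fix is to pass through $(S \tmax T)^{**}$ and a ``natural weak*-continuous map'' to $N \tmax T$, but you never say what that map is or why it is a complete order embedding on $S \tmax T$; the phrase ``chasing positivity through this diagram'' is exactly where the argument is missing. What you actually need is that $S \tmax T \hookrightarrow S^{**} \tmax T$ is a complete order embedding, and this is a nontrivial lemma (it uses that positivity in $\tmax$ is detected by completely positive maps into matrix algebras on one tensor factor, together with weak*-density of $S$ in $S^{**}$). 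Without stating and proving that lemma, the forward direction is incomplete.

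The reverse direction is not yet a proof sketch. You assert that testing the identity $\cmax(S \tmax T) = \cmax(S) \tmax \cmax(T)$ against ``a sufficiently universal $T$'' will produce a ucp splitting $\cmax(S) \to S^{**}$ of the inclusion $S \hookrightarrow \cmax(S)$, but you do not say which $T$, nor how the tensor identity yields such a splitting. Once such a splitting exists, the rest of your outline (Choi--Effros product on $S^{**}$, Sakai's theorem) is exactly the mechanism the paper uses in Theorem~\ref{thm:characterization-nc-simplex-c-star-system}, so that part is fine; but the step that extracts the splitting from $(\mathrm{c},\max)$-nuclearity is the entire content of this direction and is absent.
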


The next corollary follows immediately from Theorem \ref{thm:characterization-nc-simplex-c-star-system} and Theorem \ref{thm:kavruk}.

\begin{cor} \label{cor:tensor-product-char-nc-simplices}
Let $K$ be a compact nc convex set. Then $K$ is an nc simplex if and only if the operator system $\rA(K)$ is $(\operatorname{c},\operatorname{max})$-nuclear, meaning that $\rA(K) \tcomm \rA(L) = \rA(K) \tmax \rA(L)$ for every  compact nc convex set $L$. Equivalently, if and only if $\rC(K) \tmax \rC(L) = \cmax(\rA(K) \tmax \rA(L))$ for every compact nc convex set $L$.
\end{cor}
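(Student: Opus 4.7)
The plan is to follow the hint given by the authors and simply assemble Theorem \ref{thm:characterization-nc-simplex-c-star-system} and Theorem \ref{thm:kavruk}, using the categorical duality between compact nc convex sets and operator systems (Theorem \ref{thm:nc_kadison}) to translate the universal quantifier from ``for every operator system $T$'' to ``for every compact nc convex set $L$''.

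For the first equivalence, I would chain the biconditionals: $K$ is an nc simplex $\iff$ $\rA(K)$ is a C*-system (by Theorem \ref{thm:characterization-nc-simplex-c-star-system}) $\iff$ $\rA(K) \tcomm T = \rA(K) \tmax T$ for every operator system $T$ (by the first clause of Theorem \ref{thm:kavruk}). Theorem \ref{thm:nc_kadison} then lets me replace the quantification over operator systems $T$ with quantification over $\rA(L)$ as $L$ ranges over compact nc convex sets, since every operator system arises, up to unital complete order isomorphism, as $\rA(L)$ for some $L$.

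For the second equivalence, the key step is to identify $\rC(K)$ with the maximal C*-cover $\cmax(\rA(K))$. This identification is implicit in the proof of Theorem \ref{thm:characterization-nc-simplex-c-star-system}: the universal property of $\rC(K)$ used there, namely that any unital complete order homomorphism from $\rA(K)$ into a C*-algebra $B = \ca(\psi(\rA(K)))$ extends to a surjective $*$-homomorphism $\rC(K) \to B$, is precisely the defining property of $\cmax(\rA(K))$. Once this identification is made, the second clause of Theorem \ref{thm:kavruk} applied to $S = \rA(K)$ and $T = \rA(L)$ yields $\rC(K) \tmax \rC(L) = \cmax(\rA(K) \tmax \rA(L))$, as required.

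There is no genuine obstacle here; the argument is entirely bookkeeping. The only non-trivial piece of translation is the recognition $\rC(K) = \cmax(\rA(K))$, which follows immediately from comparing universal properties. Everything else is a direct citation.
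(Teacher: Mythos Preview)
Your proposal is correct and matches the paper's own justification, which simply states that the corollary follows immediately from Theorem \ref{thm:characterization-nc-simplex-c-star-system} and Theorem \ref{thm:kavruk}. Your unpacking of the categorical duality and the identification $\rC(K) = \cmax(\rA(K))$ via universal properties is exactly the bookkeeping the paper leaves implicit.
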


The next result can be seen as a noncommutative analogue of \cite{NP1969}*{Theorem 1.2}.

\begin{thm}
Let $K$ be an nc Choquet simplex and let $L$ be a compact nc convex set. Let $M$ denote the nc state space of the maximal tensor product $\rA(K) \tmax \rA(L)$. For an extreme point $z \in (\partial M)_n$, there are maximal points $x \in K_n$ and $y \in L_n$ such that $z = x \otimes y$ and the corresponding representations $\delta_x : \rC(K) \to \cM_n$ and $\delta_y : \rC(L) \to \cM_n$ are commuting factor representations. If either of the C*-algebras $\cmin(\rA(K))$ or $\cmin(\rA(L))$ are of type I, then $x \in \partial K$ and $y \in \partial L$. 
\end{thm}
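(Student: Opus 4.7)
The plan is to exploit the nc-simplex hypothesis on $K$ to identify $\rC(M)$ with the maximal C*-tensor product $\rC(K) \tmax \rC(L)$, and then read off the structure of an extreme point $z \in (\partial M)_n$ from the theory of irreducible representations of this C*-algebra. First, since $K$ is an nc simplex, $\rA(K)$ is a C*-system by Theorem \ref{thm:characterization-nc-simplex-c-star-system}, so by Corollary \ref{cor:tensor-product-char-nc-simplices} (which packages Kavruk's Theorem \ref{thm:kavruk}) we obtain $\rC(M) = \cmax(\rA(K) \tmax \rA(L)) = \rC(K) \tmax \rC(L)$. An extreme $z \in (\partial M)_n$ corresponds to an irreducible *-homomorphism $\delta_z : \rC(M) \to \cM_n$, which by the universal property of $\tmax$ is determined by a unique pair of commuting *-homomorphisms $\delta_x : \rC(K) \to \cM_n$ and $\delta_y : \rC(L) \to \cM_n$ with $\delta_z(a \otimes b) = \delta_x(a)\,\delta_y(b)$. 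Setting $x = \delta_x|_{\rA(K)} \in K_n$ and $y = \delta_y|_{\rA(L)} \in L_n$, both are maximal (since their point evaluations are *-homomorphisms), and $z = x \otimes y$ in the sense that $z(a \otimes b) = x(a)\,y(b)$.

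Next, I would establish the factor-representation claim by a short commutant argument. Let $A = \delta_x(\rC(K))$ and $B = \delta_y(\rC(L))$. Commutativity gives $B \subseteq A'$, and irreducibility of $\delta_z$ yields $A' \cap B' = (AB)' = \bC \cdot 1_n$. If $w \in A \cap A'$, then $w \in A$, so for every $b \in B \subseteq A'$ one has $bw = wb$, giving $w \in B'$. Hence $w \in A' \cap B' = \bC$, so $A$ is a factor; the symmetric argument shows $B$ is a factor.

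The hard part is the conclusion about extremality under the type I hypothesis. Suppose $\cmin(\rA(K))$ is type I. Since $x$ is maximal, \cite{DK2019}*{Proposition 5.2.4} ensures $\delta_x$ factors through the quotient $\rC(K) \twoheadrightarrow \cmin(\rA(K))$, so $\delta_z$ descends to an irreducible *-homomorphism of $\cmin(\rA(K)) \tmax \rC(L)$. Because type I C*-algebras are nuclear, this equals $\cmin(\rA(K)) \tmin \rC(L)$. I would then appeal to the classical fact that when one factor of a minimal C*-tensor product is type I, every irreducible representation splits as a tensor product $\delta_z \simeq \pi \otimes \sigma$ with $\pi$ an irreducible representation of $\cmin(\rA(K))$ and $\sigma$ an irreducible representation of $\rC(L)$ on Hilbert spaces whose tensor product is $H_n$. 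This identifies $\delta_x$ and $\delta_y$ with $\pi \otimes 1$ and $1 \otimes \sigma$ in the tensor decomposition of $H_n$, and feeding this back through the characterization of extremality (maximality together with irreducibility of the point evaluation) yields the claim that $x \in \partial K$ and $y \in \partial L$.

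The main obstacle will be the last step: turning the tensor-product decomposition of $\delta_z$ into the extreme-point conclusion for $x$ and $y$ requires careful bookkeeping, because the natural extremality characterization compares $\delta_x$ to irreducibility on the full space $\cM_n$ rather than on the smaller tensor factor. Resolving this will likely require invoking the structure of extreme boundaries under unitary equivalence and ampliations, together with the fact that the irreducible decomposition of $\delta_z$ respects the commuting-pair structure determined by $x$ and $y$.
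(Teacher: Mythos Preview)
Your setup matches the paper's: you correctly invoke Corollary~\ref{cor:tensor-product-char-nc-simplices} to identify $\rC(M)$ with $\rC(K)\tmax\rC(L)$, and your commutant argument for the factor property is exactly the ``standard fact'' the paper cites without proof. However, there is a genuine gap in your argument for the maximality of $x$ and $y$.

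You write that $x$ and $y$ are maximal ``since their point evaluations are *-homomorphisms.'' This is not an argument: for \emph{every} point $x\in K_n$, the point evaluation $\delta_x:\rC(K)\to\cM_n$ is a *-homomorphism by definition (Definition~\ref{defn:point-evaluation}). What you have shown is only that the *-homomorphism $\rC(K)\to\cM_n$ obtained by restricting $\delta_z$ coincides with the point evaluation at its own barycenter---which is automatic by uniqueness of the *-homomorphic extension. Maximality of $x$ means that $\delta_x$ is the \emph{unique} representing map for $x$, and nothing in your argument rules out other representing maps. The paper supplies the missing idea: choose any nc Choquet maximal representing map $\mu_x$ for $x$ and any such $\nu$ for $y$, form the ucp map $\lambda=\mu_x\otimes\nu$ on $\rC(K)\tmax\rC(L)$, and observe that $\lambda$ has barycenter $z$. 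Since $z$ is extreme, $\delta_z$ is its unique representing map, forcing $\lambda=\delta_z$ and hence $\mu_x=\delta_x$, $\nu=\delta_y$. Now $\delta_x$ is itself maximal in the nc Choquet order, and Theorem~8.3.7 of \cite{DK2019} (applied to the minimal representation $(x,1_n)$) yields that $x$ is a maximal point; similarly for $y$. Note that this step uses the extremality of $z$ a second time, not the nc simplex hypothesis on $K$.

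For the type~I conclusion, your route through the tensor splitting of irreducibles of $\cmin(\rA(K))\tmin\rC(L)$ is more elaborate than necessary, and the obstacle you flag is real: the irreducible tensor factors live on smaller Hilbert spaces, not on $H_n$, so you cannot directly read off irreducibility of $\delta_x$ and $\delta_y$ as maps into $\cM_n$. The paper's argument is shorter: since $\delta_x$ factors through the type~I algebra $\cmin(\rA(K))$ and is a factor representation, standard structure theory of commuting factor representations with irreducible product forces both $\delta_x$ and $\delta_y$ to be irreducible, whence $x\in\partial K$ and $y\in\partial L$ by \cite{DK2019}*{Theorem~6.1.6}.
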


\begin{proof}
Let $z \in (\partial M)_n$ be an extreme point. By Corollary \ref{cor:tensor-product-char-nc-simplices}, $\rC(K) \tmax \rC(L) = \rC(\rA(K) \tmax \rA(L))$. Hence there are points $x \in K_n$ and $y \in L_n$ such that the corresponding representations $\delta_x : \rC(K) \to \cM_n$ and $\delta_y : \rC(L) \to \cM_n$ have commuting ranges and $\delta_z = \delta_x \times \delta_y$ (see e.g. \cite{BO2008}*{Theorem 3.2.6}).

Let $\mu_x : \rC(K) \to \cM_n$ denote the unique nc Choquet maximal representing map for $x$ and let $\nu : \rC(K) \to \cM_n$ be an nc Choquet maximal representing map for $y$. Then there is a unital completely positive map $\lambda = \mu_x \otimes \nu : \rC(K) \maxt \rC(L) \to \cM_n$ such that $\lambda|_{\rC(K)} = \mu_x$ and $\lambda|_{\rC(L)} = \nu$ (see e.g. \cite{BO2008}*{Theorem 3.5.3}). In particular, since $\mu_x$ has barycenter $x$ and $\nu$ has barycenter $y$, $\lambda$ has barycenter $z$, i.e. $\lambda$ is a representing map for $z$. Hence by \cite{DK2019}*{Proposition 8.3.6}, $\delta_z \prec_c \lambda$.

Since $z$ is an extreme point, \cite{DK2019}*{Proposition 5.2.3} implies that $\delta_z$ is the unique representing map for $z$. Hence $\lambda = \delta_z$, implying $\mu_x = \delta_x$ and $\nu = \delta_y$. Hence $\delta_z = \delta_x \times \delta_y$ and $z = x \times y$. Furthermore, applying  \cite{DK2019}*{Proposition 5.2.3} again implies that $x$ and $y$ are maximal points.

Since $z$ is extreme, $\delta_z$ is irreducible. It is a standard fact that in this case $\delta_x$ and $\delta_y$ are factor representations. Since $x$ and $y$ are maximal points, \cite{DK2019}*{Proposition 5.2.4} implies that the representations $\delta_x$ and $\delta_y$ factor through $\cmin(\rA(K))$ and $\cmin(\rA(L))$ respectively.

If either of $\cmin(\rA(K))$ or $\cmin(\rA(L))$ are of type I, then $\delta_x$ and $\delta_y$ are both irreducible, so in this case \cite{DK2019}*{Theorem 6.1.6} implies $x \in \partial K$ and $y \in \partial L$.
\end{proof}

\begin{example}
The points $x \in K_n$ and $y \in L_n$ in the factorization $z = x \times y$ are not necessarily extreme points. For example, let $\bF_2 = \langle u,v \rangle$ denote the free group on two generators. Let $K$ denote the nc state space of $\ca(\bF_2)$, so that $\ca(\bF_2)$ is unitally completely order isomorphic to $\rA(K)$.

Let $\lambda : \ca(\bF_2) \to \B(\ell^2(\bF_2))$ and $\rho : \ca(\bF_2) \to \B(\ell^2(\bF_2))$ denote the extensions to $\ca(\bF_2)$ of the left and right regular representations of $\bF_2$ respectively. By the universal property of the maximal tensor product (see e.g. \cite{BO2008}*{Proposition 3.3.7}), we obtain the biregular representation $\lambda \times \rho : \ca(\bF_2) \maxt \ca(\bF_2) \to \B(\ell^2(\bF_2))$. Since $\bF_2$ has the infinite conjugacy class property, meaning that every non-trivial conjugacy class is infinite, $\lambda$ and $\rho$ are factor representations and $\lambda \times \rho$ is irreducible. 

Let $x,y \in K$ denote the points corresponding to $\lambda$ and $\rho$ respectively. Let $L$ be the nc state space of $\ca(\bF_2) \maxt \ca(\bF_2)$ and let $z \in L$ denote the point corresponding to $\lambda \times \rho$. Then $z = x \times y$. Moreover, since $\lambda \times \rho$ is irreducible, \cite{DK2019}*{} implies that $z \in \partial L$. However, the representations $\lambda$ and $\rho$ are not irreducible, so $x \notin \partial K$ and $y \notin \partial K$. 
\end{example}

\section{Universal measurability} \label{sec:universally-measurable}

In this section, we will prove a key technical result about the map from Theorem \ref{thm:characterization-nc-simplex-expectation}. Specifically, we will show that if $K$ is an nc simplex and $\phi : \rC(K) \to \rA(K)^{**}$ is the unital completely positive map from Theorem \ref{thm:characterization-nc-simplex-expectation}, then $\varphi$ maps convex functions to universally measurable elements in $\cmin(\rA(K))^{**}$. We will require this result in our characterization of compact nc convex sets that are affinely homeomorphic to the nc state space of a C*-algebra in Section \ref{sec:nc-bauer-simplices}.

\begin{lem} \label{lem:breve-function}
Let $K$ be an nc Choquet simplex. For $x \in K_1$, let $\mu_x : \rC(K) \to \bC$ denote the unique nc Choquet maximal representing state for $x$. Let $f \in \rC(K)$ be a self-adjoint continuous nc function, such that $-f$ is convex. Then the function $\breve{f} : K_1 \to \bC$ defined by
\[
\breve{f}(x) = \mu_x(f), \qfor x \in K_1
\]
is lower semicontinuous and affine.
\end{lem}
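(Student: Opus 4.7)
The plan is to handle the two conclusions separately. Affinity will follow formally from the fact, established in Proposition \ref{prop:map-to-max-rep-measure-maximal}, that the section $\theta \colon x \mapsto \mu_x$ is an affine nc map. Lower semicontinuity will rest on a weak* compactness argument in the state space of $\rC(K)$ combined with the uniqueness of the maximal representing map on an nc simplex.

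For affinity, given $x_1, x_2 \in K_1$ and $t \in [0,1]$, I would write the scalar convex combination as an isometric compression of a direct sum, namely $t x_1 + (1-t) x_2 = \alpha^*(x_1 \oplus x_2)\alpha$ for the isometry $\alpha = (\sqrt{t}, \sqrt{1-t})^T \in \cM_{2,1}$. Applying the direct sum and isometric compression properties of $\theta$ from Proposition \ref{prop:map-to-max-rep-measure-maximal} immediately gives $\mu_{tx_1+(1-t)x_2} = t\mu_{x_1} + (1-t)\mu_{x_2}$ as states on $\rC(K)$, and evaluating both sides at $f$ yields the desired affine identity for $\breve{f}$.

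For lower semicontinuity, I would argue by contradiction. If $\breve{f}$ fails to be lower semicontinuous at some $x \in K_1$, then there exist $\epsilon > 0$ and a net $(x_\lambda)$ in $K_1$ with $x_\lambda \to x$ such that $\mu_{x_\lambda}(f) \leq \mu_x(f) - \epsilon$ for all $\lambda$. By weak* compactness of the state space of $\rC(K)$, I may pass to a subnet so that $\mu_{x_\lambda} \to \mu$ in the weak* topology for some state $\mu$ on $\rC(K)$. For each $a \in \rA(K)$, weak* convergence together with continuity of $a|_{K_1}$ yields $\mu(a) = \lim_\lambda \mu_{x_\lambda}(a) = \lim_\lambda a(x_\lambda) = a(x)$, so $\mu|_{\rA(K)} = x$ and $\mu$ is a representing state for $x$. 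Taking a minimal representation of $\mu$ and dilating it to a maximal point (invoking the coincidence of the dilation and nc Choquet orders from \cite{DK2019}*{Theorem 8.5.1} together with the dilation theorem \cite{DK2019}*{Theorem 8.3.7}) produces a maximal representing state $\mu'$ of $x$ with $\mu \prec_c \mu'$; the nc simplex hypothesis forces $\mu' = \mu_x$, so $\mu \prec_c \mu_x$. Convexity of $-f$ now gives $\mu(-f) \leq \mu_x(-f)$, equivalently $\mu(f) \geq \mu_x(f)$, which contradicts $\mu(f) = \lim_\lambda \mu_{x_\lambda}(f) \leq \mu_x(f) - \epsilon$.

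The main delicate point is showing that the weak* limit $\mu$ is dominated by $\mu_x$ in the nc Choquet order; this is where the uniqueness in the definition of an nc simplex and the dilation-order characterization of nc Choquet maximality both enter essentially. Everything else reduces to routine use of the nc duality and the structure of the affine section $\theta$ from Proposition \ref{prop:map-to-max-rep-measure-maximal}.
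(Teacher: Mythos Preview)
Your proposal is correct and follows essentially the same route as the paper's proof. Both arguments obtain affinity from Proposition~\ref{prop:map-to-max-rep-measure-maximal}, and for lower semicontinuity both pass to a weak*-limit representing state for $x$ and then use convexity of $-f$ together with the fact that any representing state for $x$ is dominated in the nc Choquet order by the unique maximal one $\mu_x$; the paper packages this last step as the formula $\breve{f}(x) = \inf_{\nu} \nu(f)$ over all representing states $\nu$, while you argue by contradiction and spell out the domination $\mu \prec_c \mu_x$ explicitly, but these are only organizational differences.
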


\begin{proof}
The fact that $\breve{f}$ is affine follows from Proposition \ref{prop:map-to-max-rep-measure-maximal}. Therefore, it remains to show that $\breve{f}$ is lower semicontinuous, i.e. that for a point $x \in K_1$ and a net $\{x_i\}$ in $K_1$ such that $\lim x_i = x$, $\liminf \breve{f}(x_i) \geq \breve{f}(x)$.

Since $-f$ is convex and $\mu_x$ is the unique representing state for $x$ that is maximal in the nc Choquet order, it follows from the definition of the nc Choquet order that for $x \in K_1$,
\[
\breve{f}(x) = \mu_x(f) = \inf_{\nu} \nu(f),
\]
where the infimum is taken over all states $\nu$ on $\rC(K)$ with barycenter $x$. 

Fix $x \in K_1$ and let $\{x_i\}$ be a net in $K_1$ such that $\lim x_i = x$. For $\epsilon > 0$, \cite{DK2019}*{Theorem 8.4.1} implies that for each $i$ there is a state $\nu_i$ on $\rC(K)$ with barycenter $x_i$ such that $\nu_i(f) < \breve{f}(x_i) + \epsilon$. If $\{\nu_j\}$ is any subnet and $\nu$ is a state on $\rC(K)$ with $\lim \nu_j = \nu$, then the continuity of the barycenter map implies that $\nu$ has barycenter $x$. Hence from above,
\[
\breve{f}(x) \leq \nu(f) = \lim \nu_j(f) \leq \liminf \breve{f}(x_j) + \epsilon. 
\]
Since $\epsilon$ was arbitrary, we conclude that $\breve{f}$ is lower semicontinuous.
\end{proof}

Let $A$ be a C*-algebra. A self-adjoint element $a \in A^{**}$ is said to be {\em universally measurable} in $A^{**}$ if for every state $\phi$ on $A$ and every $\epsilon > 0$, there are self-adjoint elements $b,c \in A^{**}$ such that $b \leq a \leq c$ and $\phi(c - b) < \epsilon$ (cf. \cite{Ped1979}*{4.3.11}). In Section \ref{sec:nc-bauer-simplices}, we will require the result that the atomic representation of $A$ is faithful on the set of universally measurable elements in $A^{**}$ (see e.g. \cite{Ped1979}*{4.3.15}).

\begin{lem} \label{lem:universally-measurable}
Let $K$ be an nc Choquet simplex. Let $\phi : \rC(K) \to \rA(K)^{**}$ denote the unital completely positive map from Theorem \ref{thm:characterization-nc-simplex-expectation}. Let $\iota : \rA(K) \to \cmin(\rA(K))$ denote the canonical unital complete order embedding and let $\iota^{**} : \rA(K)^{**} \to \cmin(\rA(K))^{**}$ denote the unique normal extension of $\iota$. Let $S \subset C(K)$ be the norm closure of the subspace spanned by the convex self-adjoint elements. Then $(\iota^{**} \circ \varphi(S)$ is contained in the universally measurable elements of $\cmin(\rA(K))^{**}$.
\end{lem}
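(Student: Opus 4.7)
The plan is to handle the case of a single convex self-adjoint element first, showing that $(\iota^{**} \circ \varphi)(f) \in \cmin(\rA(K))^{**}$ is upper semicontinuous (i.e., an infimum in the bidual of a bounded decreasing net from $\cmin(\rA(K))_{sa}$), and then to sandwich general elements of $S$ using this. Applying Lemma \ref{lem:breve-function} to $-f$ gives that the scalar function $\breve{f} : K_1 \to \bR$ with $\breve{f}(x) = \mu_x(f)$ is upper semicontinuous and affine on the compact convex set $K_1$. Its hypograph is then closed and convex, so by Hahn-Banach separation $\breve{f}$ is the pointwise infimum of the downward directed family $\{g_\alpha\}$ of continuous affine functions on $K_1$ satisfying $g_\alpha \geq \breve{f}$. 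The classical Kadison representation applied to the function system $\rA(K)$, whose classical state space is $K_1$, lifts each $g_\alpha$ to a unique $\tilde{g}_\alpha \in \rA(K)_{sa}$, yielding a bounded decreasing net $\{\iota(\tilde{g}_\alpha)\} \subseteq \cmin(\rA(K))_{sa}$.

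I would then identify the weak-$*$ infimum of this net in the von Neumann algebra $\cmin(\rA(K))^{**}$ with $(\iota^{**} \circ \varphi)(f)$. Normal states on $\cmin(\rA(K))^{**}$ correspond bijectively to states on $\cmin(\rA(K))$ and separate elements, so it suffices to check agreement after composing with any state $\psi$ on $\cmin(\rA(K))$. Writing $x := \psi|_{\rA(K)} \in K_1$, monotone convergence of the normal state $\psi^{**}$ gives $\psi^{**}(\inf_\alpha \iota(\tilde{g}_\alpha)) = \inf_\alpha g_\alpha(x) = \breve{f}(x)$, while $\psi^{**} \circ \iota^{**} = x^{**}$ together with Corollary \ref{cor:formula-affinization-map} gives $\psi^{**}((\iota^{**} \circ \varphi)(f)) = x^{**}(\varphi(f)) = \mu_x(f) = \breve{f}(x)$. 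Thus $(\iota^{**} \circ \varphi)(f)$ is upper semicontinuous in $\cmin(\rA(K))^{**}$, and the same argument applied to $-f$ shows that if $-f$ is convex then $(\iota^{**} \circ \varphi)(f)$ is lower semicontinuous.

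For $f = g - h$ with $g, h \in \rC(K)$ both convex self-adjoint, choosing decreasing nets $\{\tilde{g}_\alpha\}, \{\tilde{h}_\beta\}$ in $\rA(K)_{sa}$ as above, the order-preservation of $\iota^{**}$ gives
\[
(\iota^{**} \circ \varphi)(g) - \iota(\tilde{h}_\beta) \ \leq\ (\iota^{**} \circ \varphi)(f) \ \leq\ \iota(\tilde{g}_\alpha) - (\iota^{**} \circ \varphi)(h)
\]
for all $\alpha, \beta$. The lower bound equals $\inf_{\alpha'}(\iota(\tilde{g}_{\alpha'}) - \iota(\tilde{h}_\beta))$, hence is upper semicontinuous, while the upper bound equals $\sup_{\beta'}(\iota(\tilde{g}_\alpha) - \iota(\tilde{h}_{\beta'}))$, hence is lower semicontinuous. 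For a state $\psi$ on $\cmin(\rA(K))$ with $x = \psi|_{\rA(K)}$ and $\epsilon > 0$, the difference of these bounds evaluates under $\psi^{**}$ to $(g_\alpha(x) - \breve{g}(x)) + (h_\beta(x) - \breve{h}(x))$, which is strictly less than $\epsilon$ for $\alpha, \beta$ sufficiently far along in their directed sets. This verifies the sandwich condition for universal measurability of $(\iota^{**} \circ \varphi)(f)$. Every element of the real linear span of the convex self-adjoint elements of $\rC(K)$ is a difference of two convex self-adjoint elements, and the universally measurable self-adjoint elements of $\cmin(\rA(K))^{**}$ form a norm-closed subspace (an $\epsilon$-perturbation of the sandwich by a scalar multiple of the identity absorbs any norm approximation), so the conclusion extends to all of $S$. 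The principal obstacle is the identification of the weak-$*$ infimum in the bidual with $(\iota^{**} \circ \varphi)(f)$ in the first step, which rests on the separation of $\cmin(\rA(K))^{**}$ by its normal states and on the compatibility $\psi^{**} \circ \iota^{**} = (\psi|_{\rA(K)})^{**}$.
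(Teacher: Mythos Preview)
Your proof is correct and follows the same overall strategy as the paper: show that $(\iota^{**}\circ\varphi)(f)$ is semicontinuous in $\cmin(\rA(K))^{**}$ for each convex self-adjoint $f$, and then pass to the closed linear span using that the universally measurable elements form a norm-closed real subspace.

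The execution, however, is genuinely different. The paper works on the state space $L_1$ of the C*-algebra $\cmin(\rA(K))$: it observes that the evaluation function $\mu\mapsto\mu^{**}(\iota^{**}(\varphi(f)))$ on $L_1$ factors as $\breve f\circ\eta$ for the restriction map $\eta:L_1\to K_1$, invokes Lemma~\ref{lem:breve-function} to get semicontinuity, and then cites Pedersen's results \cite{Ped1979}*{3.11.8, 4.3.13} as black boxes (semicontinuity of the evaluation function characterizes membership in the monotone closure, which in turn lies in the universally measurable elements). You instead stay on $K_1$, build the approximating net by hand inside $\rA(K)_{sa}$, push it into $\cmin(\rA(K))_{sa}$ via $\iota$, and identify the weak-$*$ infimum with $(\iota^{**}\circ\varphi)(f)$ by testing against normal states; for differences $g-h$ of convex functions you then verify the sandwich definition of universal measurability explicitly. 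Your route is more self-contained and makes the mechanism transparent, while the paper's is shorter by outsourcing the semicontinuity machinery to Pedersen.

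One point to tighten: the claim that the family $\{g\in\rA(K_1): g\geq\breve f\}$ is downward directed is not immediate from Hahn--Banach separation alone. Given $g_1,g_2$ in this family, $\min(g_1,g_2)$ is continuous concave while $\breve f$ is upper semicontinuous convex (being affine), so the existence of a continuous affine $g_3$ with $\breve f\leq g_3\leq\min(g_1,g_2)$ is the Edwards separation theorem (see e.g.\ \cite{Alf1971}*{Chapter I}) rather than bare Hahn--Banach. With that justification in place the argument goes through as written.
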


\begin{proof}
Let $f \in \rC(K)$ be a self-adjoint continuous nc function with the property that $-f$ is convex. Note that by \cite{DK2019}*{Corollary 4.4.4} $\rA(K)^{**}$ can be identified with bounded affine nc functions on $K$. In particular, $\varphi(f)(x) = x^{**}(\varphi(f))$. By Corollary \ref{cor:formula-affinization-map}, $\phi(f)(x) = \mu_x(f)$ for $x \in K_1$. Hence $\phi(f)|_{K_1} = \breve{f}$, where $\breve{f} : K_1 \to \bC$ is the lower semicontinuous affine function from Lemma \ref{lem:breve-function}. 

Consider the element $b = \varphi(f) \in \rA(K)^{**} \subset \cmin(\rA(K))^{**}$. Let $L_1$ denote the state space of $\cmin(\rA(K))$. By \cite{Ped1979}*{3.11.8} $b$ belongs to the monotone closure of $\cmin(\rA(K))$ in $\cmin(\rA(K))^{**}$ if and only if the function on $L_1$ defined by $\hat{b}(\mu) = \mu^{**}(b)$ is lower semi-continuous. Let $\eta \colon L_1 \to K_1$ denote the dual of the complete embedding of $\rA(K)$ in $\cmin(\rA(K))$. This is a continuous affine map. Since $b \in \rA(K)^{**}$ for every $\mu \in L_1$, $\hat{b}(\mu) = (\mu \circ \iota)^{**}(\varphi(f)) = \breve{f}(\eta(\mu))$. Applying Lemma \ref{lem:breve-function} we conclude that $\hat{b}$ is lower semicontinuous. By \cite{Ped1979}*{4.3.13} the monotone closure of the self-adjoint elements of $\cmin(\rA(K))$ is contained in the norm closed subspace of the universally measurable elements. Thus $\iota^{**} \circ \varphi(S)$ consists of measurable elements.
\end{proof}

\begin{rem}
We do not know whether the span of the convex function in $\rC(K)$ is dense (see \cite{DK2019}*{Section 7.2}), and so we do not know whether the image of $\rC(K)$ under the map $\iota^{**} \circ \varphi$ is contained in the set of universally measurable elements. However, the above result is sufficient for our purposes.
\end{rem}

\section{Irreducible points and the spectral topology} \label{sec:spectral-topology}

In this section, we will introduce a notion of irreducible point in a compact nc convex set $K$. The set $\Irr(K)$ of irreducible points in $K$ contains the extreme boundary $\partial K$, and points in $\Irr(K)$ correspond to irreducible representations of the C*-algebra $\rC(K)$ of continuous nc functions on $K$. There is a natural topology on the set of irreducible points of $K$ obtained as a pullback of the topology on the C*-algebraic spectrum of $\rC(K)$. We will see that the closure of $\partial K$ in $\Irr(K)$ is a noncommutative analog of the classical Shilov boundary of a classical compact convex set.

\begin{defn}
Let $K$ be a compact nc convex set. We will say that a point $x \in K$ is {\em reducible} if it is unitarily equivalent to a direct sum $x \simeq y \oplus z$ for some $y,z \in K$. We will say that $x$ is {\em irreducible} if it is not reducible. We let $\Irr(K)$ denote the set of irreducible points in $K$.
\end{defn}

\begin{rem}
A point $x \in K_n$ is irreducible if and only if the corresponding representation $\delta_x : \rC(K) \to \cM_n$ is irreducible.
\end{rem}

Let $K$ be a compact nc convex set. Let $\Rep(\rC(K))$ denote the set of unitary equivalence classes of non-degenerate representations of $\rC(K)$ and let $\Irrep(\rC(K)) \subseteq \Rep(\rC(K))$ denote the subset of unitary equivalence classes of irreducible representations. Then $\Irrep(\rC(K))$ is the usual C*-algebraic spectrum of $\rC(K)$. For $x \in K_n$, let $[\delta_x]$ denote the unitary equivalence class of the corresponding representation $\delta_x : \rC(K) \to \cM_n$. Then the map $K \to \Rep(\rC(K)) : x \to [\delta_x]$ is surjective and the image of $\Irr(K)$ under this map is $\Irrep(\rC(K))$.

\begin{defn}
Let $K$ be a compact nc convex set. We define the {\em spectral topology} on $\Irr(K)$ to be the pullback of the hull-kernel topology on the set $\Irrep(\rC(K))$ of unitary equivalence classes of irreducible representations of $\rC(K)$ (see e.g. \cite{Dix1977}*{Chapter 3}). Specifically, a subset of $\Irr(K)$ is open in the spectral topology if it is the preimage of a set in $\Irrep(\rC(K))$ that is open in the hull-kernel topology.
\end{defn}

For a classical compact convex set $C$, the irreducible representations of the C*-algebra $\rC(C)$ of continuous functions on $C$ are precisely the point evaluations $\delta_x : \rC(C) \to \bC$ corresponding to points $x \in C$. The classical Shilov boundary for $C$ is the closure $\ol{\partial C}$ of the extreme boundary $\partial C$ in $C$, and the C*-algebra $\rC(\ol{\partial C})$ is the minimal commutative C*-algebra generated by a unital order embedding of the function system $\rA(C)$ (see \cite{DK2019}*{Section 4.1}). In particular, for $x \in C$, the corresponding point evaluation $\delta_x$ factors through $\rC(\ol{\partial C})$ if and only if $x \in \ol{\partial K}$.

The next result justifies the assertion that for a compact nc convex set $K$, the closure of $\partial K$ in $\Irr(K)$ with respect to the spectral topology is a noncommutative analog of the classical Shilov boundary.

\begin{prop} \label{prop:closure-ext-bdy-spec-top}
Let $K$ be a compact nc compact set. A point $x \in \Irr(K)_n$ belongs to the closure of $\partial K$ in the spectral topology if and only if the corresponding representation $\delta_x : \rC(K) \to \cM_n$ factors through $\cmin(\rA(K))$.  Hence $\partial K$ is closed in $\Irr(K)$ with respect to the spectral topology if and only if whenever $x \in \Irr(K)_n$ is an irreducible point with the property that the corresponding representation $\delta_x : \rC(K) \to \cM_n$ factors through $\cmin(\rA(K))$, then $x \in \partial K$. Similarly, $\partial K$ is dense in $\Irr(K)$ with respect to the spectral topology if and only if for every irreducible point $x \in \Irr(K)_n$ the corresponding representation $\delta_x : \rC(K) \to \cM_n$ factors through $\cmin(\rA(K))$.
\end{prop}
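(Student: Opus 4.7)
The plan is to reduce everything to a single ideal-theoretic identity. By the definition of the spectral topology as a pullback, a point $x \in \Irr(K)_n$ lies in the closure of $\partial K$ exactly when the equivalence class $[\delta_x]$ lies in the hull--kernel closure of $\{[\delta_y] : y \in \partial K\}$ in $\Irrep(\rC(K))$; and the latter happens if and only if $\ker\delta_x \supseteq J$, where
\[
J := \bigcap_{y \in \partial K} \ker \delta_y.
\]
On the other hand, $\delta_x$ factors through $\cmin(\rA(K))$ if and only if $\ker\delta_x \supseteq I$, with $I$ the kernel of the canonical surjection $q : \rC(K) \to \cmin(\rA(K))$. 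The first assertion of the proposition therefore reduces to the identity $I = J$.

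The inclusion $I \subseteq J$ is immediate: each $\delta_y$ with $y \in \partial K$ already factors through $\cmin(\rA(K))$ (as recorded in the discussion preceding the last theorem of Section 3), so $\ker\delta_y \supseteq I$. For the converse inclusion, I plan to invoke that same theorem, which identifies the nc state space $L$ of $\cmin(\rA(K))$---viewed as the nc states on $\rC(K)$ that factor through $q$---as the closed nc convex hull of $\{\delta_y : y \in \partial K\}$. For $f \in J$, the positive element $f^*f$ is annihilated by every $\delta_y$ with $y \in \partial K$, hence by every nc convex combination of these point evaluations, and then, by point--weak* continuity, by every element of $L$. In particular every state on $\cmin(\rA(K))$ annihilates $q(f^*f)$, so $q(f^*f) = 0$ and $q(f) = 0$, i.e.\ $f \in I$.

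Once $I = J$ is in hand, the ``Hence'' and ``Similarly'' clauses follow by direct reformulation: $\partial K$ is closed in $\Irr(K)$ precisely when every $x$ in the spectral closure of $\partial K$ already lies in $\partial K$, which by the first assertion is exactly the stated factoring condition; and $\partial K$ is dense in $\Irr(K)$ precisely when every $x \in \Irr(K)$ lies in that closure, i.e.\ when the factoring condition holds for every irreducible point.

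The only genuinely substantive step is the inclusion $J \subseteq I$; everything else is bookkeeping about hull--kernel closures and about how representations of $\rC(K)$ factor through the quotient. Within that step, the one point that warrants care is turning the nc convex hull description of $L$ into ordinary scalar vanishing on $f^*f$; but since $f^*f \geq 0$, the set of states annihilating it is a point--weak*-closed face of the state space of $\rC(K)$, so stability under both nc convex combinations and w*-limits is automatic.
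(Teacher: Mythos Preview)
Your proof is correct and follows essentially the same approach as the paper: both reduce the first assertion to the ideal identity $\ker q = \bigcap_{y \in \partial K} \ker \delta_y$ and then read off the closure/density statements from the hull--kernel definition. The paper simply cites this identity as established in the proof of \cite{DK2019}*{Theorem 6.5.1}, whereas you rederive it from the \emph{statement} of that theorem (the description of the nc state space of $\cmin(\rA(K))$ as the closed nc convex hull of $\{\delta_y : y \in \partial K\}$); this is a minor presentational difference, not a different route.
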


\begin{proof}
Let $\pi : \rC(K) \to \cmin(\rA(K))$ denote the canonical quotient map. Then as in the proof of \cite{DK2019}*{Theorem 6.5.1}, $\ker \pi = \cap_{y \in \partial K} \ker \delta_y$.

If $x \in \Irr(K)$ is a point that belongs to the closure of $\partial K$ in the spectral topology, then from above the corresponding representation $\delta_x$ satisfies $\ker \delta_x \supseteq \pi$, so $\delta_x$ factors through $\cmin(\rA(K))$. 

Conversely, if $x \in \Irr(K)$ has the property that $\delta_x$ factors through $\cmin(\rA(K))$, then $\ker \delta_x \supseteq \pi$, so from above $x$ belongs to the closure of $\partial K$ in the spectral topology.
\end{proof}

\begin{prop} \label{prop:not_closed_bdry}
Let $K$ be a compact nc convex set such that $\partial K$ is not closed in $\Irr(K)$ with respect to the spectral topology. Then there is $n$, a net $\{ z_i \in K_n\}$ of maximal points and a non-maximal point $z \in K_n$ such that $\lim \delta_{z_i} = \delta_z$ in the point-strong topology. 
\end{prop}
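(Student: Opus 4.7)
The strategy combines Proposition~\ref{prop:closure-ext-bdy-spec-top} with Fell's characterization of weak containment, together with a direct-sum padding argument to place the approximating maximal representations on a common Hilbert space.

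By hypothesis and Proposition~\ref{prop:closure-ext-bdy-spec-top}, there exists an irreducible point $z \in \Irr(K)$ with $\delta_z$ factoring through $\cmin(\rA(K))$ but $z \notin \partial K$. Since the extreme points of $K$ are exactly the irreducible maximal points, $z$ is not maximal. Fix a sufficiently large infinite cardinal $n$ and, by direct-summing $z$ with itself if necessary (which preserves both non-maximality and the property that $\delta_z$ factors through $\cmin(\rA(K))$), arrange that $z \in K_n$. This $z$ will be the desired non-maximal limit.

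For the approximating net, observe that $\ker \delta_z \supseteq \bigcap_{y \in \partial K} \ker \delta_y$, so by Fell's theorem $\delta_z$ is weakly contained in the family $\{\delta_y : y \in \partial K\}$. For each triple $(F, G, \epsilon)$ with $F \subset \rA(K)$ finite, $G = \{\xi_1,\dots,\xi_r\} \subset H_n$ a finite orthonormal set, and $\epsilon > 0$, Fell's characterization produces extreme points $y_1, \ldots, y_k \in \partial K$ (with $y_j \in K_{m_j}$) and vectors $\eta_1,\dots,\eta_r \in H_\sigma := \bigoplus_j H_{m_j}$ such that the matrix coefficients of $\sigma = \delta_{y_1} \oplus \cdots \oplus \delta_{y_k}$ on the $\eta_p$'s lie within $\epsilon$ of those of $\delta_z$ on the $\xi_p$'s for all $a \in F$; a small perturbation lets us assume the $\eta_p$'s are orthonormal. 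Because direct sums of maximal points are maximal (as in the proof of Proposition~\ref{prop:maximal-nc-states-nc-convex}), $\sigma = \delta_w$ for the maximal point $w = \sum_j \gamma_j y_j \gamma_j^* \in K_{m_1+\cdots+m_k}$ with the canonical isometries $\gamma_j$.

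To realize this approximation inside $K_n$, enlarge $\sigma$ to a representation $\sigma'$ on $H_n$ by direct-summing with enough copies of some fixed extreme representation so that the total dimension is exactly $n$. The enlarged $\sigma'$ is still a direct sum of extreme representations, so $\sigma' = \delta_{w'}$ for a maximal point $w' \in K_n$. Identify $H_{\sigma'} \cong H_n$ and pick a unitary $u \in \cM_n$ carrying the orthonormal set $\{\eta_p\}$ to $\{\xi_p\}$. Then $z_{(F,G,\epsilon)} := u w' u^* \in K_n$ is a maximal point (unitary conjugation preserves maximality), and its point evaluation $\delta_{z_{(F,G,\epsilon)}} = u \sigma' u^*$ has matrix coefficients on $G\times G$ over $F$ within $\epsilon$ of those of $\delta_z$. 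The net indexed by $(F,G,\epsilon)$ therefore satisfies $\langle \delta_{z_i}(a)\xi, \eta\rangle \to \langle \delta_z(a)\xi, \eta\rangle$ for every $a \in \rA(K)$ and $\xi, \eta \in H_n$. Because the $\delta_{z_i}$ and $\delta_z$ are $*$-homomorphisms, the identity $\|\pi(a)\xi - \pi'(a)\xi\|^2 = \langle \pi(a^*a)\xi,\xi\rangle - 2\re \langle \pi(a)\xi, \pi'(a)\xi\rangle + \langle \pi'(a^*a)\xi,\xi\rangle$ upgrades point-weak to point-strong convergence on $\rA(K)$, and since $\rA(K)$ generates $\rC(K)$ as a C*-algebra this implies $\delta_{z_i} \to \delta_z$ point-strong on $\rC(K)$.

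The main obstacle is the third step: Fell's theorem only gives matrix-coefficient approximation on vectors internal to the auxiliary representation $\sigma$, while the conclusion demands point-strong convergence of representations on the single Hilbert space $H_n$. The padding-then-conjugation device resolves this by exploiting two structural features of nc convex sets simultaneously: direct sums of maximal points remain maximal, and unitary conjugation of a point evaluation at a maximal point is again the point evaluation at a maximal point. The final weak-to-strong upgrade for $*$-homomorphisms is standard.
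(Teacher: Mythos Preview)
Your overall strategy matches the paper's: use that $\delta_z$ is weakly contained in $\{\delta_y : y \in \partial K\}$, then produce point-strong approximants built from direct sums of extreme (hence maximal) points. The paper's execution is considerably more streamlined, though: it forms the single maximal point $v = (\oplus_{y \in \partial K}\, y)^{(m)}$ on $H_n$, observes $\ker \delta_u \supseteq \ker \delta_v$ where $u = x^{(l)}$ is the (non-maximal) amplification of the irreducible non-extreme $x$, and then cites \cite{Dix1977}*{3.9.8} directly to obtain a net of maximal $w_i \in K_n$ (finite direct sums of unitary conjugates of $v$) with $\delta_{w_i} \to \delta_u$ point-strong. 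Your padding-and-conjugation device is essentially a by-hand reproof of that Dixmier result.

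Two technical slips need repair. First, your index set of triples $(F,G,\epsilon)$ with $G$ a finite \emph{orthonormal} set is not directed: two orthonormal sets need not have a common orthonormal superset (e.g.\ $\{e_1\}$ and $\{(e_1+e_2)/\sqrt{2}\}$), so as written you do not have a net. Replace $G$ by a finite-dimensional subspace of $H_n$ (these are directed under inclusion) and choose an orthonormal basis inside the argument. Second, your weak-to-strong upgrade fails as written: the displayed identity requires convergence of $\langle \delta_{z_i}(a^*a)\xi,\xi\rangle$, but $a^*a$ need not lie in $\rA(K)$ when $a$ does, and you only arranged point-weak convergence on $\rA(K)$. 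The fix is to take $F \subset \rC(K)$ from the start---Fell's approximation applies to representations of $\rC(K)$---so that you obtain point-weak convergence on all of $\rC(K)$; the upgrade is then immediate and the final ``$\rA(K)$ generates $\rC(K)$'' step becomes unnecessary.
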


\begin{proof}
If $\partial K$ is not closed in $\Irr(K)$ with respect to the spectral topology, then there is $x \in \Irr(K)$ such that $x$ belongs to the closure of $\partial K$ with respect to the spectral topology but $x \notin \partial K$. By \cite{DK2019}*{Theorem 6.1.6}, $x$ is not a maximal point.

Let $z = \oplus_{y \in \partial K} y$. Choose cardinals $l$ and $m$ such that the amplifications $u = x^{(l)}$ and $v = z^{(m)}$ satisfy $u,v \in K_n$ for an infinite cardinal $n$. Since $x$ is not a maximal point, $u$ is not a maximal point. On the other hand, since points in $\partial K$ are maximal and direct sums of maximal points are maximal, $v$ is maximal.

By the definition of the spectral topology in terms of the hull-kernel topology,
\[
\ker \delta_u = \ker \delta_x \supseteq \cap_{y \in \partial K} \delta_y = \ker \delta_z = \ker \delta_v.
\]
Hence $\delta_u$ is weakly contained in $\delta_v$.  By \cite{Dix1977}*{3.9.8}, there is a net $\{w_i \in K_n\}$ of finite direct sums of points unitarily equivalent to $v$ such that $\delta_u = \lim \delta_{w_i}$ in the point-strong topology. It follows that $u = \lim v_i \in K$.
\end{proof}

\begin{rem}
Let $K$ be a compact nc convex set and $n$ be a nonzero cardinal. The map from $\Irr(K)_n$ to the spectrum of $C(K)$ sending a point $x \in \Irr(K)_n$ to the unitary equivalence class $[\delta_x]$ is surjective. Moreover, it follows from \cite{Dix1977}*{Theorem 3.5.8} that the map is open with respect to the relative point-weak* topology on $\Irr(K)_n$. Thus the relative spectral topology on $\Irr(K)_n$ is coarser than the relative point-weak* topology on $\Irr(K)_n$.

The preimage of the unitary equivalence class $[\delta_x]$ in $\Irr(K)_n$ is the set of unitary conjugates of $x$. In particular, a unitarily invariant set in $\Irr(K)_n$ is open in the relative spectral topology if and only if it is open in the relative point-weak* topology.
\end{rem}

\section{Noncommutative Bauer simplices} \label{sec:nc-bauer-simplices}

A simplex $C$ is said to be a Bauer simplex if the extreme boundary $\partial C$ is closed. Bauer showed that $C$ is a classical Bauer simplex if and only if it is affinely homeomorphic to the space $\rP(X)$ of probability measures on a compact Hausdorff space $X$ equipped with the weak* topology. Equivalently, $C$ is a classical Bauer simplex if and only if it is affinely homeomorphic to the state space of the unital commutative C*-algebra $\rC(X)$ of continuous functions on $X$.

In this section, we will introduce a noncommutative generalization of the notion of a classical Bauer simplex. We will show that a compact nc convex set $K$ is an nc Bauer simplex if and only if it is affinely homeomorphic to the nc state space of a unital C*-algebra. 

\begin{defn}
A compact nc convex set $K$ is a {\em nc Bauer simplex} if it is an nc Choquet simplex and the extreme boundary $\partial K$ is a closed subset of the set $\Irr(K)$ of irreducible points in $K$ with respect to the spectral topology.
\end{defn}

Proposition \ref{prop:closure-ext-bdy-spec-top} immediately implies the following characterization of nc Bauer simplices.

\begin{thm} \label{thm:characterization-nc-bauer-simplices-reps}
An nc simplex $K$ is an nc Bauer simplex if and only if the barycenter of every irreducible representation of $\rC(K)$ that factors through $\cmin(\rA(K))$ belongs to the extreme boundary $\partial K$.
\end{thm}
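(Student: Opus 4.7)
The plan is to derive this theorem as a direct corollary of Proposition \ref{prop:closure-ext-bdy-spec-top}, which is essentially what the statement preceding the theorem already promises. The essential bookkeeping is to identify the points of $\Irr(K)_n$ with the irreducible representations $\rC(K) \to \cM_n$ via the correspondence $x \leftrightarrow \delta_x$, and then to translate the word ``barycenter'' into this language.

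First I would note that for any representation $\pi \colon \rC(K) \to \cM_n$, the restriction $x := \pi|_{\rA(K)}$ lies in $K_n$ (since $\pi$ is unital and completely positive), and by the uniqueness clause in Definition \ref{defn:point-evaluation} we must have $\pi = \delta_x$. Thus $x$ is the barycenter of $\pi$, and conversely every point evaluation $\delta_x$ with $x \in K_n$ is a representation. Combined with the remark that $x \in \Irr(K)_n$ if and only if $\delta_x$ is irreducible, this gives a bijection between $\Irr(K)_n$ and irreducible representations $\rC(K) \to \cM_n$ that sends $x$ to $\delta_x$ and sends an irreducible representation $\pi$ to its barycenter $\pi|_{\rA(K)}$.

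Next I would apply Proposition \ref{prop:closure-ext-bdy-spec-top}: its second assertion says that $\partial K$ is closed in $\Irr(K)$ with respect to the spectral topology if and only if every $x \in \Irr(K)_n$ whose representation $\delta_x$ factors through $\cmin(\rA(K))$ already lies in $\partial K$. Under the bijection above, ``$x \in \Irr(K)_n$ with $\delta_x$ factoring through $\cmin(\rA(K))$'' is the same data as ``an irreducible representation of $\rC(K)$ that factors through $\cmin(\rA(K))$,'' and $x \in \partial K$ is exactly the assertion that the barycenter of that representation lies in $\partial K$.

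Finally, by the definition of nc Bauer simplex (applied under the standing assumption that $K$ is an nc simplex), $K$ is an nc Bauer simplex if and only if $\partial K$ is closed in $\Irr(K)$ with respect to the spectral topology. Combining the previous paragraph's translation with this definition yields the desired equivalence. There is no real obstacle here; the content of the theorem is a verbal recasting of Proposition \ref{prop:closure-ext-bdy-spec-top} in terms of representations and barycenters rather than points and point evaluations.
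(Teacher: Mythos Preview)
Your proposal is correct and matches the paper's approach exactly: the paper simply states that Proposition \ref{prop:closure-ext-bdy-spec-top} immediately implies this theorem, without giving a separate proof. Your write-up just makes explicit the routine identification of irreducible points $x \in \Irr(K)$ with irreducible representations $\delta_x$ of $\rC(K)$ and of barycenters with restrictions to $\rA(K)$, which is precisely the content needed to read off the theorem from the second assertion of Proposition \ref{prop:closure-ext-bdy-spec-top}.
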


\begin{prop} \label{prop:nc-bauer-simplex-diagram}
Let $K$ be an nc Bauer simplex. Let $\iota : \rA(K) \to \cmin(\rA(K))$ denote the canonical unital complete order embedding and let $\iota^{**} : \rA(K)^{**} \to \cmin(\rA(K))^{**}$ denote the unique normal extension of $\iota$. Let $\pi : \rC(K) \to \cmin(\rA(K))$ denote the canonical surjective homomorphism satisfying $\pi|_{\rA(K)} = \iota$, and let $\phi : \rC(K) \to \rA(K)^{**}$ denote the unital completely positive map from Theorem \ref{thm:characterization-nc-simplex-expectation}. Then $\pi = \iota^{**} \circ \phi$, and hence the following diagram commutes:
\[
\begin{tikzcd}
\rC(K) \arrow[d, "\varphi"] \arrow[r, "\pi"] & \cmin(\rA(K)) \arrow[d,"\id",hook] \\  \rA(K)^{**} \arrow[r,"\iota^{**}"] & \cmin(\rA(K))^{**} 
\end{tikzcd}
\]
\end{prop}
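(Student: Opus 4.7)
The plan is to show that both $\pi$ (composed with the canonical inclusion $\cmin(\rA(K)) \hookrightarrow \cmin(\rA(K))^{**}$) and $\Psi := \iota^{**} \circ \phi$ are $*$-homomorphisms $\rC(K) \to \cmin(\rA(K))^{**}$ agreeing on the subsystem $\rA(K)$. Since $\rA(K)$ generates $\rC(K)$ as a C*-algebra by \cite{DK2019}*{Theorem 4.4.3}, the two maps must then coincide. Agreement on $\rA(K)$ is immediate from $\phi|_{\rA(K)} = \id_{\rA(K)}$ and $\pi|_{\rA(K)} = \iota$. The map $\pi$ is a $*$-homomorphism by construction, so the substantive point is to prove that $\Psi$ is multiplicative.

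First I would establish the identity $\Psi(f) = \pi(f)$ for every $f$ in the self-adjoint subspace $S \subseteq \rC(K)$ of Lemma~\ref{lem:universally-measurable}. For each $x \in \partial K$, let $\rho_x \colon \cmin(\rA(K)) \to \cM_n$ be the irreducible representation with $\rho_x \circ \pi = \delta_x$; then $\rho_x \circ \iota = x$, so by normality $\rho_x^{**} \circ \iota^{**} = x^{**}$. Since extreme points are maximal, Corollary~\ref{cor:formula-affinization-map} yields $\mu_x = x^{**} \circ \phi = \delta_x$, so
\[
\rho_x^{**}(\Psi(f)) = x^{**}(\phi(f)) = \mu_x(f) = \delta_x(f) = \rho_x(\pi(f)) \qfor f \in \rC(K).
\]
By Theorem~\ref{thm:characterization-nc-bauer-simplices-reps} the representations $\{\rho_x : x \in \partial K\}$ exhaust the irreducible representations of $\cmin(\rA(K))$ up to unitary equivalence, so the atomic representation annihilates $\Psi(f) - \pi(f)$. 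For $f \in S$, Lemma~\ref{lem:universally-measurable} makes $\Psi(f)$ universally measurable in $\cmin(\rA(K))^{**}$, and $\pi(f) \in \cmin(\rA(K))$ is trivially so; since the atomic representation is faithful on universally measurable elements, the difference must vanish and $\Psi(f) = \pi(f)$ on $S$.

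The second step would convert this partial agreement into multiplicativity via the multiplicative domain of the ucp map $\Psi$. For any self-adjoint $a \in \rA(K)$, the function $a^2 \in \rC(K)$ is convex: for every isometry $\alpha$, affineness of $a$ and the Kadison-Schwarz inequality give
\[
a^2(\alpha^* x \alpha) = (\alpha^* a(x) \alpha)^2 \leq \alpha^* a(x)^2 \alpha = \alpha^* a^2(x) \alpha,
\]
so $a^2 \in S$. The previous step then gives $\Psi(a^2) = \pi(a^2) = \iota(a)^2 = \Psi(a)^2$, which is precisely the condition $\Psi(a^*a) = \Psi(a)^*\Psi(a)$ placing $a$ into the multiplicative domain of $\Psi$. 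The multiplicative domain is a C*-subalgebra of $\rC(K)$ containing $\rA(K)_{sa}$, hence containing $\rA(K)$; since $\rA(K)$ generates $\rC(K)$ as a C*-algebra, the multiplicative domain equals $\rC(K)$. Therefore $\Psi$ is a $*$-homomorphism, and as $\Psi$ and $\pi$ are $*$-homomorphisms agreeing on the generating subsystem $\rA(K)$, they coincide on $\rC(K)$.

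The hard part will be the first step, as one has to pin down the equality of $\Psi$ and $\pi$ on a rich enough self-adjoint subspace before any multiplicative information is available. Both ingredients are essential: the nc Bauer hypothesis, used through Theorem~\ref{thm:characterization-nc-bauer-simplices-reps} so that the atomic representation splits across $\partial K$, and Lemma~\ref{lem:universally-measurable}, which places $\Psi(S)$ in the part of $\cmin(\rA(K))^{**}$ on which the atomic representation is faithful. Once agreement on $S$ is in hand, the transition to multiplicativity through the single observation that $a^2 \in S$ for affine self-adjoint $a$ is clean and short.
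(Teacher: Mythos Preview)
Your proposal is correct and follows essentially the same approach as the paper: both arguments show that the atomic representation of $\cmin(\rA(K))$ annihilates $\iota^{**}\circ\phi - \pi$ (using the nc Bauer hypothesis via Theorem~\ref{thm:characterization-nc-bauer-simplices-reps}), invoke Lemma~\ref{lem:universally-measurable} to deduce actual equality on squares $a^2$ with $a\in\rA(K)_{sa}$, and then pass to all of $\rC(K)$ through the multiplicative domain. The only differences are presentational: you first establish agreement on all of $S$ and then specialize, whereas the paper goes directly to $a^2$; and you make the convexity of $a^2$ explicit via Kadison--Schwarz, which the paper leaves implicit when applying Lemma~\ref{lem:universally-measurable}.
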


\begin{proof}
Let $x \in (\partial K)_n$ be an extreme point. Since $x$ is maximal, \cite{DK2019}*{Proposition 5.2.4} implies that $\delta_x$ factors through $\cmin(\rA(K))$. Hence letting $\ol{\delta_x} : \cmin(\rA(K)) \to \cM_n$ denote the corresponding induced map and let $\ol{\delta_x}^{**} : \cmin(\rA(K))^{**} \to \cM_n$ denote the unique normal extension. Then $\ol{\delta_x}^{**} \circ \iota^{**} \circ \phi$ is a representing map for $x$. However, since $x$ is extreme, \cite{DK2019}*{Theorem 6.1.6} implies that the representation $\delta_x : \rC(K) \to \cM_n$ is the unique representing map for $x$. Hence $\delta_x = \ol{\delta_x}^{**} \circ \iota^{**} \circ \phi$. 

It follows from Theorem \ref{thm:characterization-nc-bauer-simplices-reps} that every irreducible representation of $\cmin(\rA(K))$ is equivalent to a representation of the form $\ol{\delta_x} : \cmin(\rA(K)) \to \cM_n$ for an extreme point $x \in (\partial K)_n$. Therefore, we can identify the atomic representation of $\cmin(\rA(K))$ with the representation $\pi_a : \cmin(\rA(K)) \to \B(H_a)$ with $\pi_a = \oplus_{x \in \partial K} \delta_x$.

Let $H = \oplus_n \oplus_{x \in (\partial K)_n} H_n$ and let $\sigma : \rC(K) \to \B(H)$ denote the representation defined by $\sigma = \oplus_{x \in \partial K} \delta_x$. Then letting $\pi_a : \cmin(\rA(K))^{**} \to \B(H_a)$ denote the unique normal extension of $\pi_a$, it follows from above that
\[
\sigma = \pi_a \circ j \circ \pi = \pi_a^{**} \circ \iota^{**} \circ \phi.
\]
Hence the diagram commutes if we compose with $\pi_a$.

Fix a self-adjoint element $a \in\rA(K)$  and let $f = a^2$. Every element in $\cmin(\rA(K))$ is universally measurable in the sense of Section \ref{sec:universally-measurable}, and by Lemma \ref{lem:universally-measurable} $\iota^{**}(\varphi(a^2))$ is universally measurable, as well. By \cite{Ped1979}*{Theorem 4.3.15}, the atomic representation $\pi_a$ is faithful on the universally measurable elements of $\cmin(\rA(K))^{**}$, so we conclude that $$j(\pi(a))^2 = j(\pi(a^2)) = \iota^{**}(\varphi(a^2))$$. Since $j \circ \pi|_{\rA(K))} = \iota^{**} \circ \varphi|_{\rA(K)}$ we conclude that $a$ is in the multiplicative domain of $\iota^{**} \circ \varphi$. Since $j \circ \pi$ is a *-homomorphism we obtain that $j \circ \pi = \iota^{**} \circ \varphi$, as desired.
\end{proof}

\begin{cor} \label{cor:nc-bauer-simplex-factors-maximal}
Let $K$ be an nc Bauer simplex. An nc state $\mu : \rC(K) \to \cM_n$ factors through $\cmin(\rA(K))$ if and only if it is maximal in the nc Choquet order.
\end{cor}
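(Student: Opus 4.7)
The plan is to derive both directions of the corollary directly from Proposition \ref{prop:nc-bauer-simplex-diagram} and Corollary \ref{cor:formula-affinization-map}, together with the observation that the normal extension $\iota^{**} \colon \rA(K)^{**} \to \cmin(\rA(K))^{**}$ is injective. This injectivity holds because $\iota$ is a unital complete order embedding and hence an isometry, so its bidual is also isometric; equivalently, $\iota^{**}$ identifies $\rA(K)^{**}$ with a weak*-closed operator subsystem of $\cmin(\rA(K))^{**}$. I expect this to be the main technical point to verify explicitly, but it is a standard fact from operator system duality.

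For the ``only if'' direction, suppose $\mu = \bar{\mu} \circ \pi$ for some unital completely positive map $\bar{\mu} \colon \cmin(\rA(K)) \to \cM_n$, and let $\bar{\mu}^{**}$ denote its unique normal extension to $\cmin(\rA(K))^{**}$. Inserting the identity $\pi = \iota^{**} \circ \phi$ from Proposition \ref{prop:nc-bauer-simplex-diagram} gives $\mu = \bar{\mu}^{**} \circ \iota^{**} \circ \phi$. The composition $\bar{\mu}^{**} \circ \iota^{**}$ is a normal unital completely positive map on $\rA(K)^{**}$ whose restriction to $\rA(K)$ equals $\bar{\mu} \circ \iota = \mu|_{\rA(K)} = x$, where $x \in K_n$ is the barycenter of $\mu$. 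By uniqueness of the normal extension of $x$, we get $\bar{\mu}^{**} \circ \iota^{**} = x^{**}$, so $\mu = x^{**} \circ \phi = \mu_x$, which by Corollary \ref{cor:formula-affinization-map} is the unique maximal representing map for $x$.

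For the ``if'' direction, suppose $\mu$ is maximal with barycenter $x$. By Corollary \ref{cor:formula-affinization-map}, $\mu = x^{**} \circ \phi$. For $f \in \ker \pi$, Proposition \ref{prop:nc-bauer-simplex-diagram} gives $\iota^{**}(\phi(f)) = \pi(f) = 0$, so injectivity of $\iota^{**}$ forces $\phi(f) = 0$, whence $\mu(f) = x^{**}(\phi(f)) = 0$. Thus $\ker \pi \subseteq \ker \mu$, and since $\pi$ is a surjective $*$-homomorphism and $\mu$ is unital completely positive, $\mu$ descends to a unital completely positive map $\bar{\mu} \colon \cmin(\rA(K)) \to \cM_n$ with $\mu = \bar{\mu} \circ \pi$. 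The whole argument is short once Proposition \ref{prop:nc-bauer-simplex-diagram} is in hand; no further input from the spectral topology or from the Bauer hypothesis is needed beyond what was already used to establish that commutative diagram.
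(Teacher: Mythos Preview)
Your proof is correct, but it takes a different route from the paper's. The paper handles the two directions asymmetrically: for ``maximal $\Rightarrow$ factors'' it does not invoke Proposition~\ref{prop:nc-bauer-simplex-diagram} at all, instead passing to a minimal representation $(x,\alpha)$ of $\mu$, using \cite{DK2019}*{Theorem~8.3.7} to conclude that $x$ is a maximal point, and then \cite{DK2019}*{Proposition~5.2.4} to see that $\delta_x$ (and hence $\mu = \alpha^* \delta_x \alpha$) factors through $\cmin(\rA(K))$. This argument makes no use of the Bauer hypothesis and thus shows that this implication holds for any compact nc convex set. For ``factors $\Rightarrow$ maximal'' the paper again works through a minimal Stinespring representation, arguing that $\delta_x$ itself must factor through $\cmin(\rA(K))$ and then applying Proposition~\ref{prop:nc-bauer-simplex-diagram} to $\delta_x$ rather than to $\mu$ directly.

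By contrast, you bypass representations entirely and argue purely at the level of the commutative diagram: the key extra ingredient you need is the injectivity of $\iota^{**}$, which lets you deduce $\ker\pi \subseteq \ker\phi$ and hence that every maximal $\mu = x^{**}\circ\phi$ kills $\ker\pi$. This is slicker and more self-contained, but it obscures the fact that one direction is general; your argument genuinely uses Proposition~\ref{prop:nc-bauer-simplex-diagram} (and thus the Bauer hypothesis) for both implications. One small notational point: Proposition~\ref{prop:nc-bauer-simplex-diagram} really asserts $j\circ\pi = \iota^{**}\circ\phi$ with $j$ the inclusion $\cmin(\rA(K))\hookrightarrow\cmin(\rA(K))^{**}$, so when you write $\mu = \bar\mu^{**}\circ\iota^{**}\circ\phi$ you are implicitly using $\bar\mu^{**}\circ j = \bar\mu$; this is harmless but worth making explicit.
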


\begin{proof}
Let $\mu : \rC(K) \to \cM_m$ be an nc state and let $(x,\alpha) \in K_n \times \cM_{n,m}$ be a minimal representation for $\mu$. 

If $\mu$ is maximal in the nc Choquet order, then by \cite{DK2019}*{Theorem 8.3.7}, $x$ is a maximal point. Hence by \cite{DK2019}*{Proposition 5.2.4}, the representation $\delta_x : \rC(K) \to \cM_n$ factors through $\cmin(\rA(K))$, and it follows that $\mu$ factors through $\cmin(\rA(K))$. 

Conversely, if $\mu$ factors through $\cmin(\rA(K))$, then by applying Stinespring's dilation theorem to the induced map $\ol{\mu} : \cmin(\rA(K)) \to \cM_m$, we obtain a minimal representation of $\mu$ that factors through $\cmin(\rA(K))$. By the uniqueness of minimal representations (see \cite{DK2019}*{Remark 5.2.2}), it follows that the representation $\delta_x$ factors through $\cmin(\rA(K))$. 

Let $\ol{\delta_x} : \cmin(\rA(K)) \to \cM_n$ denote the induced representation and let $\ol{\delta_x}^{**} : \cmin(\rA(K))^{**} \to \cM_n$ denote the unique normal extension. Then by Proposition \ref{prop:nc-bauer-simplex-diagram},
\[
\delta_x = \ol{\delta_x} \circ \pi = \ol{\delta_x}^{**} \circ \iota^{**} \circ \phi = x^{**} \circ \phi.
\]
Hence by Corollary \ref{cor:formula-affinization-map}, $\delta_x$ is nc Choquet maximal. It follows from \cite{DK2019}*{Proposition 8.3.6} and \cite{DK2019}*{Proposition 5.2.3} that $x$ is maximal. Therefore, by \cite{DK2019}*{Theorem 8.3.7}, $\mu$ is maximal in the nc Choquet order.
\end{proof}

\begin{thm} \label{thm:characterization-nc-bauer-simplex-c-star}
Let $K$ be a compact nc convex set. Then $K$ is an nc Bauer simplex if and only if $\rA(K)$ is unitally completely order isomorphic to a unital C*-algebra, namely $\cmin(\rA(K))$.
\end{thm}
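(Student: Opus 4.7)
The plan is to exploit the dual equivalence from Theorem \ref{thm:nc_kadison}. We always have the canonical unital complete order embedding $\iota: \rA(K) \to \cmin(\rA(K))$, and $\rA(K)$ is unitally completely order isomorphic to $\cmin(\rA(K))$ precisely when $\iota$ is surjective. Dually, this is equivalent to the restriction map $\eta: L \to K$ (where $L$ denotes the nc state space of $\cmin(\rA(K))$, sent to $K$ by $\mu \mapsto \mu \circ \iota$) being an affine homeomorphism. Since $\eta$ is always a continuous surjective affine nc map (surjectivity by Hahn-Banach/Arveson extension) and each $L_n$, $K_n$ is compact Hausdorff in the relevant point-weak* topology, the problem reduces at each level to showing that $\eta$ is injective.

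For the forward direction, assume $K$ is an nc Bauer simplex and suppose $\mu_1, \mu_2 \in L_n$ satisfy $\mu_1 \circ \iota = \mu_2 \circ \iota = x \in K_n$. Composing with the canonical surjection $\pi : \rC(K) \twoheadrightarrow \cmin(\rA(K))$, I would observe that $\mu_1 \circ \pi$ and $\mu_2 \circ \pi$ are both nc states on $\rC(K)$ representing $x$ and factoring through $\cmin(\rA(K))$. By Corollary \ref{cor:nc-bauer-simplex-factors-maximal}, both are maximal in the nc Choquet order; by the nc simplex property, both must equal the unique maximal representing map $\mu_x$. Since $\pi$ is surjective, $\mu_1 = \mu_2$, giving injectivity of $\eta$ and hence surjectivity of $\iota$.

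For the backward direction, assume $\iota$ is a unital complete order isomorphism and set $\phi := \iota^{-1} \circ \pi : \rC(K) \to \rA(K)$, which is unital completely positive and satisfies $\phi|_{\rA(K)} = \id_{\rA(K)}$. Theorem \ref{thm:characterization-nc-simplex-expectation} then yields that $K$ is an nc simplex. To verify the Bauer condition, by Proposition \ref{prop:closure-ext-bdy-spec-top} it suffices to show that any $x \in \Irr(K)_n$ whose point evaluation $\delta_x : \rC(K) \to \cM_n$ factors as $\delta_x = \overline{\delta_x} \circ \pi$ lies in $\partial K$. For such $x$, the identity $\overline{\delta_x} \circ \iota = x$ together with surjectivity of $\iota$ forces $\overline{\delta_x} = x \circ \iota^{-1}$, whence $x \circ \phi = x \circ \iota^{-1} \circ \pi = \overline{\delta_x} \circ \pi = \delta_x$. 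Corollary \ref{cor:formula-affinization-map} identifies $x \circ \phi$ with the unique nc Choquet maximal representing map $\mu_x$, so $\mu_x = \delta_x$, making $x$ a maximal point. Combined with the irreducibility of $\delta_x$, \cite{DK2019}*{Theorem 6.1.6} then gives $x \in \partial K$.

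The substantive content of the theorem lies in the forward direction, and the crux is the uniqueness of ucp extensions from $\rA(K)$ to $\cmin(\rA(K))$. This is where the two defining properties of an nc Bauer simplex cooperate: the closedness of $\partial K$ in $\Irr(K)$ upgrades ``maximal in the nc Choquet order'' to the stronger condition ``factors through $\cmin(\rA(K))$'' via Corollary \ref{cor:nc-bauer-simplex-factors-maximal}, and then the nc simplex property delivers uniqueness. Without the Bauer condition the forward implication collapses, since there would exist maximal representing maps not factoring through $\cmin(\rA(K))$, and the duality argument would lose its handle on the extension problem.
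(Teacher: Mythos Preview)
Your proof is correct and follows essentially the same approach as the paper. In the forward direction, both arguments establish that the restriction map $\eta : L \to K$ is an affine homeomorphism using Corollary~\ref{cor:nc-bauer-simplex-factors-maximal} together with the uniqueness of maximal representing maps; the paper constructs the inverse $\overline{\theta}(x) = \overline{\mu_x}$ explicitly while you verify injectivity of $\eta$ directly, but these are two sides of the same coin. In the backward direction the paper is terser, citing \cite{DK2019}*{Example 6.1.5} to identify $\partial K$ with the irreducible representations of the C*-algebra and then invoking Theorem~\ref{thm:characterization-nc-bauer-simplices-reps}, whereas you unwind this within the paper by constructing $\phi = \iota^{-1} \circ \pi$ and applying Corollary~\ref{cor:formula-affinization-map}; the content is the same.
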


\begin{proof}
Suppose that $\rA(K)$ is unitally completely order isomorphic to a unital C*-algebra $A$. Then $\cmin(\rA(K)) = A$ and by \cite{DK2019}*{Example 6.1.5}, the extreme boundary $\partial K$ is precisely the set of irreducible representations of $A$. Hence by Theorem \ref{thm:characterization-nc-bauer-simplices-reps}, $K$ is an nc Bauer simplex.

Conversely, suppose that $K$ is an nc Bauer simplex. Let $L$ denote the nc state space of $\cmin(\rA(K))$. We will show that $K$ and $L$ are affinely homeomorphic. The result will then follow by \cite{DK2019}*{Corollary 3.2.6}.

For $x \in K_n$, let $\mu_x : \rC(K) \to \cM_n$ denote the unique nc Choquet maximal representing map for $x$. By Corollary \ref{cor:nc-bauer-simplex-factors-maximal}, $\mu_x$ factors through $\cmin(\rA(K))$. Let $\ol{\mu_x} : \cmin(\rA(K)) \to \cM_n$ denote the induced map. It follows as in the proof of Proposition \ref{prop:map-to-max-rep-measure-maximal} that the map $\ol{\theta} : K \to L$ defined by $\ol{\theta}(x) = \ol{\mu_x}$ is affine.

By Corollary \ref{cor:nc-bauer-simplex-factors-maximal}, for a map $\ol{\mu} \in L_n$, the nc state $\mu : \rC(K) \to \cM_n$ defined by $\mu = \ol{\mu} \circ \pi$ is maximal in the nc Choquet order. Hence $\mu = \mu_x$ and $\ol{\mu} = \ol{\mu_x}$, where $x \in K_n$ is the barycenter of $\mu$. Hence by the uniqueness of the nc Choquet maximal representing maps, $\ol{\theta}$ is a bijection. The inverse of $\ol{\theta}$ is the restriction map corresponding to the inclusion $\iota(\rA(K)) \subseteq \cmin(\rA(K))$, where $\iota : \rA(K) \to \cmin(\rA(K))$ denote the canonical unital complete order embedding. Since the restriction map is continuous and each $K_n$ is compact, it follows that $\ol{\theta}$ is an affine homeomorphism.
\end{proof}

\section{Noncommutative Poulsen simplices} \label{sec:nc-poulsen-simplices}

Poulsen \cite{Pou1961} constructed an example of a metrizable simplex $P$, now referred to as the Poulsen simplex, with dense extreme boundary, i.e. such that $\rC(\ol{\partial P}) = \rC(P)$. Lindenstrauss, Olsen and Sternfeld \cite{LOS1978} showed that $P$ is the unique metrizable simplex with this property. In this section we will consider noncommutative analogues of the Poulsen simplex.

Kirchberg and Wassermann \cite{KW1998}*{Theorem 17} constructed a separable nuclear C*-system $\bX$ with the property that $\cmin(\bX) = \cmax(\bX)$, which is a noncommutative analogue of the property characterizing the classical Poulsen simplex. Subsequently, Lupini \cite{Lup2018-u}*{Theorem 1.4} (see also \cites{Lup2016,Lup2018-f}) showed  that $\bX$ is the unique nuclear separable operator system satisfying this property. Since the Kirchberg-Wassermann operator system $\bX$ is a C*-system, Theorem \ref{thm:characterization-nc-simplex-c-star-system} implies that the nc state space of $\bX$ is an nc simplex.

\begin{defn} \label{defn:nc-poulsen-simplex}
A compact nc convex set $K$ is an {\em nc Poulsen simplex} if it is an nc simplex and the extreme boundary $\partial K$ is a dense subset of the set $\Irr(K)$ of irreducible points in $K$ with respect to the spectral topology.
\end{defn}

\begin{prop} \label{prop:equality-cmin-cmax}
Let $K$ be a compact nc convex set. Then the equality $\cmin(\rA(K)) = \rC(K)$ holds if and only if the extreme boundary $\partial K$ is dense in the set $\Irr(K)$ of irreducible points in $K$ with respect to the spectral topology.
\end{prop}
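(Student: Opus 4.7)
The plan is to deduce this proposition directly from Proposition \ref{prop:closure-ext-bdy-spec-top} together with the universal property of $\cmin(\rA(K))$ and the elementary fact that irreducible representations separate points of a C*-algebra. The key object is the canonical surjective $*$-homomorphism $\pi : \rC(K) \to \cmin(\rA(K))$. Since $\pi$ is already surjective, the equality $\cmin(\rA(K)) = \rC(K)$ is equivalent to $\pi$ being injective, i.e., to $\ker\pi = 0$.

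For the forward direction, I would argue as follows. If $\cmin(\rA(K)) = \rC(K)$, then $\pi$ is an isomorphism, so in particular the representation $\delta_x : \rC(K) \to \cM_n$ trivially factors through $\cmin(\rA(K))$ for every $x \in \Irr(K)$. By Proposition \ref{prop:closure-ext-bdy-spec-top}, this is precisely the condition that every irreducible point lies in the closure of $\partial K$ with respect to the spectral topology, i.e., $\partial K$ is dense in $\Irr(K)$.

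For the reverse direction, assume $\partial K$ is dense in $\Irr(K)$. Again by Proposition \ref{prop:closure-ext-bdy-spec-top}, this means that for every $x \in \Irr(K)$, the representation $\delta_x$ factors through $\cmin(\rA(K))$, equivalently $\ker\pi \subseteq \ker\delta_x$. As noted in the preceding section, the map $\Irr(K) \to \Irrep(\rC(K)) : x \mapsto [\delta_x]$ is surjective, so every irreducible representation of $\rC(K)$ is (unitarily equivalent to) some $\delta_x$ with $x \in \Irr(K)$. Therefore
\[
\ker\pi \subseteq \bigcap_{x \in \Irr(K)} \ker\delta_x = 0,
\]
where the last equality uses the standard fact that the intersection of the kernels of all irreducible representations of a C*-algebra is trivial. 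Hence $\pi$ is injective, so $\cmin(\rA(K)) = \rC(K)$.

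There is no real obstacle here: all the substantive work has been carried out in Proposition \ref{prop:closure-ext-bdy-spec-top} (where the identification $\ker\pi = \bigcap_{y \in \partial K}\ker\delta_y$ was established) and in the definition of the spectral topology as the pullback of the hull-kernel topology. The only thing worth verifying carefully is that density in $\Irr(K)$ for the pullback topology is indeed equivalent to density of the image in $\Irrep(\rC(K))$ for the hull-kernel topology, which is immediate from surjectivity of $\Irr(K) \to \Irrep(\rC(K))$.
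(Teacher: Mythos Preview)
Your proof is correct and follows essentially the same approach as the paper: both reduce the equality $\cmin(\rA(K)) = \rC(K)$ to the injectivity of the canonical quotient map $\pi$, and then use the characterization of the spectral closure of $\partial K$ in terms of factoring through $\cmin(\rA(K))$ together with the fact that irreducible representations separate points. The only cosmetic difference is that you cite Proposition~\ref{prop:closure-ext-bdy-spec-top} directly, whereas the paper re-derives the relevant kernel inclusion from the hull-kernel definition inline.
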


\begin{proof}
Let $\pi : \rC(K) \to \cmin(\rA(K))$ denote the canonical surjective homomorphism. Then by \cite{DK2019}*{}, $\ker \pi = \cap_{y \in \partial K} \ker \delta_y$. 

Suppose that $\rC(K) = \cmin(\rA(K))$. Then $\ker \pi = 0$, so for an irreducible point $x \in \Irr(K)$, the corresponding representation $\delta_x$ satisfies $\ker \delta_x \supseteq \ker \pi = \cap_{y \in \partial K} \ker \delta_y$. Then by the definition of the spectral topology in terms of the hull-kernel topology on the spectrum of $\rC(K)$, $x$ belongs to the closure of $\partial K$ with respect to the spectral topology. Hence $\partial K$ is dense in $\Irr(K)$ with respect to the spectral topology. 

Conversely, suppose that $\partial K$ is a dense subset of $\Irr(K)$ with respect to the spectral topology. Then it follows from the definition of the spectral topology in terms of the hull-kernel topology on the spectrum of $\rC(K)$ that for every point $x \in \Irr(K)$, the corresponding representation $\delta_x$ satisfies $\ker \delta_x \supseteq \cap_{y \in \partial K} \ker \delta_y = \ker \pi$. Hence $\delta_x$ factors through $\cmin(\rA(K))$, and so every irreducible representation of $\rC(K)$ factors through $\cmin(\rA(K))$, implying $\cmin(\rA(K)) = \rC(K)$. 
\end{proof}

The next result follows immediately from Definition \ref{defn:nc-poulsen-simplex} and Proposition \ref{prop:equality-cmin-cmax}.

\begin{thm} \label{thm:nc-poulsen-simplex}
An nc simplex $K$ is an nc Poulsen simplex if and only if $\cmin(\rA(K)) = \rC(K)$.
\end{thm}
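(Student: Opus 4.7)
The plan is to observe that this theorem is essentially a direct combination of the definition of an nc Poulsen simplex with the preceding Proposition \ref{prop:equality-cmin-cmax}, so the work of the proof has already been done and only needs to be assembled. I would not attempt any new computation; instead I would simply chain two equivalences.

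First I would recall Definition \ref{defn:nc-poulsen-simplex}: an nc simplex $K$ is an nc Poulsen simplex precisely when the extreme boundary $\partial K$ is dense in the set $\Irr(K)$ of irreducible points of $K$ with respect to the spectral topology. Next, I would cite Proposition \ref{prop:equality-cmin-cmax}, which asserts that $\cmin(\rA(K)) = \rC(K)$ if and only if $\partial K$ is dense in $\Irr(K)$ with respect to the spectral topology. Combining these two statements yields the desired equivalence: assuming $K$ is an nc simplex, $K$ is an nc Poulsen simplex if and only if $\partial K$ is dense in $\Irr(K)$ in the spectral topology, which in turn holds if and only if $\cmin(\rA(K)) = \rC(K)$.

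There is essentially no obstacle here. The substantive content of the theorem lives entirely in Proposition \ref{prop:equality-cmin-cmax}, whose proof uses that the kernel of the canonical surjection $\pi : \rC(K) \to \cmin(\rA(K))$ equals $\cap_{y \in \partial K} \ker \delta_y$, together with the definition of the spectral topology as the pullback of the hull–kernel topology on $\Irrep(\rC(K))$. The only thing the present theorem adds to that proposition is the hypothesis that $K$ is an nc simplex, which is already built into the notion of an nc Poulsen simplex and is not used in the equivalence with $\cmin(\rA(K)) = \rC(K)$ at all. Consequently, the proof I would write would be a single sentence stating that the result is immediate from Definition \ref{defn:nc-poulsen-simplex} and Proposition \ref{prop:equality-cmin-cmax}.
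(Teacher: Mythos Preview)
Your proposal is correct and matches the paper's approach exactly: the paper does not even include a formal proof environment, stating just before the theorem that the result follows immediately from Definition \ref{defn:nc-poulsen-simplex} and Proposition \ref{prop:equality-cmin-cmax}. Your observation that the nc simplex hypothesis plays no role in the equivalence and is merely part of the definition of nc Poulsen simplex is also correct.
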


The next result follows immediately from Theorem \ref{thm:nc-poulsen-simplex} and the uniqueness of the Kirchberg-Wassermann operator system from \cite{Lup2018-u}*{Theorem 1.4}. 

\begin{cor} \label{cor:unique-nuclear-nc-poulsen-simplex}
Let $\bP$ denote the nc state space of the Kirchberg-Wassermann operator system. Then $\bP$ is the unique nc Poulsen simplex with the property that $\rA(\bP)$ is separable and nuclear.
\end{cor}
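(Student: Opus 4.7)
The plan is to reduce the uniqueness question for nc Poulsen simplices (with separable nuclear affine nc function system) to Lupini's uniqueness theorem for the Kirchberg--Wassermann operator system, via the duality between compact nc convex sets and operator systems.

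First, I would observe that the C*-algebra $\rC(K)$ is canonically isomorphic to $\cmax(\rA(K))$. Indeed, $\rC(K)$ is generated as a C*-algebra by $\rA(K)$, and for every unital complete order embedding $\phi \colon \rA(K) \to B$ into a unital C*-algebra $B$, the composition with the point evaluations at points of the nc state space of $B$ yields a unital completely positive map $\rA(K) \to \rC(K)$ that, by the universal property of $\rC(K)$ stated in the discussion around Definition \ref{defn:point-evaluation}, extends to a $*$-homomorphism $\cmax(\rA(K)) \to \rC(K)$ which is the identity on $\rA(K)$. Combined with the fact that $\rC(K)$ itself satisfies the defining universal property of $\cmax(\rA(K))$ (every unital completely positive map from $\rA(K)$ into $\cM_n$ extends uniquely to a $*$-homomorphism on $\rC(K)$), one concludes $\rC(K) = \cmax(\rA(K))$.

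Second, combining this identification with Theorem \ref{thm:nc-poulsen-simplex}, an nc simplex $K$ is an nc Poulsen simplex if and only if $\cmin(\rA(K)) = \cmax(\rA(K))$. If $K$ is any nc Poulsen simplex with $\rA(K)$ separable and nuclear, then $\rA(K)$ is a separable nuclear operator system satisfying $\cmin(\rA(K)) = \cmax(\rA(K))$. Lupini's uniqueness theorem \cite{Lup2018-u}*{Theorem 1.4} for the Kirchberg--Wassermann operator system then forces $\rA(K)$ to be unitally completely order isomorphic to the Kirchberg--Wassermann operator system $\bX = \rA(\bP)$.

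Third, by the duality corollary following Theorem \ref{thm:nc_kadison}, the unital complete order isomorphism $\rA(K) \cong \rA(\bP)$ yields an affine homeomorphism $K \cong \bP$. In the other direction, $\bP$ itself is an nc Poulsen simplex: its affine nc function system $\bX$ is nuclear (hence an nc simplex by Corollary \ref{cor:nuclear-simplex}, or equivalently a C*-system), and the defining property $\cmin(\bX) = \cmax(\bX)$ gives $\cmin(\rA(\bP)) = \rC(\bP)$, so Theorem \ref{thm:nc-poulsen-simplex} applies.

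The only conceptual step that requires any care is the identification $\rC(K) = \cmax(\rA(K))$, and even that is essentially immediate from the universal properties already established in the paper and in \cite{DK2019}. No further technical obstacle is anticipated, since Lupini's theorem is invoked as a black box.
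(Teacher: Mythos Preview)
Your approach is correct and matches the paper's: the corollary follows immediately from Theorem~\ref{thm:nc-poulsen-simplex} together with Lupini's uniqueness theorem, via the categorical duality. Your first paragraph's justification of $\rC(K) = \cmax(\rA(K))$ is somewhat garbled (the sentence about ``composition with the point evaluations \ldots\ yields a unital completely positive map $\rA(K) \to \rC(K)$'' does not parse), but this identification is a standard fact from \cite{DK2019}*{Theorem~4.4.3} and can simply be cited; the rest of your argument is fine.
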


Based on the uniqueness of the classical Poulsen simplex, one might suspect that there is a unique nc Poulsen simplex in the separable case. Corollary \ref{cor:unique-nuclear-nc-poulsen-simplex} shows that this is true under the additional assumption that the corresponding operator system is nuclear. However, we will see that this is false in general.

Kirchberg and Wassermann show in \cite{KW1998}*{Proposition 16} that every operator system $S$ embeds into a C*-system $X$ satisfying $\cmin(X) = \cmax(X)$. Moreover, if $S$ is separable, then $X$ can be chosen to be separable. The next result is a restatement of this result using the the dual equivalence between operator systems and compact nc convex sets (see \cite{DK2019}*{Section 3}).

\begin{prop}
Let $K$ be a compact nc convex set. Then there is an nc Poulsen simplex $L$ and a surjective affine nc map $\theta : L \to K$. If $\rA(K)$ is separable, then $L$ can be chosen so that $\rA(L)$ is separable.
\end{prop}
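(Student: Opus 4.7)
The plan is to dualize to the operator system language and invoke the Kirchberg--Wassermann embedding theorem \cite{KW1998}*{Proposition 16} already cited just above. By the duality of Theorem \ref{thm:nc_kadison}, a continuous affine nc map $\theta \colon L \to K$ corresponds to a unital complete order homomorphism $T \colon \rA(K) \to \rA(L)$; moreover when $T$ is a unital complete order \emph{embedding}, the dual map $\theta$ is surjective, because Arveson's extension theorem extends any nc state $x \colon \rA(K) \to \cM_n$ along the embedding $T$ to a ucp map $\tilde{x} \colon \rA(L) \to \cM_n$, and then $\theta(\tilde{x}) = \tilde{x} \circ T = x$. So it is enough to produce an operator system $X$ together with a unital complete order embedding $\rA(K) \hookrightarrow X$ such that (i) $X$ is a C*-system, (ii) $\cmin(X) = \cmax(X)$, and (iii) $X$ can be taken separable whenever $\rA(K)$ is separable.

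This is precisely the conclusion of \cite{KW1998}*{Proposition 16} applied to $S = \rA(K)$. Letting $L$ be the compact nc convex set with $\rA(L) \cong X$ furnished by Theorem \ref{thm:nc_kadison}, condition (i) combined with Theorem \ref{thm:characterization-nc-simplex-c-star-system} yields that $L$ is an nc simplex. Using the identification $\rC(L) = \cmax(\rA(L))$ (which is the universal property of $\rC(L)$; this is the specialization of Corollary \ref{cor:tensor-product-char-nc-simplices} to a one-point nc convex set), condition (ii) translates into $\cmin(\rA(L)) = \rC(L)$, and Theorem \ref{thm:nc-poulsen-simplex} then upgrades $L$ from an nc simplex to an nc Poulsen simplex. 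The embedding $\rA(K) \hookrightarrow \rA(L)$ dualizes, as discussed above, to the required surjective affine nc map $\theta \colon L \to K$, and the separable conclusion is immediate from the separable version of the Kirchberg--Wassermann statement.

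The genuine analytic content has already been carried out by Kirchberg and Wassermann; no real obstacle arises in the argument above beyond translating their operator-system statement into the nc convex language of this paper. The two conceptual translations needed, namely C*-system $\leftrightarrow$ nc simplex (Theorem \ref{thm:characterization-nc-simplex-c-star-system}) and the condition $\cmin = \cmax$ $\leftrightarrow$ nc Poulsen simplex (Theorem \ref{thm:nc-poulsen-simplex}), are both established earlier in the paper, so what remains is bookkeeping.
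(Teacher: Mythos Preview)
Your proposal is correct and follows exactly the approach indicated in the paper: the proposition is stated there as a direct restatement of \cite{KW1998}*{Proposition 16} via the duality of Theorem \ref{thm:nc_kadison}, and you have simply spelled out that restatement in detail, including the surjectivity of the dual map via Arveson's extension theorem. The only quibble is the parenthetical justification of $\rC(L) = \cmax(\rA(L))$ via Corollary \ref{cor:tensor-product-char-nc-simplices}; this identification is just the universal property of $\rC(L)$ (see \cite{DK2019}*{Theorem 4.4.3}) and does not require the tensor product corollary, but the underlying claim is correct regardless.
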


\begin{example}
Let $A$ be a separable non-exact C*-algebra. For example, we can take $A = \ca(\bF_2)$, the full C*-algebra of the free group (see e.g. \cite{BO2008}*{Corollary 3.7.11}). By \cite{KW1998}*{Proposition 16}, $A$ embeds into a separable C*-system $X$ satisfying $\cmin(X) = \cmax(X)$. Let $K$ denote the nc state space of $X$. Then by Theorem \ref{thm:nc-poulsen-simplex}, $K$ is an nc Poulsen simplex. Since $A$ is non-exact, $X$ is non-nuclear. In particular, $X$ is not the Kirchberg-Wassermann operator system. Hence there are multiple nc Poulsen simplices with corresponding operator systems that are separable.
\end{example}

\section{Noncommutative dynamics} \label{sec:nc-dynamics}

In this section we consider a notion of affine dynamical system over a compact nc convex set. We will show that if the compact nc convex set is an nc simplex, then the set of invariant points is an nc simplex.

\begin{defn} \label{defn:nc-dynamical-system}
An {\em nc dynamical system} is a triple $(S,\Gamma,\sigma)$ consisting of an operator system $S$, a discrete group $\Gamma$ and a group homomorphism $\sigma : \Gamma \to \operatorname{Aut}(S)$, where $\operatorname{Aut}(S)$ denotes the group of unital complete order isomorphisms of $S$. In the special case when $S$ is a C*-algebra, say $S = A$, then we will refer to the nc dynamical system $(A,\Gamma,\sigma)$ as a  {\em C*-dynamical system}. We will typically write $(S,\Gamma)$ instead of $(S,\Gamma,\sigma)$ and $s f$ instead of $\sigma_s(f)$ for $s \in \Gamma$ and $f \in S$.
\end{defn}

By the nc Kadison duality, the category of operator systems is dually equivalent to the category of compact nc convex sets. Applying this duality to an nc dynamical system results in an affine nc dynamical system. A similar notion was considered in \cite{KS2019} within the setting of matrix convexity.

\begin{defn} \label{defn:nc-affine-dynamical-system}
An \em {affine nc dynamical system} is a triple $(K,\Gamma,\kappa)$ consisting of a compact nc convex set $K$, a discrete group $\Gamma$ and a group homomorphism $\kappa : \Gamma \to \operatorname{Homeo}(K)$, where $\operatorname{Homeo}(K)$ denotes the group of affine self homeomorphisms of $K$ in the sense of Definition \ref{defn:nc_func}. We write $K^\Gamma$ for the fixed point set $K^\Gamma = \{x \in K : \kappa_s(x) = x \text{ for all } s \in \Gamma\}$. We will typically write $(K,\Gamma)$ instead of $(K,\Gamma,\kappa)$ and $s x$ instead of $\kappa_s(x)$ for $s \in \Gamma$ and $x \in K$.
\end{defn}

For an nc dynamical system of the form $(\rA(K),\Gamma)$ for a compact nc convex set $K$, the corresponding affine nc dynamical system $(K,\Gamma)$ is determined by
\[
a(s x) = (s^{-1} a)(x), \qfor s \in \Gamma,\ a \in \rA(K),\ x \in K.
\]
Conversely, for an affine nc dynamical system $(K,\Gamma)$, the corresponding nc dynamical system $(\rA(K),\Gamma)$ is determined by
\[
(s a)(x) = a(s^{-1} x), \qfor s \in \Gamma,\ a \in \rA(K),\ x \in K.
\]

The next result is an equivariant version of \cite{DK2019}*{Theorem 3.2.5}.

\begin{prop}
For a fixed group $\Gamma$, the category of nc dynamical systems over $\Gamma$ with morphisms consisting of equivariant unital complete order homomorphisms is dual to the category of affine nc dynamical systems with morphisms consisting of equivariant continuous affine nc maps.
\end{prop}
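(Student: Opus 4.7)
The plan is to bootstrap this result directly from the non-equivariant dual equivalence of Theorem \ref{thm:nc_kadison}. The only new content is that a $\Gamma$-action on one side corresponds to a $\Gamma$-action on the other, and that morphisms are equivariant on one side if and only if they are equivariant on the other. Neither statement requires new techniques; both follow by applying the contravariant functor $\rA$ (or its inverse $\rA^{-1}$) to each element of the given group of automorphisms.

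First, I would extend the two functors to the equivariant setting. Given an nc dynamical system $(S,\Gamma,\sigma)$, define an action $\kappa : \Gamma \to \operatorname{Homeo}(K)$ on the nc state space $K = \rA^{-1}(S)$ by $\kappa_s := \rA^{-1}(\sigma_{s^{-1}})$, i.e.\ $\kappa_s(x) = x \circ \sigma_{s^{-1}}$ for $x \in K_n$. Because $\rA^{-1}$ is a contravariant functor sending unital complete order isomorphisms to affine nc homeomorphisms, the inversion $s \mapsto s^{-1}$ turns the contravariance into covariance, so $\kappa$ is a genuine group homomorphism; one checks that the induced formula $a(\kappa_s(x)) = (\sigma_{s^{-1}} a)(x)$ matches the convention $a(sx) = (s^{-1}a)(x)$ stated in the paper. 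The reverse assignment is analogous: given $(K,\Gamma,\kappa)$, set $\sigma_s := \rA(\kappa_{s^{-1}})$ on $\rA(K)$, yielding a group homomorphism $\sigma : \Gamma \to \operatorname{Aut}(\rA(K))$ with $(\sigma_s a)(x) = a(\kappa_{s^{-1}}(x))$.

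Next, I would verify the morphism correspondence. A unital complete order homomorphism $\pi : S \to T$ induces an affine nc map $\theta := \rA^{-1}(\pi) : \rA^{-1}(T) \to \rA^{-1}(S)$ given by $\theta(y) = y \circ \pi$. Then $\pi$ is $\Gamma$-equivariant, i.e.\ $\pi \circ \sigma_s^S = \sigma_s^T \circ \pi$ for all $s$, iff for every $y$ and every $s$ one has $y \circ \pi \circ \sigma_s^S = y \circ \sigma_s^T \circ \pi$, which after replacing $s$ by $s^{-1}$ is exactly the equivariance $\theta \circ \kappa_s^T = \kappa_s^S \circ \theta$. So equivariant morphisms on one side correspond bijectively to equivariant morphisms on the other, and the usual functoriality (preservation of identities and compositions) is inherited from Theorem \ref{thm:nc_kadison}. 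Finally, the unit and counit of the adjunction are already $\Gamma$-equivariant by construction, since at each object they coincide with the unit and counit of the underlying non-equivariant equivalence, so the equivariant functors form a dual equivalence of categories.

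The main (and essentially only) subtle point is the book-keeping of the inversion $s \mapsto s^{-1}$: without it, contravariance of $\rA$ would turn a left $\Gamma$-action into a right $\Gamma$-action, and the formulas relating $sa$ and $sx$ would not be consistent group homomorphisms. Once this is handled, the proof reduces to pushing definitions through Theorem \ref{thm:nc_kadison}, and there is no substantive analytic or combinatorial obstacle.
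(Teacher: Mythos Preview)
Your proposal is correct and matches the paper's implicit approach: the paper states this proposition without proof, describing it only as ``an equivariant version of \cite{DK2019}*{Theorem 3.2.5}'', which is precisely the bootstrapping-from-Theorem~\ref{thm:nc_kadison} argument you carry out. Your handling of the contravariance via $s \mapsto s^{-1}$ is the right bookkeeping, and there is nothing further to add.
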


Let $K$ be a compact nc convex set and let $(\rA(K),\Gamma,\sigma)$ be an nc dynamical system. The universal property of $\rC(K)$ implies that for $s \in \Gamma$, the unital complete order isomorphism $\sigma_s$ has a unique extension to a unital complete order isomorphism of $\rC(K)$ that we continue to denote by $\sigma_s$. Thus we obtain a C*-dynamical system $(\rC(K),\Gamma,\sigma)$. As above we will typically write this as $(\rC(K),\Gamma)$, and write $sf$ instead of $\sigma_s(f)$ for $s \in \Gamma$ and $f \in \rC(K)$.

\begin{lem} \label{lem:fixed-point-set-nc-convex}
Let $(K,\Gamma)$ be an affine nc dynamical system. Then the fixed point set $K^\Gamma$ is a compact nc convex set.
\end{lem}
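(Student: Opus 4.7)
The plan is to verify directly that $K^\Gamma$ inherits from $K$ the three defining properties of a compact nc convex set: closure under direct sums, closure under compressions by isometries, and compactness of each graded component $(K^\Gamma)_n$. The key fact driving each verification is that every $\kappa_s$, being an element of $\operatorname{Homeo}(K)$, is a continuous affine nc homeomorphism of $K$, so it respects direct sums, is equivariant with respect to isometries, and restricts to a continuous map on each $K_n$.

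First I would handle the algebraic side. Given a family $\{x_i \in (K^\Gamma)_{n_i}\}$ and isometries $\{\alpha_i \in \cM_{n,n_i}\}$ with $\sum \alpha_i \alpha_i^* = 1_n$, the sum $x = \sum \alpha_i x_i \alpha_i^*$ lies in $K_n$ by the nc convexity of $K$, and applying property (2) in the definition of affine nc map to $\kappa_s$ gives
\[
\kappa_s(x) = \sum \alpha_i \kappa_s(x_i) \alpha_i^* = \sum \alpha_i x_i \alpha_i^* = x
\]
for every $s \in \Gamma$, so $x \in (K^\Gamma)_n$. A parallel argument using property (3) of Definition \ref{defn:nc_func} for affine nc maps shows that for $x \in (K^\Gamma)_n$ and any isometry $\beta \in \cM_{m,n}$, the compression $\beta^* x \beta \in K_m$ satisfies $\kappa_s(\beta^* x \beta) = \beta^* \kappa_s(x) \beta = \beta^* x \beta$, so it lies in $(K^\Gamma)_m$. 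This shows $K^\Gamma$ is an nc convex subset of $K$.

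Next I would address compactness. Fix $n$. For each $s \in \Gamma$, the restriction $\kappa_s|_{K_n}\colon K_n \to K_n$ is continuous in the point-weak* topology, and $K_n$ is Hausdorff since it is a subspace of $\cM_n(E)$ endowed with the point-weak* topology. Hence the equalizer
\[
F_s = \{x \in K_n : \kappa_s(x) = x\}
\]
is a closed subset of the compact set $K_n$. Then
\[
(K^\Gamma)_n = \bigcap_{s \in \Gamma} F_s
\]
is an intersection of closed subsets of $K_n$, hence closed and therefore compact. Combined with the previous paragraph, this shows that $K^\Gamma$ is a compact nc convex set.

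There is no substantive obstacle; the statement is essentially a bookkeeping verification. The only point worth flagging is that one uses both sides of the structure carried by each $\kappa_s$: its affineness (i.e., the direct-sum and isometry-equivariance axioms) to get closure under nc convex combinations, and its continuity (together with the Hausdorffness of each $K_n$) to get that $(K^\Gamma)_n$ is closed in $K_n$.
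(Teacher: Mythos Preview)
Your proof is correct; the paper states this lemma without proof, treating it as a routine verification, and your argument supplies exactly the expected details. One small slip: in the compression step you write $\beta \in \cM_{m,n}$, but to match the paper's conventions (so that $\beta^* x \beta$ lands in $\cM_m$ when $x \in K_n$) you want $\beta \in \cM_{n,m}$; also, the isometry-equivariance you invoke is property~(3) of the definition of an affine nc \emph{map} in Section~\ref{sec:nc-convexity}, not Definition~\ref{defn:nc_func}.
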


The following characterization of the extreme points in the set of invariant points in an affine nc dynamical system follows from \cite{DK2019}*{Theorem 6.1.6}.

\begin{prop} \label{prop:criterion-ergodic-nc-state}
Let $(K,\Gamma)$ be an affine nc dynamical system. An invariant point $x \in K^\Gamma$ is an extreme point in $K^\Gamma$ if and only if $x$ is both irreducible and maximal in $K^\Gamma$.
\end{prop}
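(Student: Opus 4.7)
The plan is to apply \cite{DK2019}*{Theorem 6.1.6} directly to the compact nc convex set $K^\Gamma$, combined with the observation that the notion of irreducibility is unambiguous: it is the same whether $x$ is viewed as a point of $K$ or as a point of $K^\Gamma$.

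By Lemma \ref{lem:fixed-point-set-nc-convex}, $K^\Gamma$ is itself a compact nc convex set. Hence \cite{DK2019}*{Theorem 6.1.6}, applied intrinsically to $K^\Gamma$, asserts that $x \in (K^\Gamma)_n$ is an extreme point of $K^\Gamma$ if and only if $x$ is maximal in $K^\Gamma$ and the corresponding point evaluation $\delta_x : \rC(K^\Gamma) \to \cM_n$ is irreducible.

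It remains to check that irreducibility of $\delta_x : \rC(K^\Gamma) \to \cM_n$ coincides with $x$ being an irreducible point in the intrinsic sense of Section \ref{sec:spectral-topology}. Suppose $x \simeq y \oplus z$ in $K$, with associated isometries $\alpha_1, \alpha_2$ satisfying $\alpha_1\alpha_1^* + \alpha_2\alpha_2^* = 1_n$, $y = \alpha_1^* x \alpha_1$, and $z = \alpha_2^* x \alpha_2$. For each $s \in \Gamma$ the action $\sigma_s$ is an affine nc homeomorphism and thus intertwines compressions by isometries. Since $sx = x$, we get $sy = \alpha_1^*(sx)\alpha_1 = \alpha_1^* x \alpha_1 = y$, and likewise $sz = z$. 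Hence $y, z \in K^\Gamma$, so the notions of direct-sum decomposability of $x$ in $K$ and in $K^\Gamma$ coincide. Combined with the remark in Section \ref{sec:spectral-topology} that a point is irreducible precisely when its point evaluation is an irreducible representation, this yields the desired equivalence and completes the proof.

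The argument presents no real obstacle; the only delicate point is to verify that invariance of $x$ propagates to its summands, which is forced by the compatibility of the $\Gamma$-action with compressions by isometries.
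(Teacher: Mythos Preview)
Your proof is correct and matches the paper's approach: the paper simply asserts that the proposition ``follows from \cite{DK2019}*{Theorem 6.1.6}'' without further argument, and your proof is a faithful unpacking of that citation applied to the compact nc convex set $K^\Gamma$. Your additional verification that irreducibility of $x$ is the same in $K$ and in $K^\Gamma$ is a useful clarification that the paper leaves implicit.
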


Let $A$ be a C*-algebra and let $\phi : A \to \B(H)$ be a unital completely positive map. A {\em minimal Stinespring representation} of $\phi$ is a triple $(\pi, v, R)$ consisting of a Hilbert space $R$, a representation $\pi : A \to \B(R)$ and an isometry $v : H \to R$ such that $\phi = v^* \pi v$ and $\ol{\pi(A) v H} = R$. 

The next result is probably well known. It is a straightforward generalization of a result of Segal \cite{Seg1951}*{Theorem 5.3} for invariant states of a C*-dynamical system.

\begin{lem} \label{lem:invariant-ucp-covariant-rep}
Let $(A,\Gamma)$ be a C*-dynamical system, let $\mu : A \to \B(H)$ be an invariant unital completely positive map and let $(\pi, v, R)$ be a minimal Stinespring representation for $\mu$. Then there is a unitary representation $\rho : \Gamma \to \U(R)$, such that every vector in $v H$ is invariant for $\rho$ and the the pair $(\pi, \rho)$ is a covariant representation for $(A,\Gamma)$, meaning that
\[
\rho(s) \pi(a) \rho(s)^* = \pi(s a), \qfor s \in \Gamma,\ a \in A.
\]
If $(\pi', v', R')$ is another minimal Stinespring representation for $\mu$ with corresponding unitary representation $\rho' : \Gamma \to \U(R')$, then there is a unitary $u : R \to R'$ such that $u \pi u^* = \pi'$ and $u \rho u^* = \rho'$.
\end{lem}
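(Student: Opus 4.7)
The plan is to imitate the classical construction of a covariant dilation. On the dense subspace $\pi(A) v H \subseteq R$, define $\rho(s)$ by the formula
\[
\rho(s)\bigl(\pi(a) v h\bigr) = \pi(s a) v h, \qfor s \in \Gamma,\ a \in A,\ h \in H.
\]
The first task is to check that this is isometric, which is where the invariance of $\mu$ enters. Concretely, for finitely many $a_i \in A$ and $h_i \in H$,
\[
\Bigl\|\sum_i \pi(s a_i) v h_i\Bigr\|^2 = \sum_{i,j} \bigl\langle v^* \pi((s a_j)^*(s a_i)) v h_i, h_j\bigr\rangle = \sum_{i,j} \bigl\langle \mu(s(a_j^* a_i)) h_i, h_j\bigr\rangle,
\]
and invariance of $\mu$ lets me replace $\mu(s(a_j^* a_i))$ by $\mu(a_j^* a_i)$, recovering $\|\sum_i \pi(a_i) v h_i\|^2$. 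This simultaneously shows both well-definedness and isometry. Applying the same argument to $s^{-1}$ shows the image is dense, so $\rho(s)$ extends uniquely to a unitary on $R$.

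Next I would verify the group, invariance, and covariance properties directly on the dense subspace $\pi(A) v H$. The identity $\rho(st) = \rho(s) \rho(t)$ is immediate from $\pi((st) a) v h = \rho(s)(\pi(t a) v h)$. Taking $a = 1$ gives $\rho(s)(v h) = v h$, so every vector in $vH$ is fixed. For covariance, compute on $\pi(b) v h$:
\[
\rho(s)\pi(a)\rho(s)^*\bigl(\pi(b) v h\bigr) = \rho(s)\pi(a)\bigl(\pi(s^{-1} b) v h\bigr) = \pi\bigl(s(a s^{-1} b)\bigr) v h = \pi(s a)\,\pi(b) v h,
\]
which gives $\rho(s) \pi(a) \rho(s)^* = \pi(s a)$.

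For uniqueness, I would invoke the standard uniqueness of minimal Stinespring representations: given another minimal triple $(\pi', v', R')$, there is a unitary $u : R \to R'$ with $u v = v'$ and $u \pi(a) = \pi'(a) u$ for all $a \in A$. Then $u \rho(s) u^*$ is a unitary on $R'$ that pointwise fixes $v' H$ and intertwines $\pi'$ with $\pi'(s \cdot)$; on the dense subspace $\pi'(A) v' H$ it acts by $\pi'(b) v' h \mapsto \pi'(s b) v' h$, which is exactly the defining property of $\rho'(s)$. Hence $u \rho(s) u^* = \rho'(s)$, as required.

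There is no serious obstacle here; the only subtle point is arranging the isometry computation so that invariance of $\mu$ is used in the correct spot, and then being careful that $\rho$ is defined using the action on $A$ (so that the resulting map is a homomorphism, not an anti-homomorphism).
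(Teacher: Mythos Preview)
Your proof is correct and follows essentially the same approach as the paper: both define $\rho(s)$ on the dense subspace $\pi(A)vH$ by $\rho(s)\pi(a)v\xi = \pi(sa)v\xi$, use invariance of $\mu$ to get well-definedness and isometry, and handle uniqueness via the canonical intertwining unitary between minimal Stinespring dilations. You have simply written out in full the computations that the paper leaves to the reader.
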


\begin{proof}
For $s \in \Gamma$, define $\rho(s)$ on $\pi(A) v H$ by
\[
\rho(s) \pi(a) v \xi = \pi(s a) v \xi, \qfor a \in A,\ \xi \in H. 
\]
The minimality of the Stinespring representation $(\pi, v, H)$ and the invariance of $\mu$ implies that $\rho(s)$ is well defined and extends to a unitary on $R$. It is easy to check that the map $\rho : \Gamma \to \U(H)$ is a unitary representation of $\Gamma$ and the pair $(\pi,\rho)$ is a covariant representation for $(A,\Gamma)$.

Let $p \in \B(R)$ denote the projection onto the subspace of invariant vectors for $\rho$. The fact that $p$ is in the strong operator closed convex hull of the set $\{ \rho(s) : s \in \Gamma \}$ is a consequence of the ergodic theorem.

Let $(\pi', v', H')$ be another minimal Stinespring representation for $\mu$ with corresponding unitary representation $\rho' : \Gamma \to \U(H')$. Define $u : \pi(A) v H \to \pi'(A) v' H$ by
\[
u \pi(a) v \xi = \pi'(a) v' \xi, \qfor a \in A,\ \xi \in H.
\]
Then the minimality of $(\pi,\rho,H)$ and $(\pi',\rho',H')$ implies that $u$ extends to a unitary $u : H \to H'$ satisfying $u \pi_\mu u^* = \pi'$ and $u \rho u^* = \rho'$. 
\end{proof}

The next result is a special case of \cite{LR1967}*{Theorem 2.3}.

\begin{lem} \label{lem:compression-inv-vectors-commutes}
Let $(A,\Gamma)$ be a commutative C*-dynamical system, let $\mu : A \to \B(H)$ be an invariant unital completely positive map and let $(\pi, v, R)$ be a minimal Stinespring representation for $\mu$. Let $\rho : \Gamma \to \U(R)$ denote the unitary representation as in Lemma \ref{lem:invariant-ucp-covariant-rep} and let $p \in \B(R)$ denote the projection onto the subspace of invariant vectors for $\rho$. Then the elements in $p \pi(A) p$ commute.
\end{lem}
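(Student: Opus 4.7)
The plan is to show directly that $p\pi(a)p\pi(b)p = p\pi(b)p\pi(a)p$ for all $a, b \in A$. Since both sides vanish on $(pR)^\perp$, it suffices to check that they agree when applied to an arbitrary vector $\xi \in pR$. I will use three key facts. First, as recalled in the proof of Lemma \ref{lem:invariant-ucp-covariant-rep}, the mean ergodic theorem supplies a net $Q_\alpha = \sum_s c_s^\alpha \rho(s)$ of finite convex combinations with $Q_\alpha \to p$ in the strong operator topology; consequently $Q_\alpha^*$ converges to $p^* = p$ in the weak operator topology. Second, $p$ is the projection onto the $\rho$-invariant subspace, so $\rho(s) p = p \rho(s) = p$ and in particular $\rho(s)\xi = \xi$ for every $\xi \in pR$. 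Third, $\pi(A)$ is commutative and satisfies the covariance relation $\rho(s)\pi(b) = \pi(sb)\rho(s)$.

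Fix $\xi \in pR$. Replacing the middle $p$ in $p\pi(a)p\pi(b)\xi$ by $Q_\alpha$ and passing to the SOT limit, I would compute
\[
p\pi(a) Q_\alpha \pi(b)\xi = \sum_s c_s^\alpha p\pi(a)\rho(s)\pi(b)\xi = \sum_s c_s^\alpha p\pi(a)\pi(sb)\xi,
\]
where in the second equality I have used covariance to rewrite $\rho(s)\pi(b) = \pi(sb)\rho(s)$ and then $\rho(s)\xi = \xi$. Commutativity of $\pi(A)$ now allows me to swap the factors $\pi(a)$ and $\pi(sb)$, and substituting $\pi(sb) = \rho(s)\pi(b)\rho(s)^{-1}$ and using $p\rho(s) = p$ gives
\[
\sum_s c_s^\alpha p\pi(sb)\pi(a)\xi = \sum_s c_s^\alpha p\rho(s)\pi(b)\rho(s)^{-1}\pi(a)\xi = p\pi(b)\,Q_\alpha^*\,\pi(a)\xi.
\]

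Taking limits on both sides finishes the proof. On the left, $Q_\alpha \to p$ in the SOT, so $p\pi(a)Q_\alpha\pi(b)\xi \to p\pi(a)p\pi(b)\xi$ in norm. On the right, $Q_\alpha^* \to p$ in the WOT, and since $\pi(a)\xi$ is a fixed vector and $p\pi(b)$ is a bounded operator, the expression $p\pi(b)Q_\alpha^*\pi(a)\xi$ converges weakly to $p\pi(b)p\pi(a)\xi$. The two limits must therefore coincide, which gives $p\pi(a)p\pi(b)\xi = p\pi(b)p\pi(a)\xi$ and hence the desired commutativity.

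The core of the argument is the middle line, where commutativity of $A$ combined with the covariance relation simultaneously exchanges the roles of $a$ and $b$ and replaces $\rho(s)$ by $\rho(s)^{-1}$; this is what forces the adjoint $Q_\alpha^*$ to appear and is the only nontrivial step. The only technical point to watch is the topology in which one takes limits: SOT convergence on the left and WOT convergence of the adjointed net on the right, both of which pass through the fixed bounded factors without difficulty.
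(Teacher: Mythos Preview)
Your argument is correct. The paper does not actually prove this lemma; it simply records it as a special case of \cite{LR1967}*{Theorem 2.3}. Your direct proof via the Alaoglu--Birkhoff mean ergodic theorem is essentially the standard one and all steps are sound: the identities $p\rho(s)=\rho(s)p=p$, the covariance manipulation, and the passage to the limit (norm on the left, weak on the right, with uniqueness of weak limits) are all justified. One cosmetic remark: you could avoid the WOT/adjoint discussion entirely by noting that since $p$ lies in the SOT-closed convex hull of $\rho(\Gamma)$ and this set is closed under inversion, there is also a net of convex combinations of the $\rho(s^{-1})$ converging SOT to $p$; but what you wrote is already enough.
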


\begin{lem} \label{lem:ma-rep-map-equivariant}
Let $(K,\Gamma)$ be an affine nc dynamical system such that $K$ is an nc Choquet simplex. For $x \in K_m$, let $\mu_x : \rC(K) \to \cM_m$ denote the unique nc Choquet maximal representing map for $x$. Then for $s \in \Gamma$, $s \mu_x = \mu_{s x}$. In particular, if $x$ is invariant then $\mu_x$ is invariant.
\end{lem}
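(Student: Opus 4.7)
The plan is to exploit the uniqueness of the nc Choquet maximal representing map guaranteed by $K$ being an nc simplex. By the universal property of $\rC(K)$, the action of $\Gamma$ on $\rA(K)$ extends uniquely to an action on $\rC(K)$, and this in turn induces an action on nc states on $\rC(K)$ given by $(s\mu)(f) := \mu(s^{-1}f)$. A one-line calculation using the identification $\mu|_{\rA(K)}(a) = a(x)$ and the formula $(s^{-1}a)(x) = a(sx)$ shows that if $\mu$ represents $x \in K_m$ then $s\mu$ represents $sx \in K_m$. In particular, $s\mu_x$ is a representing map for $sx$, so by the uniqueness of the nc Choquet maximal representing map at $sx$ it suffices to show that $s\mu_x$ is itself maximal in the nc Choquet order.

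To verify maximality I would use the description provided by Corollary \ref{cor:formula-affinization-map} together with the equivariance of $\phi$. Let $\phi : \rC(K) \to \rA(K)^{**}$ be the unique UCP map from Theorem \ref{thm:characterization-nc-simplex-expectation}, and extend the $\Gamma$-action to $\rA(K)^{**}$ by taking biduals of the given action on $\rA(K)$. For any $s \in \Gamma$, the composite $\sigma_s \circ \phi \circ \sigma_{s^{-1}} : \rC(K) \to \rA(K)^{**}$ is UCP and restricts to the identity on $\rA(K)$ (because the $\Gamma$-action preserves $\rA(K)$), so the uniqueness clause of Theorem \ref{thm:characterization-nc-simplex-expectation} forces it to equal $\phi$, i.e. $\phi$ is $\Gamma$-equivariant. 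Combined with $\mu_x = x^{**} \circ \phi$ and the identity $(sx)^{**} = x^{**} \circ \sigma_{s^{-1}}$ on $\rA(K)^{**}$, this yields
\[
s\mu_x = x^{**} \circ \phi \circ \sigma_{s^{-1}} = x^{**} \circ \sigma_{s^{-1}} \circ \phi = (sx)^{**} \circ \phi = \mu_{sx},
\]
which simultaneously identifies $s\mu_x$ as nc Choquet maximal and confirms it equals $\mu_{sx}$. The invariant case is the tautological specialization $sx = x$.

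As a sanity check, I would also note the more hands-on alternative argument via the dilation order: if $(y,\alpha)$ is a minimal representation of $\mu_x$ with $y$ maximal, then $\delta_{sy} = \delta_y \circ \sigma_{s^{-1}}$ has the same range as $\delta_y$, so $(sy,\alpha)$ is a minimal representation of $s\mu_x$; and since $\sigma_s : K \to K$ is an affine nc homeomorphism any nontrivial dilation of $sy$ pulls back to one of $y$, so $sy$ is maximal, and Theorem 8.3.7 of \cite{DK2019} applies. The only real pitfall in either approach is clerical, namely keeping straight which side of the action carries an inverse in each of $\rA(K)$, $\rC(K)$, $\rA(K)^{**}$, and the space of nc states; the $\phi$-based argument is preferable precisely because it sidesteps minimal Stinespring bookkeeping and reduces the lemma to the uniqueness statement already packaged into Theorem \ref{thm:characterization-nc-simplex-expectation}.
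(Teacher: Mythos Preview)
Your proposal is correct. Your primary argument---proving that the affinization map $\phi$ of Theorem~\ref{thm:characterization-nc-simplex-expectation} is $\Gamma$-equivariant via its uniqueness clause, and then reading off $s\mu_x=\mu_{sx}$ from the formula $\mu_x=x^{**}\circ\phi$---is genuinely different from the paper's proof, which is precisely your ``hands-on alternative'': the paper picks a minimal representation $(y,\alpha)$ of $\mu_x$, argues directly that $sy$ is maximal because affine nc homeomorphisms carry nontrivial dilations to nontrivial dilations, observes that $(sy,\alpha)$ represents $s\mu_x$, and invokes \cite{DK2019}*{Theorem 8.3.7} plus uniqueness. Your $\phi$-based route is cleaner and more conceptual, and the equivariance of $\phi$ is a fact worth isolating in its own right; the paper's route is more self-contained in that it does not pass through the bidual or Corollary~\ref{cor:formula-affinization-map}, staying entirely at the level of representations and the dilation order. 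Both arguments ultimately rest on the same uniqueness principle, just packaged differently.
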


\begin{proof}
Let $(y,\alpha) \in K_n \times \cM_{n,m}$ be a minimal representation for $\mu_x$. Since $\mu_x$ is maximal, it follows from \cite{DK2019}*{} that $y$ is a maximal point. For $s \in \Gamma$, if $z \in K_p$ is a dilation of $s y$, then $s^{-1} z$ is a dilation of $y$, and it is clear that $z$ is a non-trivial dilation of $s y$ if and only if $s^{-1} z$ is a non-trivial dilation of $y$. Since $y$ is a maximal point, it follows that $s y$ is also a maximal point.

Since $(y,\alpha)$ is a minimal representation of $\mu_x$, the pair $(s y, \alpha)$ is a minimal representation of the map $s \mu_x$. Since $s y$ is a maximal point, \cite{DK2019}*{} implies that $s \mu_x$ is maximal. Furthermore, for $a \in \rA(K)$, 
\[
s \mu_x(a) = \mu_x(s^{-1} a) = (s^{-1} a)(x) = a(s x).
\]
Hence $s \mu_x$ has barycenter $s x$. it now follows from the uniqueness of $\mu_{s x}$ that $\mu_{s x} = s \mu_x$.
\end{proof}

Let $(K,\Gamma)$ be an affine nc dynamical system such that $K$ is an nc Choquet simplex and $x \in K^\Gamma$ be an invariant point. The next result describes the nc Choquet maximal representing maps for $x$ on $\rC(K^\Gamma)$. It is the key technical result in this section.

\begin{prop} \label{prop:max-repn-for-invariant-pt}
Let $(K,\Gamma,h)$ be an affine nc dynamical system, such that $K$ is an nc simplex. For an invariant point $x \in (K^\Gamma)_k$, let $\mu_x : \rC(K) \to \cM_k$ denote the unique nc Choquet maximal representing map for $x$ on $\rC(K)$. Let $(y,\alpha) \in K_n \times K_{n,k}$ be a minimal representation for $\mu_x$ and let $\rho : \Gamma \to \U_n$ denote the unitary representation corresponding to $\delta_y$ as in Lemma \ref{lem:invariant-ucp-covariant-rep}. Let $\xi \in \cM_{n,m}$ be an isometry with range equal to the subspace of invariant vectors for $\rho$. Let $\nu_x : \rC(K^\Gamma) \to \cM_k$ be an nc Choquet maximal representing map for $x$ on $\rC(K^\Gamma)$. Then $\nu_x = (\xi^* \alpha)^* \delta_{\xi^* y \xi} (\xi^* \alpha)$, meaning that $(\xi^* y \xi, \xi^* \alpha)$ is a representation for $\nu_x$.
\end{prop}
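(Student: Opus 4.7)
The approach is to verify that $\lambda := (\xi^*\alpha)^* \delta^{K^\Gamma}_{\xi^* y \xi} (\xi^*\alpha)$ is a well-defined representing map for $x$ on $\rC(K^\Gamma)$ and then to identify it with $\nu_x$ via an equivariant Stinespring argument.

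First I would handle the routine setup. Since the range of $\alpha$ consists of $\rho$-invariant vectors by Lemma~\ref{lem:invariant-ucp-covariant-rep}, and $\xi H_m$ is the invariant subspace of $\rho$, we have $\alpha H_k \subseteq \xi H_m$, so $\xi\xi^*\alpha = \alpha$. Thus $\xi^*\alpha$ is an isometry. Using the covariance identity $\rho(s)\delta_y(a)\rho(s)^* = \delta_y(s a)$ together with $\rho(s)\xi = \xi$, a short calculation yields $s(\xi^* y \xi) = \xi^* y \xi$ for each $s \in \Gamma$, so $\xi^* y \xi \in (K^\Gamma)_m$. The barycenter of $\lambda$ on $\rA(K^\Gamma)$ then equals $(\xi^*\alpha)^*(\xi^* y \xi)(\xi^*\alpha) = \alpha^* y \alpha = x$, using $\xi\xi^*\alpha = \alpha$ in the first equality and $\mu_x|_{\rA(K)} = x$ in the second.

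For the main step, let $r \colon \rC(K) \to \rC(K^\Gamma)$ be the surjective $*$-homomorphism given by restriction to the closed subset $K^\Gamma$. Let $(\tilde z, \tilde\gamma) \in (K^\Gamma)_p \times \cM_{p,k}$ be a minimal Stinespring representation of $\nu_x$, so that $\tilde z$ is maximal in $K^\Gamma$ by \cite{DK2019}*{Theorem 8.3.7}. Surjectivity of $r$ implies that $(\tilde z, \tilde\gamma)$ remains a minimal Stinespring for the pull-back $\nu_x \circ r = \tilde\gamma^*\delta_{\tilde z}\tilde\gamma$, which is a $\Gamma$-invariant representing map for $x$ on $\rC(K)$. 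Since $\mu_x$ is the unique nc Choquet maximal representing map for $x$ on $\rC(K)$, we have $\nu_x \circ r \prec_c \mu_x$, and by the equivalence of the nc Choquet and dilation orders there exist a Stinespring representation $(v, \zeta)$ of $\mu_x$ and an isometry $\sigma$ relating $(\tilde z, \tilde\gamma)$ to $(v, \zeta)$. Stinespring uniqueness for $\mu_x$ allows us to take $v = y \oplus v'$ and $\zeta = \alpha \oplus 0$ for some $v' \in K$.

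The key structural fact is that the unitary representation on $H_p$ produced by Lemma~\ref{lem:invariant-ucp-covariant-rep} for $\nu_x \circ r$ must be trivial: because $\tilde z \in K^\Gamma$, the $*$-representation $\delta_{\tilde z}$ is $\Gamma$-invariant, so the projection onto invariant vectors commutes with $\delta_{\tilde z}(\rC(K))$; since it is the identity on $\tilde\gamma H_k$, minimality of $(\tilde z, \tilde\gamma)$ forces it to be the identity on all of $H_p$. Arranging $\sigma$ to be equivariant with respect to this trivial covariance and the canonical covariance $\rho \oplus \rho'$ on $H_n \oplus H_{v'}$ for the non-minimal Stinespring of $\mu_x$, we conclude that $\sigma(H_p)$ lies in the invariant subspace of $\rho \oplus \rho'$. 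Since $\sigma\tilde\gamma = \alpha \oplus 0 \in H_n \oplus 0$ and minimality of $(\tilde z, \tilde\gamma)$ constrains $\sigma(H_p)$ to the $H_n$-summand, we obtain $\sigma(H_p) \subseteq \xi H_m$, and maximality of $\tilde z$ in $K^\Gamma$ then forces $\sigma(H_p) = \xi H_m$. This identifies $\tilde z$ with $\xi^* y \xi$ and $\tilde\gamma$ with $\xi^*\alpha$ up to unitary equivalence, giving $\nu_x = \lambda$. The main obstacle will be making this equivariant dilation matching rigorous: extracting an isometry $\sigma$ compatible simultaneously with the dilation-order relation $\nu_x \circ r \prec_d \mu_x$ and with the covariances on the two Stinespring spaces, and then confirming that its image is precisely $\xi H_m$ rather than some proper subspace or a larger subspace extending into the $H_{v'}$ part.
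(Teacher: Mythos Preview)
Your setup paragraph is fine: $\xi^*\alpha$ is an isometry, $\xi^* y \xi \in K^\Gamma$, and $\lambda$ has barycenter $x$.

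The main step, however, has a genuine gap at exactly the point you flag. The dilation-order relation $\nu_x \circ r \prec_d \mu_x$ produces an isometry $\sigma$ with $\sigma\tilde\gamma = \zeta$ and $\tilde z = \sigma^*(y \oplus v')\sigma$ \emph{as points of $K$}, i.e.\ only at the level of $\rA(K)$. It does \emph{not} give an intertwining of the $*$-representations $\delta_{\tilde z}^K$ and $\delta_{y\oplus v'}$, so $\sigma H_p$ need not be reducing for $\delta_{y\oplus v'}$. Without that, there is no canonical covariance $\rho'$ on the summand $H_{v'}$ (the $v'$ produced by the dilation order is just some point of $K$ with no invariance), so the phrase ``arranging $\sigma$ to be equivariant with respect to $\rho \oplus \rho'$'' has no content. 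Likewise, the claim that minimality of $(\tilde z,\tilde\gamma)$ forces $\sigma(H_p) \subseteq H_n$ fails: minimality tells you that $\delta_{\tilde z}^K(\rC(K))\tilde\gamma H_k$ is dense in $H_p$, but since $\sigma$ does not intertwine the representations you cannot push this through to the other side.

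The paper circumvents this by never invoking the dilation order between $\nu_x \circ r$ and $\mu_x$. Instead, starting from a minimal representation $(z,\beta)$ of $\nu_x$ on $\rC(K^\Gamma)$, it applies the nc simplex hypothesis a \emph{second} time, to the point $z \in K^\Gamma \subseteq K$, obtaining the unique maximal representing map $\mu_z$ on $\rC(K)$ with minimal representation $(w,\gamma)$. This minimal Stinespring carries a canonical covariance $\sigma$ via Lemma~\ref{lem:invariant-ucp-covariant-rep}. Then $(w,\gamma\beta)$ is a (non-minimal) representation of $\mu_x$, and the \emph{equivariant} Stinespring uniqueness in Lemma~\ref{lem:invariant-ucp-covariant-rep} yields an isometry $\zeta : H_n \to H_q$ that genuinely intertwines both $\delta_y,\delta_w$ and $\rho,\sigma$. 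The identity $\eta\eta^*\zeta = \zeta\xi\xi^*$ between the invariant-vector projections then follows immediately, and the computation of $\nu_x$ is a one-line substitution. The missing idea in your approach is this second use of the simplex hypothesis at the point $z$, which is what buys you an honest intertwining isometry with built-in equivariance rather than a bare dilation isometry.
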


\begin{rem}
Note that while Proposition \ref{prop:max-repn-for-invariant-pt} asserts that the pair $(\xi^* y \xi, \xi^* \alpha)$ is a representation for $\lambda$, it does not assert that this representation is minimal. 
\end{rem}

\begin{proof}
Note that Lemma \ref{lem:ma-rep-map-equivariant} implies $\mu_x$ is invariant, so the unitary representation $\rho$ from Lemma \ref{lem:invariant-ucp-covariant-rep} is well defined.

Let $(z,\beta) \in K^\Gamma_l \times \cM_{l,k}$ be a minimal representation for $\nu_x$. Let $\mu_z : \rC(K) \to \cM_l$ denote the unique nc Choquet maximal representing map for $z$ on $\rC(K)$. Since $z$ is invariant, Lemma \ref{lem:ma-rep-map-equivariant} implies that $\mu_z$ is invariant.

Let $(w, \gamma) \in K_q \times \cM_{q,l}$ be a minimal representation for $\mu_z$ and let $\sigma : \Gamma \to \cM_q$ denote the unitary representation corresponding to $\delta_w$ as in Lemma \ref{lem:invariant-ucp-covariant-rep}. Let $\eta \in \cM_{q,p}$ be an isometry with range equal to the subspace of invariant vectors for $\sigma$.

The point $\eta^* w \eta \in K_p$ is invariant, i.e.  $\eta^* w \eta \in K^\Gamma_p$. Furthermore, since $\ran(\eta) \supseteq \ran(\gamma)$,
\[
\gamma^* \eta \eta^* w \eta \eta^* \gamma = \gamma^* w \gamma = z.
\]
In other words, the point $\eta^* w \eta$ is a dilation of $z$, so by \cite{DK2019}*{Theorem 8.5.1}, the map $\nu_z' : \rC(K^\Gamma) \to \cM_l$ defined by
\[
\nu_z' = \beta^* \gamma^* \eta \delta_{\eta^* w \eta }\eta^* \gamma \beta
\]
satisfies $\nu_z \prec_c \nu_z'$. Since $\nu_z$ is maximal in the nc Choquet order, it follows that $\nu_z' = \nu_z$.

From above, $\beta^* \gamma^* w \gamma \beta = x$. Hence the map $\mu_x' : \rC(K) \to \cM_k$ defined by $\mu_x' =  (\gamma \beta)^* \delta_w (\gamma \beta)$ has barycenter $x$ and $(w, \gamma \beta)$ is a representation of $\mu_x'$. Since $w$ is a maximal point, \cite{DK2019}*{Theorem 8.3.7} implies that $\mu_x'$ is maximal in the nc Choquet order. Hence by uniqueness, $\mu'_x = \mu_x$. It follows that there is an isometry $\zeta \in \cM_{q,n}$, such that $\zeta^* \delta_w \zeta = \delta_y$, $\zeta^* \sigma \zeta = \rho$ and $\zeta \alpha = \gamma \beta$. Note that $\eta \eta^* \zeta = \zeta \xi \xi^*$. Hence
\[
\nu_x = \beta^* \gamma^* \eta \delta_{\eta^* w \eta} \eta^* \gamma \beta = \alpha^* \xi \delta_{\xi^* y \xi} \xi^* \alpha. 
\]
\end{proof}

The next result gives a more useful description of the extreme points in the set of invariant points in an affine nc dynamical system.

\begin{prop} \label{prop:criterion-extreme-invariant}
Let $(K,\Gamma)$ be an affine nc dynamical system such that $K$ is an nc simplex. Let $x \in K^\Gamma_k$ be invariant and let $\mu_x : \rC(K) \to \cM_k$ denote the unique nc Choquet maximal representing map for $x$. Let $(y,\alpha) \in K_n \times \cM_{n,k}$ be a minimal representation for $\mu_x$ on $\rC(K)$ and let $\sigma : \Gamma \to \cM_n$ denote the unitary representation corresponding to $\delta_y$ as in Lemma \ref{lem:invariant-ucp-covariant-rep}. Let $\xi \in \cM_{m,k}$ be an isometry with range equal to the subspace of invariant vectors for $\sigma$. Then $x$ is maximal  if and only if $\ran(\alpha) = \ran(\xi)$. Thus, $x \in \partial(K^\Gamma)$ if and only if $x$ is irreducible and $\ran(\alpha) = \ran(\xi)$.
\end{prop}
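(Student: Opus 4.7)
The plan is to reduce the maximality of $x$ in $K^\Gamma$ to the equation $\nu_x=\delta_x$, and then to translate this into the condition that $\xi^*\alpha$ is a unitary, which (using that $\alpha$ and $\xi$ are isometries and that $\ran(\alpha)\subseteq\ran(\xi)$ always holds because the range of $\alpha$ consists of invariant vectors for $\sigma$) is exactly the assertion $\ran(\alpha)=\ran(\xi)$. The first reduction will follow from general nc convex theory: by Proposition \ref{prop:max-repn-for-invariant-pt} the map $\nu_x$ is the unique nc Choquet maximal representing map for $x$ on $\rC(K^\Gamma)$, so $x$ is maximal in $K^\Gamma$ iff $\delta_x$ is itself nc Choquet maximal iff $\nu_x=\delta_x$; the implication from $\delta_x$ maximal to $x$ maximal uses \cite{DK2019}*{Theorem 8.3.7} applied to the obvious minimal Stinespring representation $(x,I_k)$ of the *-homomorphism $\delta_x$.

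For the $(\Leftarrow)$ direction, suppose $\ran(\alpha)=\ran(\xi)$. Then $m=k$ and $u:=\xi^*\alpha\in\cM_k$ is unitary. Since $\alpha\in\ran(\xi)$ we have $\xi\xi^*\alpha=\alpha$ and hence $\xi u=\alpha$, and a direct computation gives $u^*(\xi^*y\xi)u=(\xi u)^*y(\xi u)=\alpha^*y\alpha=x$. By unitary equivariance of continuous nc functions on $K^\Gamma$, this implies $u^*\delta_{\xi^*y\xi}u=\delta_x$, so that the formula from Proposition \ref{prop:max-repn-for-invariant-pt} collapses to $\nu_x=u^*\delta_{\xi^*y\xi}u=\delta_x$, giving $x$ maximal.

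For the $(\Rightarrow)$ direction, suppose $x$ is maximal in $K^\Gamma$, so $\nu_x=\delta_x$. The pair $(\xi^*y\xi,\xi^*\alpha)$ is then a (a priori non-minimal) Stinespring representation of the *-homomorphism $\delta_x$, and by Stinespring uniqueness the isometry $\xi^*\alpha$ intertwines $\delta_x$ into $\delta_{\xi^*y\xi}$, exhibiting $\delta_x$ as a subrepresentation of $\delta_{\xi^*y\xi}$. My plan is to prove that the representation $(\xi^*y\xi,\xi^*\alpha)$ is itself minimal; by uniqueness of the minimal Stinespring representation $(x,I_k)$ of $\delta_x$, minimality will force $\xi^*\alpha$ to be unitary and hence $\ran(\alpha)=\ran(\xi)$. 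Identifying $\rC(K^\Gamma)$ with the invariant subalgebra $\rC(K)^\Gamma$ of $\rC(K)$ via restriction, the identity $\delta_{\xi^*y\xi}(f)(\xi^*\alpha)=\xi^*\delta_y(f)\alpha$ for $f\in\rC(K)^\Gamma$ reduces minimality to the cyclicity claim
\[
\overline{\delta_y(\rC(K)^\Gamma)\alpha H_k}=\ran(\xi).
\]

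This cyclicity claim will be the main obstacle. One inclusion is immediate, since $\alpha H_k\subseteq\ran(\xi)$ and $\ran(\xi)$ is preserved by $\delta_y(\rC(K)^\Gamma)$ because invariant $f$ make $\delta_y(f)$ commute with $\sigma$. For the reverse inclusion I plan to combine the minimality of $(y,\alpha)$ as a Stinespring representation of $\mu_x$ (which makes $\delta_y(\rC(K))\alpha H_k$ dense in $H_n$) with the covariance identity $\sigma(s)\delta_y(f)=\delta_y(sf)\sigma(s)$: given an invariant $v\in\ran(\xi)$ with an expansion $v=\lim_i\delta_y(f_i)\alpha\zeta_i$, the $\sigma$-invariance of $v$ and of each $\alpha\zeta_i$ forces $\delta_y(sf_i)\alpha\zeta_i$ to converge to $v$ for every $s\in\Gamma$, and an averaging or weak-limit argument inside the von Neumann algebra generated by $\delta_y(\rC(K))$ should then produce invariant approximants lying in $\delta_y(\rC(K)^\Gamma)\alpha H_k$. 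Finally, the characterization of $\partial(K^\Gamma)$ is immediate from the first assertion combined with Proposition \ref{prop:criterion-ergodic-nc-state}, which identifies the extreme points of $K^\Gamma$ with the invariant points that are both irreducible and maximal in $K^\Gamma$.
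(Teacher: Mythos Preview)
Your $(\Leftarrow)$ direction is correct and is essentially the paper's argument. The final sentence combining with Proposition \ref{prop:criterion-ergodic-nc-state} is also fine.

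The $(\Rightarrow)$ direction, however, has a real gap. Two problems:

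First, the identification of $\rC(K^\Gamma)$ with $\rC(K)^\Gamma$ via restriction is not justified. Restriction gives a $*$-homomorphism $\rC(K)^\Gamma \to \rC(K^\Gamma)$, but there is no reason for it to be surjective: even if every $b \in \rA(K^\Gamma)$ extends to some $a \in \rA(K)$, that extension need not be $\Gamma$-invariant, and ``averaging'' the extension back into $\rA(K)^\Gamma$ is exactly the step that fails for non-amenable $\Gamma$. So your claimed identity $\delta_{\xi^* y \xi}(f)(\xi^*\alpha) = \xi^*\delta_y(f)\alpha$ does not let you transport minimality on $\rC(K^\Gamma)$ to a cyclicity statement on $\rC(K)^\Gamma$.

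Second, and more seriously, the cyclicity claim $\overline{\delta_y(\rC(K)^\Gamma)\alpha H_k} = \ran(\xi)$ cannot be obtained by the averaging or weak-limit argument you sketch. Averaging the approximants $\delta_y(f_i)$ over $\Gamma$ (or passing to weak limits in $\delta_y(\rC(K))''$) lands you in the von Neumann completion, not back in $\delta_y(\rC(K)^\Gamma)$; you get operators of the form $p\,\delta_y(f_i)\,p$ with $p=\xi\xi^*$, which have no reason to be point evaluations of invariant continuous nc functions. In effect the cyclicity claim is essentially equivalent to what you are trying to prove, and the proposed mechanism for producing invariant approximants is illusory.

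The paper's proof of $(\Rightarrow)$ avoids both issues by exploiting the nc simplex hypothesis directly. Since $x$ is maximal in $K^\Gamma$ and $\xi^* y \xi \in K^\Gamma$ dilates $x$, the dilation is trivial, so $\xi^* y \xi \simeq x \oplus x'$ relative to $\ran(\alpha)$; this forces $\rA(K)^{**}(y)\alpha H_k \perp \ran(\xi)\ominus\ran(\alpha)$. Now the simplex structure enters: since $y$ is maximal in $K$, Corollary \ref{cor:formula-affinization-map} gives $\delta_y = y^{**}\circ\phi$, whence $\rC(K)(y)\alpha H_k \subseteq \rA(K)^{**}(y)\alpha H_k$. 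Minimality of $(y,\alpha)$ makes the left side dense in $H_n$, so $\ran(\xi)\ominus\ran(\alpha)=0$. The key move is replacing $\rC(K)$ by $\rA(K)^{**}$ via $\phi$, which sidesteps any need to produce $\Gamma$-invariant continuous functions.
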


\begin{proof}
Suppose $x \in \partial(K^\Gamma)$. Then by \cite{DK2019}*{Theorem 6.1.6}, $x$ is irreducible and maximal in $K^\Gamma$. Hence if we know that $x$ is maximal if and only if $\ran(\alpha) = \ran(\xi)$ we are done with the second claim.

Assume $x$ is maximal. Since $\ran(\alpha) \subseteq \ran(\xi)$, the point $\xi^* y \xi \in K^\Gamma_m$ dilates $x$. By the maximality of $x$, this dilation must be trivial, meaning that $\xi^* y \xi \simeq x \oplus x'$ for some $x' \in K^\Gamma$, where the decomposition is with respect to $\ran(\alpha)$.  In particular, this implies that
\[
\rA(K)^{**}(y) \alpha H_k \perp \ran(\xi) \ominus \ran(\alpha).
\]

Since $\mu_x$ is nc Choquet maximal, $y$ is a maximal point in $K$. By \cite{DK2019}*{Theorem 6.1.6}, $\delta_y$ is nc Choquet maximal and hence by Corollary \ref{cor:formula-affinization-map}, $\delta_y = y^{**} \circ \phi$, where $\phi : \rC(K) \to \rA(K)^{**}$ denotes the map from Theorem \ref{thm:characterization-nc-simplex-expectation}. 

Since $(y,\alpha)$ is a minimal representation, the set $\rC(K)(y) \alpha H_k$ is dense in $H_n$. But since $\delta_y = y^{**} \circ \phi$,
\[
\rC(K)(y) \alpha H_k \subseteq \rA(K)^{**}(y) \alpha H_k,
\]
and from above the latter set is orthogonal to $\ran(\xi) \ominus \ran(\alpha)$. It follows that $\ran(\xi) = \ran(\alpha)$.

Conversely, suppose that $\ran(\alpha) = \ran(\xi)$. Then we can take $\xi = \alpha$. Let $\nu_x : \rC(K^\Gamma) \to \cM_m$ be an nc Choquet maximal representing map for $x$ on $\rC(K^\Gamma)$. By Proposition \ref{prop:max-repn-for-invariant-pt}, $\nu_x = (\alpha^* \xi) \delta_{\xi^* y \xi} (\xi^* \alpha) = \delta_x$. Hence $x$ is maximal.
\end{proof}

\begin{thm} \label{thm:nc-simplex-of-invariant-points}
Let $(K,\Gamma,h)$ be an affine nc dynamical system such that $K$ is an nc simplex. Then the set
\[
K^\Gamma = \{ x \in K : s x = x \text{ for all } s \in \Gamma \}
\]
of invariant points is an nc simplex.
\end{thm}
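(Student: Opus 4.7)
The plan is to reduce the uniqueness of maximal representing maps for invariant points down to the uniqueness already guaranteed by the hypothesis that $K$ itself is an nc simplex, using Proposition \ref{prop:max-repn-for-invariant-pt} as the bridge. First, by Lemma \ref{lem:fixed-point-set-nc-convex}, $K^\Gamma$ is a compact nc convex set, so it makes sense to ask whether it is an nc simplex. By \cite{DK2019}*{Corollary 8.3.8}, every point of $K^\Gamma$ admits at least one representing map on $\rC(K^\Gamma)$ that is maximal in the nc Choquet order, so only uniqueness needs to be proved.

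Fix $x \in (K^\Gamma)_k$. Since $K$ is an nc simplex, there is a unique nc Choquet maximal representing map $\mu_x \colon \rC(K) \to \cM_k$ for $x$ on $\rC(K)$. Let $(y,\alpha) \in K_n \times \cM_{n,k}$ be a minimal Stinespring representation of $\mu_x$; by the standard uniqueness of minimal Stinespring representations, this pair is unique up to a unitary $u \in \cM_n$ with $u \alpha = \alpha$ and $u \delta_y u^* = \delta_y$. Lemma \ref{lem:ma-rep-map-equivariant} tells us that $\mu_x$ is $\Gamma$-invariant, so Lemma \ref{lem:invariant-ucp-covariant-rep} produces a unitary representation $\rho \colon \Gamma \to \rU_n$ which is canonical given $(y,\alpha)$; the same uniqueness statement ensures that under a change of minimal representation by $u$, the associated $\rho$ is conjugated by $u$ as well. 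Let $\xi \in \cM_{n,m}$ be an isometry onto the subspace of $\rho$-invariant vectors; the range of $\xi$ is uniquely determined by $\rho$.

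Now let $\nu_1, \nu_2 \colon \rC(K^\Gamma) \to \cM_k$ be any two nc Choquet maximal representing maps for $x$. Proposition \ref{prop:max-repn-for-invariant-pt} applies to each $\nu_i$ and yields
\[
\nu_i = (\xi^* \alpha)^* \, \delta_{\xi^* y \xi} \, (\xi^* \alpha),
\]
where the pair $(y,\alpha)$, the representation $\rho$, and the isometry $\xi$ are built from $\mu_x$ as described. By the uniqueness discussion above, replacing $(y,\alpha,\rho)$ by a unitarily equivalent choice $(u^* y u, u^* \alpha, u^* \rho u)$ replaces $\xi$ by $u^* \xi v$ for a unitary $v \in \cM_m$, and a direct computation shows that the right-hand side of the displayed formula is unchanged. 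Hence the right-hand side depends only on $x$, and consequently $\nu_1 = \nu_2$. This gives uniqueness, so $K^\Gamma$ is an nc simplex.

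The only step that requires any genuine work is the verification that the formula in Proposition \ref{prop:max-repn-for-invariant-pt} is independent of the unitary ambiguity in the minimal representation and the associated $\rho$; everything else is unpacking of already-established results. In fact, once one recognizes that Proposition \ref{prop:max-repn-for-invariant-pt} provides a closed-form description of every maximal representing map for an invariant point purely in terms of the (unique) data $\mu_x$ on $\rC(K)$, the theorem is essentially immediate, and I expect the published proof to be short.
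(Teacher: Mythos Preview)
Your proposal is correct and follows the same route as the paper: invoke Lemma~\ref{lem:fixed-point-set-nc-convex} for the compact nc convex structure, and then read uniqueness off from Proposition~\ref{prop:max-repn-for-invariant-pt}. The paper's proof is exactly this, in two lines.

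One remark: your discussion of unitary ambiguity, while correct, is unnecessary. In the statement of Proposition~\ref{prop:max-repn-for-invariant-pt}, the data $(y,\alpha,\rho,\xi)$ are fixed \emph{before} the arbitrary maximal map $\nu_x$ is introduced. Thus for a \emph{single} fixed choice of $(y,\alpha,\rho,\xi)$, the proposition already asserts that \emph{every} maximal representing map $\nu_x$ equals $(\xi^*\alpha)^*\delta_{\xi^* y\xi}(\xi^*\alpha)$; two such maps therefore coincide immediately, with no need to check that the right-hand side is stable under change of minimal representation.
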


\begin{proof}
Lemma \ref{lem:fixed-point-set-nc-convex} implies that $K^\Gamma$ is a compact nc convex set and Proposition \ref{prop:max-repn-for-invariant-pt} implies that for an invariant point $x \in K^\Gamma$, there is a uniquely determined nc Choquet maximal representing map for $x$ on $\rC(K^\Gamma)$.
\end{proof}

Let $(X,\Gamma)$ be a topological dynamical system such that $X$ is a compact Hausdorff space. We have already mentioned the classical fact that the space $\P(X)^\Gamma$ of invariant probability measures is a simplex. This is equivalent to the assertion that for an affine dynamical system $(C,\Gamma)$ such that $C$ is a Bauer simplex, the compact convex set $C^\Gamma$ of invariant points is a simplex. The next result shows that this remains true if $C$ is replaced by an arbitrary simplex. This fact may already be known, however we have been unable to find it in the literature.

\begin{cor}
Let $(C,\Gamma)$ be an affine dynamical system, such that $C$ is a simplex. Then the set $C^\Gamma$ of invariant points is a simplex.
\end{cor}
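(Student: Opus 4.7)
My plan is to reduce this classical statement to Theorem \ref{thm:nc-simplex-of-invariant-points} via the identification of simplices with a subclass of nc simplices provided by Theorem \ref{thm:nc-criterion-classical-simplex}. First I would form the affine nc dynamical system $(K,\Gamma)$ with $K := \max(C)$. Since $C$ is a classical simplex, Theorem \ref{thm:nc-criterion-classical-simplex} gives that $K$ is an nc simplex with $\partial K = \partial K_1 = \partial C$. The $\Gamma$-action on $C$ induces order automorphisms of the function system $\rA(C)$, which are automatically complete order automorphisms with respect to the minimal operator system structure on $\rA(K) = \rA(C)$, so by the categorical duality this lifts to an action of $\Gamma$ on $K$ by continuous affine nc homeomorphisms. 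Theorem \ref{thm:nc-simplex-of-invariant-points} then tells us that $K^\Gamma$ is an nc simplex.

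To conclude that $K^\Gamma_1 = C^\Gamma$ is a classical simplex, I would invoke Theorem \ref{thm:nc-criterion-classical-simplex} in the forward direction: it suffices to verify that $\partial K^\Gamma = \partial(K^\Gamma)_1$, i.e., that every extreme point of $K^\Gamma$ lies at scalar level. Fix $x \in \partial(K^\Gamma)_k$ and let $\mu_x : \rC(K) \to \cM_k$ be the unique nc Choquet maximal representing map for $x$ on $\rC(K)$, with minimal Stinespring representation $(y,\alpha) \in K_n \times \cM_{n,k}$. By Proposition \ref{prop:criterion-extreme-invariant}, the point $y$ is maximal in $K$ and $p := \alpha\alpha^*$ is the projection onto the subspace of vectors invariant under the associated unitary representation $\rho : \Gamma \to \U(H_n)$.

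Since $y$ is maximal and $K = \max(C)$, the representation $\delta_y$ factors through the commutative C*-algebra $\cmin(\rA(K)) = \rC(\ol{\partial C})$; thus $\mu_x$ descends to a $\Gamma$-invariant u.c.p. map on this commutative C*-algebra whose minimal Stinespring data still has $\rho$ as the associated unitary representation. Lemma \ref{lem:compression-inv-vectors-commutes} therefore yields that $p\delta_y(\rC(K))p$ is commutative. Using $\alpha\alpha^* = p$ and $\alpha^*\alpha = I_k$, the map $T \mapsto \alpha^* T \alpha$ is multiplicative on $p\B(H_n)p$, so for all $a,b \in \rC(K)$,
\[
\mu_x(a)\mu_x(b) = \alpha^*(p\delta_y(a)p)(p\delta_y(b)p)\alpha = \alpha^*(p\delta_y(b)p)(p\delta_y(a)p)\alpha = \mu_x(b)\mu_x(a).
\]
Hence $\mu_x(\rC(K))$ is commutative.

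It remains to transfer commutativity to $\delta_x : \rC(K^\Gamma) \to \cM_k$. The inclusion $K^\Gamma \hookrightarrow K$ is dual to a surjective unital complete order homomorphism $r : \rA(K) \to \rA(K^\Gamma)$ (an nc Hahn-Banach extension), so for any $\tilde a \in \rA(K^\Gamma)$ we can lift to some $a \in \rA(K)$ with $r(a) = \tilde a$, giving $\delta_x(\tilde a) = \tilde a(x) = a(x) = \mu_x(a)$. Thus $\delta_x(\rA(K^\Gamma)) \subseteq \mu_x(\rC(K))$ is commutative, and so the C*-algebra $\delta_x(\rC(K^\Gamma))$ it generates is commutative as well. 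Since $x$ is extreme, $\delta_x$ is irreducible, and a commutative irreducible representation on $H_k$ forces $k = 1$. This establishes $\partial K^\Gamma = \partial(K^\Gamma)_1$, and Theorem \ref{thm:nc-criterion-classical-simplex} then identifies $C^\Gamma$ as a simplex. The main technical hurdle lies in the multiplicativity identity that converts commutativity of the compression $p\delta_y(\rC(K))p$ into commutativity of the image of $\mu_x$; the surjectivity of $r$ used at the end is a standard nc analog of Hahn-Banach extension.
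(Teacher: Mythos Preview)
Your proposal is correct and follows essentially the same route as the paper's own proof: pass to $K=\max(C)$, invoke Theorem~\ref{thm:nc-simplex-of-invariant-points}, and then show $\partial(K^\Gamma)\subseteq (K^\Gamma)_1$ by using Proposition~\ref{prop:criterion-extreme-invariant} to identify $\ran(\alpha)$ with the $\rho$-invariant vectors, factoring $\delta_y$ through the commutative $\rC(\ol{\partial C})$, and applying Lemma~\ref{lem:compression-inv-vectors-commutes} to force $k=1$. Your write-up is in fact a bit more explicit than the paper's (you spell out the multiplicativity of $T\mapsto \alpha^*T\alpha$ on $p\B(H_n)p$ and flag the surjectivity of $r:\rA(K)\to\rA(K^\Gamma)$, which the paper uses tacitly); note that for the final step density of the image of $r$ already suffices, and this follows from injectivity of $K^\Gamma\hookrightarrow K$ by a Hahn--Banach argument on scalar states.
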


\begin{proof}
Let $K = \max(C)$, where $\max(C)$ is defined as in Section \ref{sec:basic-examples}. Since $C$ is a simplex, Theorem \ref{thm:nc-criterion-classical-simplex} implies that $K$ is an nc simplex. Hence by Theorem \ref{thm:nc-simplex-of-invariant-points}, $K^\Gamma$ is an nc simplex. In order to show that $C^\Gamma$ is a simplex, it suffices to show that $\partial (K^\Gamma) \subseteq K^\Gamma_1$. Indeed, since $K^\Gamma_1 = C^\Gamma$, the result will follow from Theorem \ref{thm:nc-criterion-classical-simplex}.

Fix $x \in \partial (K^\Gamma)_k$. Let $\mu_x : \rC(K) \to \cM_k$ denote the unique nc Choquet maximal representing map for $x$ on $\rC(K)$. Let $(y,\alpha) \in K_n \times \cM_{n,k}$ be a minimal representation for $\mu_x$ and let $\rho : \Gamma \to \U_n$ denote the unitary representation corresponding to $\delta_y$ as in Lemma \ref{lem:invariant-ucp-covariant-rep}. Since $x$ is an extreme point in $K^\Gamma$, Proposition \ref{prop:criterion-extreme-invariant} implies that the subspace of invariant vectors for $\rho$ is precisely the range of $\alpha$.

Since $\mu_x$ is nc Choquet maximal on $\rC(K)$, \cite{DK2019}*{Theorem 8.3.7} implies that $y$ is maximal in $K$. Hence by \cite{DK2019}*{Proposition 5.2.4}, $\delta_y$ factors through $\cmin(\rA(K)) = \rC(\ol{\partial C})$. In particular, the range of $\delta_y$ is a commutative C*-algebra. Hence by Lemma \ref{lem:compression-inv-vectors-commutes}, the elements in $\alpha^* \rC(K)(y) \alpha$ commute. Since
\[
\rA(K^\Gamma)(x) = \alpha^* \rA(K)(x) \alpha \subseteq \alpha^* \rC(K)(y) \alpha,
\]
it follows that the elements in $\rA(K^\Gamma)(x)$ commute. Since $x$ is irreducible, we conclude that $x \in K^\Gamma_1$.
\end{proof}

\section{Noncommutative ergodic decomposition theorem} \label{sec:nc-ergodic-theory}

Let $(X,\Gamma)$ be a topological dynamical system where $X$ is a compact Hausdorff space. The classical ergodic decomposition theorem is a consequence of the Choquet-Bishop-de Leeuw integral representation theorem, which asserts that for an invariant probability measure $\mu \in \rP(X)^\Gamma$, there is a unique probability measure $\nu \in \rP(\rP(X)^\Gamma)$ with barycenter $\mu$ that is maximal in the Choquet order. In particular, $\mu$ is supported on the set $\partial(\rP(X)^\Gamma)$ of ergodic probability measures and
\[
\mu = \int_{\rP(X)^\Gamma} \lambda\, d\nu(\lambda).
\]

In this section we will apply the results in Section \ref{sec:nc-dynamics} to C*-dynamical systems. We will introduce a notion of ergodic nc state and and obtain a noncommutative analogue of the classical ergodic decomposition theorem.

\begin{defn}
Let $(A,\Gamma)$ be a C*-dynamical system and let $K$ denote the nc state space of $A$. We will say that an nc state $\mu \in K$ is {\em ergodic} if it is an extreme point of $K^\Gamma$. 
\end{defn}

Since we will be primarily interested in C*-dynamical systems in Section \ref{sec:nc-ergodic-theory} and Section \ref{sec:kazhdan}, we now take the time to discuss how the results in Section \ref{sec:nc-dynamics} specialize to that setting.

Let $A$ be a C*-algebra with nc state space $K$ so that $A$ is unitally completely order isomorphic to $\rA(K)$. Let $\mu : A \to \cM_m$ be an nc state and let $\hat{\mu} : \rC(K) \to \cM_m$ denote the nc state on $\rC(K)$ obtained via composition with the canonical surjective homomorphism from $\rC(K)$ onto $\cmin(\rA(K)) = A$.

Let $(\pi,v,H)$ be a minimal Stinespring representation for $\mu$. We can assume that $H = H_n$ so that $\pi$ corresponds to a point $x \in K_n$ and $v = \alpha$ for an isometry $\alpha \in \cM_{n,m}$. By \cite{DK2019}*{Example 6.1.5}, the point $x$ is maximal, so by \cite{DK2019}*{Proposition 5.2.3}, $\delta_x$ is the unique representing map for $x$.

As above, let $\hat{\pi} : \rC(K) \to \cM_n$ denote the representation of $\rC(K)$ obtained via composition with the canonical surjective homomorphism from $\rC(K)$ onto $A$. Then $\hat{\pi}$ is a representing map for $x$, so from above $\hat{\pi} = \delta_x$. In particular, this implies that the pair $(x,\alpha) \in K_n \times \cM_{n,m}$ is a representation for $\hat{\mu}$.

By the minimality of the Stinespring representation $(\pi,\alpha,H)$, the pair $(x,\alpha)$ is a minimal representation for $\hat{\mu}$. Therefore, by the uniqueness of minimal representations (see \cite{DK2019}*{Remark 5.2.2}), we see that there is a bijective correspondence between minimal Stinespring representations of $\mu$ and minimal representations of $\mu$ on $\rC(K)$.

The following characterization of the ergodic nc states in a C*-dynamical system follows from the above discussion and Proposition \ref{prop:criterion-extreme-invariant}. 

\begin{prop}
Let $(A,\Gamma)$ be a C*-dynamical system. Let $\mu : A \to \cM_n$ be an invariant nc state and let $(\pi, v, H)$ be a minimal Stinespring representation for $\mu$. Let $\sigma : \Gamma \to \U(H)$ denote the unitary representation corresponding to $\pi$ as in Lemma \ref{lem:invariant-ucp-covariant-rep}. Then $\mu$ is ergodic if and only if it is irreducible and $\ran(v)$ is precisely the set of invariant vectors for $\sigma$.
\end{prop}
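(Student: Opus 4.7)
The plan is to obtain this proposition as a direct translation of Proposition \ref{prop:criterion-extreme-invariant} from the $\rC(K)$ side back to the $A$ side, using the dictionary between Stinespring dilations for nc states on $A$ and minimal representations of their lifts on $\rC(K)$ that is set out in the paragraphs immediately preceding the statement.

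First, I would record the structural fact that $K$, being the nc state space of a unital C*-algebra, is an nc simplex by Corollary \ref{cor:nc-state-space-c-star-alg-prelim}. This brings both Theorem \ref{thm:nc-simplex-of-invariant-points} and, crucially, Proposition \ref{prop:criterion-extreme-invariant} into play for the invariant point $\mu \in K_m^\Gamma$. The ergodicity of $\mu$ is, by definition, the same as the assertion $\mu \in \partial(K^\Gamma)$, so it suffices to interpret the geometric criterion of Proposition \ref{prop:criterion-extreme-invariant} in the present Stinespring language.

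Next, I would transport the minimal Stinespring representation $(\pi,v,H)$ over to $\rC(K)$. Writing $H = H_n$, the representation $\pi:A \to \cM_n$ corresponds to a point $y \in K_n$, and $v$ becomes an isometry $\alpha = v \in \cM_{n,m}$. As explained before the statement, $y$ is a maximal point (Example 6.1.5 of [DK2019]), $\delta_y = \hat{\pi}$ on $\rC(K)$, and $(y,\alpha)$ is a minimal representation of $\hat{\mu}$. Because $y$ is maximal and the representation is minimal, \cite{DK2019}*{Theorem 8.3.7} shows $\hat{\mu}$ is nc Choquet maximal; since $K$ is an nc simplex, $\hat{\mu}$ is the unique such map, i.e.\ $\hat{\mu} = \mu_\mu$ in the notation of Section \ref{sec:nc-dynamics}.

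Then I would reconcile the two candidate unitary representations of $\Gamma$: the one produced by Lemma \ref{lem:invariant-ucp-covariant-rep} applied to $\pi:A \to \B(H)$, and the one produced by the same lemma applied to $\delta_y:\rC(K) \to \B(H)$ viewed as an invariant UCP map. These coincide: the defining formula $\rho(s)\pi(a)v\xi = \pi(sa)v\xi$ transports under the canonical quotient $\rC(K) \to A$ to $\rho(s)\delta_y(f)v\xi = \delta_y(sf)v\xi$, and by minimality of the Stinespring triple the two formulas determine the same unitary on $H$. Thus the representation $\sigma$ in the statement agrees with the representation $\sigma$ appearing in Proposition \ref{prop:criterion-extreme-invariant} applied to $(y,\alpha)$.

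With these identifications in place, Proposition \ref{prop:criterion-extreme-invariant} applied to $x = \mu$ with minimal representation $(y,\alpha) = (\pi,v)$ says exactly that $\mu \in \partial(K^\Gamma)$ iff $\mu$ is an irreducible point of $K$ and $\ran(v)$ equals the range of the isometry $\xi$ onto the invariant vectors for $\sigma$ — which is the desired statement. I do not expect a substantive obstacle; the main care is in verifying that the $\sigma$ constructed from the representation $\pi$ of $A$ coincides with the $\sigma$ that appears in Proposition \ref{prop:criterion-extreme-invariant}, so that the two ``invariant subspace'' conditions are literally the same condition.
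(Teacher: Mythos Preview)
Your proposal is correct and follows precisely the paper's own approach: the paper does not give a separate proof but simply states that the result ``follows from the above discussion and Proposition \ref{prop:criterion-extreme-invariant}'', and you have spelled out exactly that translation. The only points worth tightening are cosmetic (you write $\mu \in K_m^\Gamma$ while the statement uses $\cM_n$, and ``irreducible point of $K$'' should match the irreducibility in $K^\Gamma$ used in Proposition \ref{prop:criterion-extreme-invariant}), but the substance is identical.
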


\begin{rem}
Let $(X,\Gamma)$ be a topological dynamical system where $X$ is a compact Hausdorff space and consider the corresponding C*-dynamical system $(\rC(X), \Gamma)$. By the Riesz-Markov-Kakutani representation theorem, we can identify an invariant probability measure $\mu$ on $X$ with an invariant state on $\rC(X)$. 

Let $\pi : \rC(X) \to \B(\rL^2(X,\mu))$ denote the representation defined by $\pi_\mu(f) = M_f$ for $f \in \rC(X)$, where $M_f \in \B(\rL^2(X,\mu))$ denotes the multiplication operator corresponding to $f$. Then $(\pi,1_X,\rL^2(X,\mu))$ is the GNS representation for $\mu$. The corresponding unitary representation $\sigma : \Gamma \to \U(\rL^2(X,\mu))$ is defined by $\sigma(s) h = h \circ s^{-1}$ for $s \in \Gamma$ and $h \in \rL^2(X,\mu)$. 

Proposition \ref{prop:criterion-ergodic-nc-state} asserts that the probability measure $\mu$ is ergodic if and only if the only functions in $\rL^2(X,\mu)$ that are invariant for $\sigma$ are the constant functions. This is equivalent to the classical characterization of ergodic probability measures on $X$. 
\end{rem}

For a compact nc convex set $K$, $\rB^\infty(K)$ denotes the C*-algebra of bounded Baire nc functions on $K$ \cite{DK2019}*{Definition 9.2.1}. This is the Baire-Pedersen envelope of the C*-algebra $\rC(K)$, i.e. the C*-algebra obtained as the monotone sequential closure of $\rC(K)$ in its universal representation. Since $\rC(K) \subseteq \rB^\infty(K) \subseteq \rC(K)^{**}$, elements in $\rB^\infty(K)$ are, in particular, bounded nc functions.

An nc state $\mu : \rC(K) \to \cM_n$ is {\em supported} on the extreme boundary $\partial K$, if $\mu(f) = 0$ for every bounded Baire nc function $f \in \rB^\infty(K)$ satisfying $f(x) = 0$, for every $x \in \partial K$.

The next result is a noncommutative ergodic decomposition theorem with uniqueness. Note that the C*-dynamical system is not required to be separable.

\begin{thm}[Noncommutative ergodic decomposition] \label{thm:nc-ergodic-decomposition} \strut \\
Let $(A,\Gamma)$ be a C*-dynamical system and let $K$ denote the nc state space of $A$. For an invariant nc state $\mu \in K^\Gamma$, there is a unique nc state $\nu\colon\rC(K^\Gamma) \to \cM_n$, that is maximal in the nc Choquet order and represents $\mu$. Moreover, $\nu$ is supported on the space $\partial (K^\Gamma)$ of ergodic nc states.
\end{thm}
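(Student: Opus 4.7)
The plan is to reduce the statement to two ingredients already available in the paper: the fact that $K^\Gamma$ is itself an nc simplex, and the noncommutative Choquet--Bishop--de Leeuw theorem \cite{DK2019}*{Theorem 9.2.3}, which guarantees that any point in a compact nc convex set has a representing map supported on the extreme boundary and that every nc Choquet maximal representing map is so supported.

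First I would invoke Corollary \ref{cor:nc-state-space-c-star-alg-prelim}: since $A$ is a unital C*-algebra, its nc state space $K$ is an nc simplex. The action of $\Gamma$ on $A$ dualises, via the equivariant Kadison duality discussed in Section \ref{sec:nc-dynamics}, to an affine nc dynamical system $(K,\Gamma)$ in the sense of Definition \ref{defn:nc-affine-dynamical-system}. Theorem \ref{thm:nc-simplex-of-invariant-points} then applies and tells us that the compact nc convex set $K^\Gamma$ of invariant nc states is again an nc simplex.

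Next, given $\mu \in (K^\Gamma)_n$, I would apply the definition of nc simplex directly to the nc convex set $K^\Gamma$ (not $K$): there exists a unique nc state $\nu : \rC(K^\Gamma) \to \cM_n$ that is maximal in the nc Choquet order on $\rC(K^\Gamma)$ and whose barycenter is $\mu$. This gives the existence and uniqueness part of the statement. One subtle point I would want to flag explicitly: the maximality is with respect to the nc Choquet order intrinsic to $K^\Gamma$, and the representing map is defined on $\rC(K^\Gamma)$ rather than $\rC(K)$, so I would remark that this is the appropriate analogue of the classical decomposition, where invariant probability measures are represented by measures on the simplex of invariant measures.

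For the support statement, I would appeal to the noncommutative Choquet--Bishop--de Leeuw theorem \cite{DK2019}*{Theorem 9.2.3} applied to $K^\Gamma$: any nc state on $\rC(K^\Gamma)$ that is maximal in the nc Choquet order is supported on the extreme boundary $\partial(K^\Gamma)$ in the Baire-nc sense reviewed just before the theorem statement. Since the extreme points of $K^\Gamma$ are precisely the ergodic nc states by definition, this gives the desired conclusion. No step here is a genuine obstacle once the machinery is in place; the real content of the theorem is concentrated in Theorem \ref{thm:nc-simplex-of-invariant-points}, whose proof has already been carried out via the technical Proposition \ref{prop:max-repn-for-invariant-pt}.
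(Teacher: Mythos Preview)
Your proposal is correct and follows essentially the same approach as the paper: both combine Theorem \ref{thm:nc-simplex-of-invariant-points} (that $K^\Gamma$ is an nc simplex, yielding uniqueness) with the noncommutative Choquet--Bishop--de Leeuw theorem \cite{DK2019}*{Theorem 9.2.3} (yielding existence and boundary support). You are slightly more explicit than the paper in first invoking Corollary \ref{cor:nc-state-space-c-star-alg-prelim} to verify the hypothesis of Theorem \ref{thm:nc-simplex-of-invariant-points}, but the argument is otherwise the same.
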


\begin{proof}
The existence of an nc Choquet maximal representing map $\nu : \rC(K) \to \cM_n$ that is supported on $\partial (K^\Gamma)$ follows from the noncommutative Choquet-Bishop-de Leeuw theorem \cite{DK2019}*{Theorem 9.2.3}. By Theorem \ref{thm:nc-simplex-of-invariant-points}, $K^\Gamma$ is an nc simplex, and hence $\nu$ is unique.
\end{proof}

In the separable case, we obtain an integral decomposition of invariant nc states in terms of ergodic nc states that more closely resembles the classical ergodic decomposition theorem. Before stating the result, we will briefly review the notion of integration against a noncommutative probability measure introduced in \cite{DK2019}*{Section 10}.

Let $K$ be a compact nc convex set such that $\rA(K)$ is separable. For $n \leq \aleph_0$, an {\em $\cM_n$-valued finite nc measure} on $K$ is a sequence $\lambda = (\lambda_m)_{m \leq \aleph_0}$ such that each $\lambda_m$ is a $\ncpmaps(\cM_m,\cM_n)$-valued Borel measure and the sum
\[
\sum_{m \leq \aleph_0} \lambda_m(K_m)(1_m) \in \cM_n
\]
is weak*-convergent. For $E \in \bor(K)$, the value $\lambda(E)$ is defined by
\[
\lambda(E) = \sum_{m \leq \aleph_0} \lambda_m(E_m).
\]
If $\lambda(K) = 1$, then $\lambda$ is said to be an {\em $\cM_n$-valued nc probability measure}.

Since each component $K_m$ of $K$ is metrizable, it follows from \cite{DK2019}*{Lemma 10.3.1} that each component $(\partial K)_m$ of the extreme boundary $\partial K$ is Borel. We say $\lambda$ is supported on $\partial K$ if
\[
\lambda_m(K_m \setminus (\partial K)_m) = 0_{\cM_m,\cM_n}, \qfor m \leq \aleph_0.
\]

In order to make sense of the integral of an nc function in $\rB^\infty(K)$ against $\lambda$, we require $\lambda$ to be {\em admissible}, meaning that each $\lambda_m$ is absolutely continuous with respect to a scalar-valued measure on $K_m$. 

For $f \in \rB^\infty(K)$, the restriction $f|_{K_m} : K_m \to \cM_m$ is a bounded and measurable $\cM_m$-valued function on $K_m$. Since $\cM_m$ is metrizable, the relative weak* topology on a bounded subset of $\cM_m$ containing the range of $f|_{K_m}$ is metrizable. It was shown in \cite{DK2019}*{Section 10.2} that $f|_{K_m}$ can be approximated uniformly with respect to this metric by simple functions, for which integration against $\lambda_m$ is defined in the obvious way. The fact that $\lambda_m$ is absolutely continuous with respect to a scalar-valued measure on $K_m$ ensures that the integral of $f|_{K_m}$ can be defined as the limit of integrals of approximating simple functions. 

For $f \in \rB^\infty(K)$, the integral of $f$ with respect to $\lambda$ is defined by
\[
\int_K f\, d \lambda := \sum_{m \leq \aleph_0} \int_{K_m} f\, d\lambda_m.
\]
Combining \cite{DK2019}*{Theorem 10.3.10} with Theorem \ref{thm:nc-ergodic-decomposition}, we obtain the following integral representation theorem.

\begin{thm}
Let $(A,\Gamma)$ be a separable C*-dynamical system and let $K$ denote the nc state space of $A$. For an invariant nc state $\mu \in K^\Gamma$ there is an admissible nc probability measure $\lambda$ on $K$ that represents $\mu$ and is supported on the set $\partial (K^\Gamma)$ of ergodic nc states on $A$, meaning that
\[
\mu(a) = \int_K a\ d\lambda, \qfor a \in \A.
\]
\end{thm}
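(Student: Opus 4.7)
The strategy is to chain together two results already established: the noncommutative ergodic decomposition theorem (Theorem \ref{thm:nc-ergodic-decomposition}), which produces a unique nc Choquet maximal state on $\rC(K^\Gamma)$, and the noncommutative integral representation theorem from \cite{DK2019}*{Theorem 10.3.11}, which converts such a state into an admissible nc probability measure when the underlying operator system is separable.

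First I would apply Theorem \ref{thm:nc-ergodic-decomposition} to the invariant nc state $\mu \in K^\Gamma_n$. This yields a unique nc state $\nu \colon \rC(K^\Gamma) \to \cM_n$ that is maximal in the nc Choquet order, represents $\mu$, and is supported on $\partial(K^\Gamma)$ in the Baire sense. Since $A$ is separable, $\rA(K) \cong A$ is separable, and hence so is the quotient operator system $\rA(K^\Gamma)$. Thus $K^\Gamma$ fulfills the separability hypothesis needed to invoke the integral representation theorem on $\rC(K^\Gamma)$.

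Next I would apply \cite{DK2019}*{Theorem 10.3.11} to $\nu$. Because $\nu$ is nc Choquet maximal and supported on $\partial(K^\Gamma)$, the theorem yields an admissible nc probability measure $\lambda$ on $K^\Gamma$ supported on $\partial(K^\Gamma)$ such that $\nu(f) = \int_{K^\Gamma} f\, d\lambda$ for all $f \in \rB^\infty(K^\Gamma)$. Viewing $K^\Gamma$ as a closed (hence Borel) nc subset of $K$, I would then extend $\lambda$ by zero to an admissible nc probability measure on $K$, still supported on $\partial(K^\Gamma)$. For $a \in A \cong \rA(K)$, the restriction $a|_{K^\Gamma}$ lies in $\rA(K^\Gamma)$, and
\[
\mu(a) = \nu(a|_{K^\Gamma}) = \int_{K^\Gamma} a|_{K^\Gamma}\, d\lambda = \int_K a\, d\lambda,
\]
as required.

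The only substantive point that is not quite automatic is verifying that the separability of $A$ transfers to $\rA(K^\Gamma)$ and that the zero-extension of $\lambda$ from $K^\Gamma$ to $K$ preserves admissibility and the prescribed support condition. The former follows because $\rA(K^\Gamma)$ is a quotient of $\rA(K)$ by the closed operator subsystem generated by $\{a - sa : a \in \rA(K),\ s \in \Gamma\}$ (with $\Gamma$ countable in the separable setting). The latter is a routine check: absolute continuity with respect to a scalar-valued measure is preserved by pushing forward along the closed inclusion $K^\Gamma \hookrightarrow K$, and the Baire/Borel structure on $K^\Gamma$ is the subspace structure inherited from $K$.
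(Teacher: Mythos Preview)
Your proposal is correct and follows essentially the same route as the paper: the paper simply states that the theorem is obtained by combining Theorem~\ref{thm:nc-ergodic-decomposition} with the integral representation theorem \cite{DK2019}*{Theorem 10.3.10} (cited as 10.3.11 in the introduction), without spelling out the intermediate steps. Your added remarks on the separability of $\rA(K^\Gamma)$ and on extending $\lambda$ from $K^\Gamma$ to $K$ are reasonable elaborations of details the paper leaves implicit; the parenthetical about $\Gamma$ being countable is unnecessary, since the separability of $\rA(K^\Gamma)$ follows from that of $\rA(K)\cong A$ via the restriction map regardless of the cardinality of $\Gamma$.
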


\section{Kazhdan's property (T)} \label{sec:kazhdan}

Glasner and Weiss \cite{GW1997}*{Theorem 1} proved an important dynamical characterization of property (T). Specifically, they proved that a locally compact second countable group $G$ has property (T) if and only if for every topological dynamical system $(X,G)$, the space $\rP(X)^G$ of invariant probability measures on $X$ is a Bauer simplex. In this section we will establish a noncommutative extension of their result for discrete groups.

For a discrete group $\Gamma$, a unitary representation $\rho : \Gamma \to \U(H)$ is said to have {\em almost invariant vectors} if there is a sequence of unit vectors $\{ \xi_i \in H \}$ such that $\lim \|\rho(s)\xi_i - \xi_i \| = 0$ for all $s \in \Gamma$. 

\begin{defn}[Kazhdan's property (T)]
A discrete group $\Gamma$ is said to have {\em Kazhdan's property (T)}, or briefly {\em property (T)}, if every unitary representation of $\Gamma$ with almost invariant vectors has a nonzero invariant vector.
\end{defn}

The next theorem is a noncommutative extension of \cite{GW1997}*{Theorem 1}.

\begin{thm} \label{thm:char-prop-T}
Let $(A,\Gamma)$ be a C*-dynamical system with nc state space $K$. If $\Gamma$ has Kazhdan's property (T), then the set $K^\Gamma$ of invariant nc states in $K$ is an nc Bauer simplex.
\end{thm}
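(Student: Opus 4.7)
By Theorem 10.12, $K^\Gamma$ is already an nc simplex, so the task reduces to proving that $\partial(K^\Gamma)$ is closed in $\Irr(K^\Gamma)$ with respect to the spectral topology. I would argue by contradiction. If this failed, then Proposition 8.5 applied to $K^\Gamma$ would produce a cardinal $n$, a net $\{z_i\} \subseteq (K^\Gamma)_n$ of points maximal in $K^\Gamma$, and a non-maximal point $z \in (K^\Gamma)_n$ with $\delta_{z_i}\to \delta_z$ in the point-strong topology on representations of $\rC(K^\Gamma)$. Restricting to $\rA(K^\Gamma)$ and using the surjective restriction map $A \cong \rA(K) \to \rA(K^\Gamma)$, this delivers the pointwise convergence $z_i(a) \to z(a)$ in $\cM_n$ for every $a \in A$.

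Next, I would unpack the Stinespring/covariant data. For each $i$, choose a minimal representation $(y_i,\alpha_i) \in K_{m_i}\times \cM_{m_i,n}$ of $\mu_{z_i}$ on $\rC(K)$; since $K$ is an nc Bauer simplex (being the nc state space of the C*-algebra $A$), the point $y_i$ is actually a $*$-representation $A \to \B(H_{m_i})$ with $z_i = \alpha_i^* y_i \alpha_i$. Let $\sigma_i : \Gamma \to \U(H_{m_i})$ be the unitary representation from Lemma 10.9. Make the analogous choices $(y,\alpha,\sigma)$ for $z$, with $y: A \to \B(H_m)$. By Proposition 10.16, the maximality of each $z_i$ in $K^\Gamma$ is equivalent to $\ran(\alpha_i) = H_{m_i}^{\sigma_i}$, while the non-maximality of $z$ in $K^\Gamma$ gives $\ran(\alpha) \subsetneq H_m^\sigma$. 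I would fix a unit vector $\eta \in H_m^\sigma \ominus \ran(\alpha)$.

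Now I would form the Hilbert-space ultraproduct $H_\omega := \prod_\omega H_{m_i}$ along an ultrafilter $\omega$ on the index set refining the net's tail filter. This carries an induced $*$-representation $y_\omega : A \to \B(H_\omega)$, an induced isometry $\alpha_\omega : H_n \to H_\omega$ given by $\xi \mapsto [\alpha_i \xi]_\omega$, and an induced unitary representation $\sigma_\omega : \Gamma \to \U(H_\omega)$ making $(y_\omega,\sigma_\omega)$ covariant. The pointwise convergence $z_i(a) \to z(a)$ ensures $\alpha_\omega^* y_\omega(a)\alpha_\omega = z(a)$ for every $a \in A$, so $(y_\omega,\alpha_\omega)$ is a (typically non-minimal) Stinespring representation of $z$. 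The universal property of the minimal Stinespring $(y,\alpha)$ yields an isometry $u : H_m \to H_\omega$ with $u y(a) = y_\omega(a) u$ and $u\alpha = \alpha_\omega$; since each $\alpha_\omega\xi = [\alpha_i\xi]_\omega$ is already $\sigma_\omega$-invariant (because $\ran(\alpha_i)\subseteq H_{m_i}^{\sigma_i}$), the uniqueness clause of Lemma 10.9 forces $u$ to intertwine $\sigma$ with $\sigma_\omega$ on its image. In particular $u\eta \in H_\omega$ is $\sigma_\omega$-invariant and orthogonal to $\ran(\alpha_\omega)$.

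To close the argument I would invoke the standard strong reformulation of Kazhdan's property (T): for every $\epsilon' > 0$ there exist a finite subset $Q \subseteq \Gamma$ and $\epsilon > 0$ such that any $(Q,\epsilon)$-invariant unit vector in any unitary representation of $\Gamma$ lies within $\epsilon'$ of the subspace of invariants (Bekka--de la Harpe--Valette). Writing $u\eta = [\eta_i]_\omega$ with $(\eta_i)$ bounded and asymptotically $\sigma_i$-invariant along $\omega$, this forces $\|\eta_i - p_i\eta_i\| \to 0$ along $\omega$, where $p_i$ is the orthogonal projection of $H_{m_i}$ onto $H_{m_i}^{\sigma_i} = \ran(\alpha_i)$. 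Writing $p_i\eta_i = \alpha_i \xi_i$ for bounded $\xi_i \in H_n$, the finite-dimensionality of $H_n$ lets me extract $\xi := \lim_\omega \xi_i \in H_n$, whence $u\eta = [\alpha_i\xi_i]_\omega = \alpha_\omega\xi = u\alpha\xi$, so that $\eta = \alpha\xi$, contradicting $\eta \perp \ran(\alpha)$. I expect the principal technical obstacle to be the ultraproduct bookkeeping, in particular verifying that the induced $\Gamma$-action on $H_\omega$ really matches the componentwise $\sigma_i$'s and is compatible with $\sigma$ via $u$, together with selecting the right quantitative form of property (T) to pass from almost-invariance to genuine closeness to the invariant subspace.
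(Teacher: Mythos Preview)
Your overall strategy matches the paper's: reduce to Proposition~\ref{prop:not_closed_bdry} applied to $K^\Gamma$, unpack the covariant Stinespring data via Lemma~\ref{lem:invariant-ucp-covariant-rep} and Proposition~\ref{prop:criterion-extreme-invariant}, and contradict property~(T). The execution differs in the endgame. The paper works directly: it approximates the stray invariant vector $\eta$ by $\pi(a)\beta\xi$, transfers this along the point-strong convergence to produce almost invariant unit vectors in $\ran(\beta_i)^\perp$, and applies the basic form of property~(T) to the direct sum $\bigoplus_i \sigma_i|_{\ran(\beta_i)^\perp}$, which has no invariant vectors. You instead pass to a Hilbert-space ultraproduct and invoke the quantitative ``close to invariants'' version of property~(T).

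Your final step, however, contains a genuine gap. The cardinal $n$ produced by Proposition~\ref{prop:not_closed_bdry} is explicitly \emph{infinite}: its proof amplifies both the limit point and the approximants to a common infinite level in order to convert weak containment into point-strong convergence. Hence $H_n$ is infinite-dimensional and you cannot extract a norm limit $\xi = \lim_\omega \xi_i$. Without norm convergence the chain $u\eta = [\alpha_i\xi_i]_\omega = \alpha_\omega\xi$ breaks: the element $[\alpha_i\xi_i]_\omega$ lives in the ultraproduct $\prod_\omega \ran(\alpha_i)$, which strictly contains $\alpha_\omega(H_n)$ when $\dim H_n = \infty$, so its orthogonality to $\alpha_\omega(H_n)$ yields no contradiction (indeed, $\xi_i \to 0$ weakly with $\|\xi_i\| \to 1$ is perfectly possible).

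Your argument can be repaired without abandoning the ultraproduct. Let $P = [\alpha_i\alpha_i^*]_\omega$ be the projection of $H_\omega$ onto $\prod_\omega \ran(\alpha_i)$ and $p = \alpha\alpha^*$ the projection of $H_m$ onto $\ran(\alpha)$. On the dense set $y(A)\alpha H_n$ one computes $Pu(y(a)\alpha\xi) = [\alpha_i z_i(a)\xi]_\omega$ and $up(y(a)\alpha\xi) = [\alpha_i z(a)\xi]_\omega$; the point-strong convergence $z_i(a)\xi \to z(a)\xi$ then gives $Pu = up$. Now your property-(T) step says $Pu\eta = u\eta$, while $up\eta = 0$ since $\eta \perp \ran(\alpha)$, and the contradiction follows. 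This replaces the illegitimate compactness step with the same point-strong transfer that drives the paper's direct construction.
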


\begin{proof}
Suppose that $\Gamma$ has property (T) and suppose for the sake of contradiction that $K^\Gamma$ is not an nc Bauer simplex. By Proposition \ref{prop:not_closed_bdry}, there is a net of maximal nc states $\{\mu_i \in K^\Gamma_m\}$ and a non-maximal nc state $\mu \in K^\Gamma_m$ such that $\lim \mu_i = \mu$ in the point-strong topology.

Let $(\pi_i,\beta_i) \in K_{n_i} \times \cM_{n_i,m}$ be a minimal representation for each $\mu_i$ and let $\sigma_i : \Gamma \to \U_{n_i}$ denote the corresponding unitary representation as in Lemma \ref{lem:invariant-ucp-covariant-rep}. Similarly, let $(\pi,\beta) \in K_n \times \cM_{n,m}$ be a minimal representation for $\mu$ and let $\sigma : \Gamma \to \U_n$ denote the corresponding unitary representation, as in Lemma \ref{lem:invariant-ucp-covariant-rep}.

By the discussion at the beginning of Section \ref{sec:nc-ergodic-theory}, each $(\pi_i,\beta_i,H_{n_i})$ is a minimal Stinespring representation for $\mu_i$ and $(\pi,\beta,H_n)$ is a minimal Stinespring representation for $\mu$. 

Since $\mu_i$ is maximal in $K^\Gamma$, Proposition \ref{prop:criterion-extreme-invariant} implies that the subspace of invariant vectors for $\sigma_i$ is precisely $\ran(\beta_i)$. Hence $\sigma_i \simeq 1_{\ran(\beta_i)} \oplus \sigma'_i$ for a unitary representation $\sigma'_i : \Gamma \to \ran(\beta_i)^\perp$ that has no nonzero invariant vectors, where the decomposition is taken with respect to $\ran(\beta_i)$.

Let $H = \oplus \ran(\beta_i)^\perp$ and consider the unitary representation $\rho : \Gamma \to \U(H)$ defined by $\rho = \oplus \sigma'_i$. Since each $\sigma_i'$ has no nonzero invariant vectors, $\rho$ has no nonzero invariant vectors. We will show that $\rho$ has almost invariant vectors. Since $\Gamma$ has property (T), this will imply that $\rho$ has an invariant unit vector, giving a contradiction. It suffices to show that for every $\epsilon > 0$ there is a unit vector $\omega \in H$ such that $\|\rho(s) \omega - \omega \| < \epsilon$ for all $s \in \Gamma$.

Fix $\epsilon > 0$ and choose $\epsilon_0 \in (0,1)$ such that $3\epsilon_0(1-\epsilon_0^2)^{-1/2} < \epsilon$.
Since $\mu$ is not maximal in $K^\Gamma$, there is a unit vector $\zeta \in \ran(\beta)^\perp$ that is invariant for $\sigma$. Also, since the representation $(\pi,\beta)$ is minimal, there is $\xi \in H_m$ and $a \in A$ such that $\|\pi(a) \beta \xi\| = 1$ and $\|\pi(a)\beta \xi - \zeta\| < \epsilon_0/2$. Then the invariance of $\zeta$ for $\sigma$ implies that for $s \in \Gamma$,
\begin{align*}
\|\sigma(s) \pi(a) \beta \xi &- \pi(a) \beta \xi \| \\
&\leq \|\sigma(s) \pi(a) \beta \xi - \sigma(s) \zeta \| + \|\sigma(s) \zeta - \pi(a) \beta \xi\| \\
&= 2\|\pi(a) \beta \xi - \zeta\| \\
&< \epsilon_0.
\end{align*}

Since $\lim \mu_i = \mu$ in the point-strong topology,
\[
\lim \|\pi_i(a) \beta_i \xi\|^2 = \lim \langle \mu_i(a^* a) \xi, \xi \rangle = \langle \mu(a^* a) \xi, \xi \rangle = \|\pi(a) \beta \xi\| = 1
\]
and
\begin{align*}
\lim &\| \beta_i^* \pi_i(a) \beta_i \xi \| = \lim \| \mu_i(a) \xi \| = \| \mu(a) \xi \| = \| \beta^* \pi(a) \beta \xi \| \\
&\quad = \| \beta^* \pi(a) \beta \xi - \beta^* \zeta \| \leq \|\pi(a) \beta \xi - \zeta \| < \epsilon_0.
\end{align*}
Hence $1 - \epsilon_0^2 < \lim \|(1-\beta_i \beta_i^*) \pi_i(a) \beta_i \xi \|^2 \leq 1$.
Similarly, it follows from the construction of the unitaries in Lemma~\ref{lem:invariant-ucp-covariant-rep} that for $s \in \Gamma$,
\begin{align*}
\lim \| \sigma_i(s) \pi_i(a) \beta_i \xi - \pi_i(a) \beta_i \xi \|^2 &= 2 - 2 \lim \operatorname{Re}\langle \sigma_i(s) \pi_i(a) \beta_i \xi, \pi_i(a) \beta_i \xi \rangle \\
&= 2 - 2 \lim \operatorname{Re}\langle \mu_i(a^* (s a)) \xi, \xi \rangle \\
&= 2 - 2 \operatorname{Re}\langle \mu(a^* (s a)) \xi, \xi \rangle \\
&= 2 - 2 \operatorname{Re}\langle \sigma(s) \pi(a) \beta \xi, \pi(a) \beta \xi \rangle \\
&< \epsilon_0^2.
\end{align*}

For each $i$, let
\[
\omega_i = \frac{1}{\|(1-\beta_i \beta_i^*) \pi_i(a) \beta_i \xi\|} (1-\beta_i \beta_i^*) \pi_i(a) \beta_i \xi \in \ran(\beta_i)^\perp.
\]
Then $\omega_i$ is a unit vector and it follows from the above inequalities that
\begin{align*}
\lim \|\sigma_i(s) \omega_i &- \omega_i\| \\
&\leq \lim \frac{1}{\|(1-\beta_i \beta_i^*) \pi_i(a) \beta_i \xi\|} \|\sigma_i(s) \pi_i(a) \beta_i \xi - \pi_i(a) \beta_i \xi \| \\
&\quad + \lim \frac{2}{\|(1-\beta_i \beta_i^*) \pi_i(a) \beta_i \xi\|} \|\beta_i^* \pi_i(a) \beta_i \xi\| \\
&< \frac{3 \epsilon_0}{\sqrt{1-\epsilon_0^2}} \\
&< \epsilon
\end{align*}
Therefore, identifying each $\omega_i$ with a vector in $H$, we conclude that for sufficiently large $i$, $\|\rho(s)\omega_i - \omega_i\| < \epsilon$.
\end{proof}

By Bauer's theorem, a compact convex set is a Bauer simplex if and only if it is affinely homeomorphic to the state space of a commutative C*-algebra. Hence the result of Glasner and Weiss, specialized to discrete groups, is equivalent to the assertion that a discrete group $\Gamma$ has property (T) if and only if for every commutative C*-dynamical system $(\rC(X),\Gamma)$ with state space $K_1$, the space $K_1^\Gamma$ of invariant states is the state space of a unital commutative C*-algebra. The following corollary is a noncommutative extension of this result.

\begin{cor}
A discrete group $\Gamma$ has Kazhdan's property (T) if and only if for every C*-dynamical system $(A,\Gamma)$ with state space $K_1$, the space $K_1^\Gamma$ of invariant states is affinely homeomorphic to the state space of a C*-algebra.
\end{cor}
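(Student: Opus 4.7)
The plan is to obtain both directions as essentially immediate consequences of two results already established: Theorem~\ref{thm:char-prop-T} (together with the characterization of nc Bauer simplices in Theorem~\ref{thm:characterization-nc-bauer-simplex-c-star}) for the forward direction, and the classical Glasner--Weiss theorem for the reverse direction.

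For the forward direction, suppose $\Gamma$ has property (T) and let $(A,\Gamma)$ be any C*-dynamical system with nc state space $K$. Theorem~\ref{thm:char-prop-T} gives that $K^\Gamma$ is an nc Bauer simplex, so Theorem~\ref{thm:characterization-nc-bauer-simplex-c-star} implies $\rA(K^\Gamma)$ is unitally completely order isomorphic to a unital C*-algebra $B$, namely $\cmin(\rA(K^\Gamma))$. By the duality of Theorem~\ref{thm:nc_kadison}, $K^\Gamma$ is affinely homeomorphic to the nc state space of $B$; restricting to the scalar level shows that $K_1^\Gamma$ is affinely homeomorphic to the state space of $B$.

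For the reverse direction, assume that for every C*-dynamical system $(A,\Gamma)$ the set of invariant states $K_1^\Gamma$ is affinely homeomorphic to the state space of a C*-algebra. To show $\Gamma$ has property (T), by the Glasner--Weiss theorem (specialized to discrete groups) it suffices to verify that for every topological dynamical system $(X,\Gamma)$ with $X$ compact Hausdorff, the space of invariant probability measures on $X$ is a Bauer simplex. Equivalently, we must show that for every commutative C*-dynamical system $(\rC(X),\Gamma)$, the invariant state space is affinely homeomorphic to the state space of a unital \emph{commutative} C*-algebra. By Feldman's theorem, $K_1^\Gamma$ is a Choquet simplex in the commutative case. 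By hypothesis, it is also the state space of some unital C*-algebra $B$. Since the state space of a unital C*-algebra is a simplex if and only if the algebra is commutative, $B$ must be commutative, so $K_1^\Gamma$ is a Bauer simplex.

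The only step that is not entirely formal is the observation that a C*-algebra whose state space is a simplex is necessarily commutative; this is a classical fact (the state space of a noncommutative C*-algebra contains a face affinely homeomorphic to the state space of $M_2(\bC)$, which is a 3-ball and not a simplex), so the main obstacle, invoking Glasner--Weiss for discrete groups, is really just a matter of reducing to the commutative case and matching the hypotheses.
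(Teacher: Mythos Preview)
Your proof is correct and follows essentially the same approach as the paper: the forward direction is identical, and for the converse both arguments reduce to the classical Glasner--Weiss result together with the fact that the state space of a noncommutative C*-algebra is never a simplex. The only cosmetic difference is that the paper argues by contrapositive, invoking \cite{GW1997}*{Theorem~2} directly (the Bernoulli shift yields a Poulsen simplex of invariant measures, which is a simplex but not Bauer), whereas you quote the full Glasner--Weiss equivalence and appeal to Feldman's theorem to see that the invariant state space is a simplex in the commutative case.
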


\begin{proof}
Let $K$ denote the nc state space of $K$. If $\Gamma$ has property (T), then Theorem \ref{thm:char-prop-T} implies that $K^\Gamma$ is an nc Bauer simplex. It follows from Theorem \ref{thm:characterization-nc-bauer-simplex-c-star} that $K_1$ is affinely homeomorphic to the state space of a C*-algebra. 

Conversely, if $\Gamma$ does not have property (T), then by \cite{GW1997}*{Theorem 2}, for the Bernoulli shift $\{\{0,1\}^\Gamma, \Gamma\}$, the space $\rP(\{0,1\}^\Gamma)^\Gamma$ of invariant probability measures is a Poulsen simplex. The result now follows from the fact that the state space of a unital commutative C*-algebra is a Bauer simplex and the fact that the state space of a unital noncommutative C*-algebra is never a simplex.
\end{proof}


\end{document}